\documentclass[11pt,a4paper,reqno]{amsart}

\usepackage{times}
\usepackage{xspace}
\usepackage[inline]{enumitem}
\usepackage{enumerate}
\usepackage[numbers,sort&compress,longnamesfirst]{natbib}
\usepackage[hmargin=2.0cm,vmargin=2.8cm]{geometry}
\usepackage[colorlinks = true, linkcolor = blue, citecolor = blue]{hyperref}
\usepackage{amsfonts, comment}
\usepackage{amssymb, amsmath, latexsym, mathtools}
\usepackage{amsthm}
\usepackage[capitalize]{cleveref}
\usepackage{color, graphicx} 
\usepackage{todonotes}
\usepackage{lipsum}  
\usepackage{bbm}

\newlist{myitems}{enumerate}{1}
\setlist[myitems]{label=\arabic*, font=\bfseries, resume}
\numberwithin{equation}{section}
\frenchspacing

\theoremstyle{plain}
\newtheorem{theorem}{Theorem}[section]
\newtheorem{lemma}[theorem]{Lemma}
\newtheorem{proposition}[theorem]{Proposition}

\theoremstyle{definition}
\newtheorem{definition}[theorem]{Definition}
\newtheorem{remark}[theorem]{Remark}

\newtheorem{example}[theorem]{Example}
\newtheorem{assumption}{Assumption}

\crefname{assumption}{Assumption}{Assumptions}
\Crefname{assumption}{Assumption}{Assumptions}
\newenvironment{namedassumption}[2][]
  {\begin{assumption}[#1]}
  {\end{assumption}}
  
\def\<{\langle}
\def\>{\rangle}
\def\e{{\varepsilon}}
\def\ud{\mathrm d}
\def\ue{\mathrm e}

\DeclareMathOperator*{\argmin}{arg\,min}

\newcommand{\ip}[1]{\left\langle#1\right\rangle}
\newcommand{\p}{\mathbb{P}}
\newcommand{\E}{\mathbb{E}}
\newcommand{\T}{\mathcal{T}}

\newcommand{\cL}{\mathcal{L}}

\newcommand{\cH}{\mathcal{H}}
\newcommand{\R}{\mathbb{R}}

\newcommand{\cV}{\mathcal{V}}

\newcommand{\D}{\mathcal{D}}

\newcommand{\norm}[1]{\left\|#1 \right\|} 

\newcommand{\abs}[1]{\left|#1\right|} 

\begin{document}

\title[Convergence of Deep Gradient Flow Methods for PDE]{Convergence of the generalization error for Deep Gradient Flow Methods for PDE\MakeLowercase{s}}

\author[C. Liu]{Chenguang Liu}
\author[A. Papapantoleon]{Antonis Papapantoleon}
\author[J. Rou]{Jasper Rou}

\address{Delft Institute of Applied Mathematics, EEMCS, TU Delft, 2628 Delft, The Netherlands.}
\email{C.Liu-13@tudelft.nl}

\address{Delft Institute of Applied Mathematics, EEMCS, TU Delft, 2628 Delft, The Netherlands \& Institute of Applied and Computational Mathematics, FORTH, 70013 Heraklion, Greece}
\email{a.papapantoleon@tudelft.nl}

\address{Delft Institute of Applied Mathematics, EEMCS, TU Delft, 2628 Delft, The Netherlands}
\email{J.G.Rou@tudelft.nl}

\keywords{Partial differential equations, deep learning, neural networks, gradient flows, generalization error, training error, approximation error, convergence}  

\subjclass[2020]{68T07, 65M12}

\begin{abstract}
The aim of this article is to provide a firm mathematical foundation for the application of deep gradient flow methods (DGFMs) for the solution of (high-dimensional) partial differential equations (PDEs).
We decompose the generalization error of DGFMs into an approximation and a training error. 
We first show that the solution of PDEs that satisfy reasonable and verifiable assumptions can be approximated by neural networks, thus the approximation error tends to zero as the number of neurons tends to infinity.
Then, we derive the gradient flow that the training process follows in the ``wide network limit'' and analyze the limit of this flow as the training time tends to infinity. These results combined show that the generalization error of DGFMs tends to zero as the number of neurons and the training time tend to infinity.
\end{abstract}

\maketitle


\section{Introduction}

Deep learning methods for the solution of high-dimensional partial differential equations (PDEs) have gained tremendous popularity in the last few years, since they can tackle equations in dimensions that were not attainable by classical methods, such as finite difference and finite element schemes.
This ability allows the modeling of more realistic phenomena across various fields of science and technology, including engineering, biology, economics, and finance.
The seminal articles of \citet{sirignano2018dgm} on the Deep Galerkin Method (DGM) and of \citet{Raissi2018} and \citet{PINN} on physics-informed neural networks (PINNs), building on the earlier work of \citet{Lagaris1998} and \citet{Lagaris2000}, incorporate the PDE residual and the initial and boundary conditions into the loss function of a neural network.
This loss function is then minimized by stochastic gradient descent (SGD) or one of its variants, \textit{e.g.} AdaGrad, Adam or RMSProp; see, for example \citet{Latz,Jin1,Jin2} and \citet{MR3732944,MR4119247} for more details on SGD.
These methods have laid the foundations for a variety of extensions and applications, including among many others, fractional differential equations~(\citet{Pang2019}), variational PINNs~(\citet{kharazmi2021variational}), Bayesian variants~(\citet{yang2021b,Jin3}) and mean-field games~(\citet{10572276}); see also \citet{latz1} and \citet{SIKORA2024102340} for a comparison of PINNs with finite element methods.

On the other hand, deep gradient flow methods (DGFMs), also known as deep Ritz methods, formulate the PDE as an energy minimization problem, where the (Dirichlet) energy is derived from the differential operator, which typically leads to a loss function that is easier to compute.
Moreover, they usually discretize the equation in time and train one network for each time step, instead of using a monolithic space-time discretization; see \textit{e.g.}, \citet{E_Yu_2017,Liao_Ming_2019,georgoulis2023discrete,park2023deep,papapantoleon2024time,Bruna_2024} for differential operators, and \citet{georgoulis2024deep} for an integro-differential operator.
A comprehensive review of deep learning methods for the solution of PDEs (and related BSDEs) appears in the forthcoming book of \citet{jentzen2025mathematicalintroductiondeeplearning}.

In the present article, we are interested in analyzing the error of deep gradient flow methods for the solution of PDEs.
Let us consider the PDE 
\begin{equation}
\label{PDE-gen}
    \begin{split}
        u_t + \mathcal Au & = 0, \quad (t,x)\in[0,T]\times D, \\
        u(0,x)              & = \Phi (x), \quad x\in\partial D,
    \end{split}
\end{equation}
where $\mathcal A$ is a differential operator, $\Phi$ determines the initial condition, $T$ is a (finite) time horizon, and $D\subseteq \mathbb R^d$ is the domain of the PDE.    
DGFMs translate the PDE into an energy minimization problem, which is then computed using stochastic gradient descent or one of its variants. 
The solution computed by DGFMs can be described in the following manner:
\begin{equation}
\label{PDE-MIN}
    u^\star_{\theta, n} = \argmin_{v \in \mathcal{C}^n_{\theta}} \int \ell (v(x)) \mathrm dx,
\end{equation}
where $\mathcal{C}^n_{\theta}$ denotes the space of neural networks with $n$ neurons where $\theta$ is the set of trainable parameters, while $\ell$ denotes the loss functional associated to the Dirichlet energy of the operator $\mathcal{A}$.

Let $u^\star$ denote the unique solution of~\eqref{PDE-gen}.
We would like to analyze and study the difference between the true solution of~\eqref{PDE-gen} and the solution computed by the deep gradient flow methods, \emph{i.e.} by the outcome of the minimization problem~\eqref{PDE-MIN}.
This difference is known as the \textit{generalization error} in the machine learning literature, \emph{i.e.} 
\[
    \mathcal E_\text{gen} = \| u^\star - u^\star_{\theta, n}\|.
\]
The generalization error $\mathcal E_\text{gen}$ can be decomposed in four separate components:
\begin{itemize}
    \item the \textit{quadrature error} $\mathcal E_\text{quad}$, which refers to how well the integral in~\eqref{PDE-MIN} is approximated by Monte Carlo simulations or another quadrature method;
    \item the \textit{time-stepping error} $\mathcal E_\text{step}$, which refers to how well the time-stepping scheme is approximating the true PDE;
    \item the \textit{approximation error} $\mathcal E_\text{approx}$, which refers to how well the neural network $v$ can approximate the continuous function $u$ that solves the PDE~\eqref{PDE-gen};
    \item the \textit{training error} $\mathcal E_\text{train}$, which refers to how well SGD approximates the true solution of the minimization problem~\eqref{PDE-MIN}.
\end{itemize}
Then, we have the error decomposition 
\begin{align}
    \label{eq:error-decomposition}
    \mathcal E_\text{gen} = \mathcal E_\text{quad} + \mathcal E_\text{step} + \mathcal E_\text{approx} + \mathcal E_\text{train} ,
\end{align}
and the aim of the present paper is to study these errors and show that, as the number of quadrature points, time-steps, neurons and training time tends to infinity, then the generalization error tends to zero, and the outcome of the deep gradient flow method indeed approximates the solution of the PDE.

There are several articles available that study the generalization error of deep learning methods for PDEs, typically focusing on the popular DGM and PINN methods. 
These methods rely on approximability properties of neural networks and properties of quadrature methods in order to control the generalization error, while they typically consider only \textit{a posteriori} estimates for the training error. 
We refer the interested reader to \citet{Mishra2022PINNs} and \citet{Gazoulis2025} for results on PINNs, and the related while more general article of \citet{loulakis2023newapproachgeneralisationerror}.
Moreover, several articles consider the approximation error of DMG and PINNs; see, for example, \citet{sirignano2018dgm,Shin_Darbon_Karniadakis} and \citet{abdo2026convergenceanalysispinnsfractional}.
The recent article of \citet{jiang2023global} considers the ``global'' convergence of DGM and PINNs, which amounts to the convergence of the training error in our notation. 
Combined with other available results, this article allows to deduce the convergence of the generalization error of the DGM and PINN methods.
Compared to the extensive literature on DGM and PINNs, there are significantly fewer papers on the convergence analysis of DGFMs; let us mention here the articles of \citet{Dondl2022} which focuses on the approximation error, and \citet{doi:10.1142/S021953052350015X} which provides a convergence rate using the Rademacher complexities.

The aim of the present article is to provide convergence results on the generalization error of DGFMs under reasonable and verifiable hypotheses on the underlying PDEs.
The first part of this work focuses on the analysis of the approximation error, \textit{i.e.} we show that there exists a neural network that approximates the solution of the PDE.
This result uses ideas from PDE theory, optimization and the calculus of variations, and a suitable variant of the universal approximation theorem for neural networks.
This part is inspired by the seminal paper of \citet{sirignano2018dgm}, although the methods are rather different.
The second part of this work focuses on the analysis of the training error, and we show that as the number of neurons tend to infinity and the training time also tends to infinity, then the outcome of the deep gradient flow method tends to the true solution of the PDE. 
This result is based on the careful analysis of the gradient flow associated with the loss function of DGFMs.
This part follows the work of \citet{jiang2023global}, utilizing the structure of DGFMs.
The quadrature error and the discretization error of the time-stepping scheme are the most well-understood errors in the error decomposition \eqref{eq:error-decomposition}, thus this work focuses on the other two errors.
The combination of these results, yields that the generalization error of DGFMs, under reasonable and verifiable assumptions, tends to zero.

This article is organized as follows: 
\cref{sec:DGFMs} provides an overview of deep gradient flow methods for the solution of PDEs.
\cref{sec:convergence-NN} studies the approximation error of DGFMs, using the variational formulation of PDEs and a tailored version of the universal approximation theorem. 
\cref{sec:training} studies the training error of DGFMs; we first derive a gradient flow that the training process satisfies in the ``wide network limit'' and then analyze the behavior of this flow as the training time tends to infinity.
Finally, the appendices contain auxiliary estimates and examples.


\subsection{Notation}
Let $\cH$ denote an arbitrary space, then $\|\cdot\|_\mathcal{H}$ denotes the norm, $\langle\cdot,\cdot\rangle_\mathcal{H}$ denotes the inner product, and  $w_m \xrightharpoonup{\,\,\, \mathcal H \,\,\,} w$ denotes the weak convergence on this space.
We abbreviate spaces and norms as $\cH = \cH(\R^d)$ and $\norm{f}_{\cH} = \norm{f}_{\cH (\R^d)}$.

Let $1 \leq p < \infty$ and denote by $L^p (\R^d)$ the space of functions with finite $p$-norm, where
\[
\norm{f}_{L^p} = \left( \int_{\R^d} \abs{f (x)}^p \ud x \right)^{\frac{1}{p}},
\]
while $L^p_{\text{loc}}$ denotes the space of functions in $L^p$ that are locally integrable.
Let $C_c^k \left( \mathbb{R}^d \right)$ denote the space of functions with compact support and continuous partial derivatives up to order $k$.
Moreover, let  $W_0^{k, p} (\R^d)$ denote the Sobolev space with norm
\[
\norm{f}_{W_0^{k, p}} = \left( \sum_{\abs{\alpha} \leq k} \int_{\R^d} \abs{D^\alpha f (x)}^p \ud x \right)^{\frac{1}{p}} < \infty,
\]
with $D^{\alpha} f$ the weak derivative of $f$ and $\alpha$ a multi-index.
Let us introduce the shorthand notation $\cH_0^k (\R^d) := W_0^{k, 2} (\R^d)$ for Sobolev spaces, and let $\cH^{-1}(\R^d)$ denote the dual space of $\cH_0^1(\R^d)$.

Finally, let $\mathcal V \subset \mathcal H \subset \mathcal V^*$ denote a Gelfand triple, in which $\mathcal{H}$ is a separable Hilbert space, $\mathcal{V}$ is a Banach space and $\mathcal{V}^*$ is the topological dual of $\mathcal{V}$.

\begin{definition}[Self-adjoint operator]
An operator $\cL:\mathcal{V} \to \mathcal{V}^*$ is self-adjoint if
\[
    \ip{\cL u,v}_{\cV^*,\cV} = \ip{\cL v,u}_{\cV^*,\cV} \quad \text{for all \ } u,v \in \mathcal{V}.
\]
\end{definition}

\begin{remark}
The inner product $\ip{\cL u,v}_{\cV^*,\cV}$ means that $\cL u$ acts on $v$ as a functional. 
An important example is the following: $\mathcal{V}=\cH_0^1(\R^d)$, $\mathcal{V}^*=\cH^{-1}(\R^d)$, $\mathcal H = L^2(\R^d)$, and $\cL=-\Delta$, where $\Delta$ denotes the Laplace operator. 
Then, we define the functional $\cL u$ as follows 
\begin{align*}
    \ip{\cL u,v}_{\cH^{-1},\cH_0^1}=\ip{-\Delta u,v}_{\cH^{-1},\cH_0^1}:= \ip{\nabla u,\nabla v}_{L^2}.
\end{align*}
\end{remark}

%
%


\section{Deep gradient flow methods for PDEs}
\label{sec:DGFMs}

Let us start by providing an overview of deep gradient flow methods (DGFMs) for the solution of PDEs. 
These methods have gained increased popularity in the literature because they can efficiently handle high-dimensional PDEs stemming from physics, engineering, and finance; see \textit{e.g.} \citet{E_Yu_2017}, \citet{Liao_Ming_2019}, \citet{georgoulis2023discrete}, \citet{park2023deep} and \citet{papapantoleon2024time} for differential operators, and \citet{georgoulis2024deep} for an integro-differential operator.
Deep gradient flow methods reformulate the PDE as an energy minimization problem, which is then approximated in a time-stepping fashion by deep artificial neural networks. 
This method results in a loss function that is tailor-made to the PDE at hand, avoids the use of a second derivative, which is computationally costly, and reduces the training time compared to, for instance, the DGM of \citet{sirignano2018dgm}; see \textit{e.g.} \citet[Sec.~5]{georgoulis2023discrete}. 

Let $u \left( t, x \right) : \left[ 0, T \right] \times \mathbb{R}^{d} \to \mathbb{R}$ be the solution of the following partial (integro-)differential equation:
\begin{equation}
\label{eq:general_operator}
\begin{aligned}
u_t + \mathcal{A} u & = 0, \quad u(0) = u_0, 
\end{aligned}
\end{equation}
where $\mathcal{A}$ is an operator from $\mathcal V$ to $\mathcal V^*$ and $u_0\in\mathcal H$ is the initial condition.  
In order to write the PDE as an energy minimization problem, we need to split the operator in a symmetric and an (asymmetric) remainder part, \emph{i.e.}
\begin{equation}
\label{eq:operator}
\begin{aligned}
\mathcal{A} u & = \mathcal{L} u + F(u), 
\end{aligned}
\end{equation}
where $\mathcal{L}$ is a self-adjoint, linear operator and $F$ is a (linear) operator from $\mathcal{V}$ to $\mathcal V^*$. 
This PDE is then discretized using, for example, the backward Euler differentiation scheme, which yields
\[
\frac{U^k - U^{k - 1}}{h} + \mathcal{L} U^k + F \Big( U^{k - 1} \Big) = 0, \quad U^0  = u_0,
\]
where $U^k$ denotes the approximation to the solution of the PDE $u(t_k)$ at time step $t_k$, on an appropriate grid.
The variational formulation of this equation yields an energy functional $I^k (v)$ such that $U^k$ is a critical point of $I^k$, where
\[
I^k (v) = \frac{1}{2} \norm{v - U^{k - 1}}_{\mathcal{H}}^2 + \frac{h}{2} \ip{\cL v, v}_{\mathcal{V}^*,\mathcal{V}}   + h\ip{F \left( U^{k - 1} \right),v}_{\mathcal H}.
\]
The function $v$ is approximated by artificial neural networks which are trained using the stochastic gradient descent (SGD) algorithm, or one of its variants, while the functional $I^k$ provides a loss function for the SGD iterations which is tailor-made for this problem.
The aim of this paper is to show that this procedure converges to the true solution $u^\star$ of the PDE \eqref{eq:general_operator}.

Let us mention that the convergence results hold for certain non-linear operators $F$, assuming that the PDE is well-posed.
Next, we present examples of PDEs that have been treated by DGFMs, and their applications.

\begin{example}[Heat equation]
The simplest example that fits this framework is the celebrated heat equation, which reads
\[
    u_t = \kappa \Delta u,\quad \kappa>0,
\]
subject to an initial condition.
Then $\mathcal{A} = \cL = - \kappa \Delta$ and $F(u) = 0$.
\end{example}

\begin{example}
\citet{georgoulis2023discrete} consider dissipative evolution PDEs of the following form
\[
    u_t - \nabla \cdot (A \nabla u) = F,
\]
subject to appropriate initial and terminal conditions, where $A$ is a symmetric, uniformly positive definite and bounded diffusion tensor and $F$ is a suitable function.
Then, we have that $\mathcal{A} = \mathcal{L} = - \nabla \cdot (A \nabla u)$.
\end{example}

\begin{example}[Option pricing PDEs]
\label{ex:option}
PDEs arising in the valuation of financial derivatives fit naturally in this setting. 
In the \citet{scholes1973pricing} model, for example, we have directly that
\[
    \mathcal{L} u = - \frac{\sigma^2}{2} \Delta u + r u \quad \text{ and } \quad F(u) = \Big( \frac{\sigma^2}{2} - r \Big) \nabla u.
\]
Here $r$ and $\sigma$ are positive parameters that denote the risk-free interest rate and the asset volatility respectively.

More general and more realistic diffusion models also fit in this framework.
Let us consider the \citet{heston1993closed} model as an example, where $S$ denotes the asset price process and $V$ the variance process. 
The option pricing PDE in this model takes the form \eqref{eq:operator} with 
\begin{equation}\label{eq:PR}
\mathcal{L} u = - \nabla \cdot (A \nabla u ) + ru \quad \text{ and } \quad F(u) = \mathbf{b} \cdot \nabla u,
\end{equation}
where
\[
A = \frac{V}{2} \begin{bmatrix} S^2 & \eta \rho S \\ \eta \rho S & \eta^2 \end{bmatrix} 
    \quad \text{ and } \quad
\mathbf{b} = \begin{bmatrix} (V - r + \frac{1}{2} \rho \eta) S \\ \kappa (V - \theta) + \frac{1}{2} \eta \rho V + \frac{\eta^2}{2} \end{bmatrix}.
\]
Here, $\eta$ denotes the volatility of the volatility, $\rho$ the correlation between the Brownian motions driving the asset price and the variance process, $\theta$ the long term variance and $\kappa$ the reversion rate of the variance to $\theta$.

\end{example}

\begin{example}[Option pricing PIDEs]
\label{ex:op-PIDE}
Certain classes of partial integro-differential equations (PIDEs) arising in the pricing of financial derivatives can also be casted in this framework, in particular when the integro-differential operator is not ``symmetrized''.
Let us consider, for example, the multi-dimensional Merton model as described in \citet{georgoulis2024deep}.
Then, the PIDE arising for the pricing of basket options can be described using \eqref{eq:PR}, where the operator $\mathcal{L}$ retains the same structure, while the function $F$ takes now the form
\[
    F(u) = \mathbf{b} \cdot \nabla u - \lambda \int_{\R^d} \big( u \left( x \ue^z \right) - u (x) \big) \nu(\ud z),
\]
where $\nu$ denotes the multivariate normal density function.
\end{example}

\begin{example}[Allen--Cahn equation]
\citet{park2023deep} consider the example of the two-dimensional Allen--Cahn equation:
\[
\begin{aligned}
u_t  = \Delta u - \epsilon^{-2} W'(u), 
\end{aligned}
\]
with appropriate initial and boundary conditions, where $W$ is a double well potential; for instance, $W(u) = \frac{(u^2 -1 )^2}{4}$. 
Then $\mathcal{L}u = - \Delta u + \epsilon^{-2} W'(u)$ and $F(u) = 0$.
\end{example}

\section{Convergence of the approximation error}
\label{sec:convergence-NN}

In this section, we show that the approximation error of the deep gradient flow method converges to zero, \emph{i.e.} we consider a neural network with a single layer and prove that as the number of nodes in the network tends to infinity, there exists a neural network that converges to the solution of the PDE. This proof consists of several steps. 
First, we show that the problem is well-posed in \cref{sec:well}. 
Second, we prove convergence of the time-stepping scheme in \cref{sec:time}. 
Third, we prove the equivalence between the discretized PDE and the minimization of the variational formulation in \cref{sec:weak}. 
Fourth, we prove a version of the universal approximation theorem (UAT) in \cref{sec:neural}. 
Finally, in \cref{sec:convergence}, we deduce the convergence of the neural network approximation to the solution of the minimization problem by utilizing the UAT. 

In the sequel, we consider the following Gelfand triple: $\mathcal{V}=\cH_0^1(\R^d)$, $\mathcal{V}^*=\cH^{-1}(\R^d)$ and $\mathcal{H}=L^2(\R^d)$.
Let us consider the PDE \eqref{eq:general_operator}--\eqref{eq:operator} and assume that the operators $\cL$ and $F$ satisfy the following conditions.

\begin{namedassumption}{(CON)}
\label{ass:bound} 
Assume that the operators $\cL$ and $F$ satisfy the following inequalities, for any $u,v\in \cH_0^1(\R^d)$,
\begin{align*}
\abs{\ip{\cL  u,  v}_{\cH^{-1},\cH_0^1}} \le M \norm{u}_{\cH_0^1 } \norm{v}_{\cH_0^1 }
    \quad \text{and} \quad
\norm{F(u)}_{L^2}\le M \norm{u}_{\cH_0^1 },
\end{align*}
where $M>0$ is a constant.
\end{namedassumption}

\begin{namedassumption}{(G\AA)}
\label{ass:ellip}
The operator $\cL$ satisfies the G\aa rding inequality, \emph{i.e.} there exist constants $\lambda_1>0,\lambda_2\ge 0$ such that, for any $u\in \cH_0^1(\R^d),$ holds
\begin{align*}
\ip{\cL  u,  u}_{\cH^{-1},\cH_0^1} \ge  \lambda_1\norm{u}_{\cH_0^1}^2 - \lambda_2 \norm{u}^2_{L^2}.
\end{align*}
\end{namedassumption}

\begin{namedassumption}{(SA)}
\label{ass:SA-PD}
The operator $\mathcal L$ is self-adjoint and positive definite.
\end{namedassumption}

\begin{namedassumption}{(LIP)}
\label{ass:Lip}
The operator $F$ satisfies an estimate of the form 
\[
\norm{F(v) - F(w)}_{\mathcal{H}^{-1}} \le \lambda \norm{v - w}_{\mathcal{H}^1_0} + \mu \norm{v - w}_{L^2}, 
\]
for all $v,w \in \left\{ v \in \mathcal{H}_0^1: \min_x \norm{u(x) - v}_{\mathcal{H}^1_0} \leq 1 \right\}$, where $\lambda<1$ and $\mu\in\R$.
\end{namedassumption}

\begin{remark}
The examples of PDEs considered in the previous section typically satisfy these assumptions. 
More details, focusing on the option pricing PDEs of \cref{ex:option,ex:op-PIDE}, are deferred to \cref{appendix:examples}.
\end{remark}


\subsection{Well-posedness}
\label{sec:well}

Let us first discuss the existence and uniqueness of solutions for equation \eqref{eq:general_operator}.

\begin{theorem}[Well-posedness]
Assume that the operators $\cL$ and $F$ satisfy \cref{ass:bound,ass:ellip}, then equation \eqref{eq:general_operator} admits a unique weak solution $u \in L^2 \left( \left( 0, T \right) ; \cH_0^1 (\R^d) \right) \cap \cH^1 \left( \left( 0, T \right) ; \cH^{-1} (\R^d) \right)$, that satisfies
\begin{align*}
\frac{\ud}{\ud t}\ip{u, v}_{L^2} + \ip{\cL  u,  v}_{\cH^{-1},\cH_0^1}  + \ip{F(u), v}_{L^2} = 0
\end{align*}
for any $v \in \cH_0^1 (\R^d)$ and $u \left( 0 \right) = u_0$. 
\end{theorem}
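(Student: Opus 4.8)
The plan is to read this as the classical well-posedness result for a linear parabolic equation posed in the Gelfand triple $\cH_0^1(\R^d) \subset L^2(\R^d) \subset \cH^{-1}(\R^d)$, with spatial part $\cL + F$, and to establish it by the Faedo--Galerkin method (cf.\ the abstract theory of Lions--Magenes, or Evans~\S7.1). The two ingredients the method needs are supplied by the standing assumptions. From \cref{ass:bound}, the map $v \mapsto \ip{\cL u, v}_{\cH^{-1},\cH_0^1} + \ip{F(u), v}_{L^2}$ is bounded on $\cH_0^1(\R^d)$ with norm at most $2M\norm{u}_{\cH_0^1}$, since $\abs{\ip{F(u),v}_{L^2}} \le \norm{F(u)}_{L^2}\norm{v}_{L^2} \le M\norm{u}_{\cH_0^1}\norm{v}_{\cH_0^1}$. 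From \cref{ass:ellip} and the same bound on $F$, Young's inequality upgrades this to a G\aa rding-type estimate: for every $u\in\cH_0^1(\R^d)$,
\[
\ip{\cL u, u}_{\cH^{-1},\cH_0^1} + \ip{F(u), u}_{L^2} \;\ge\; \tfrac{\lambda_1}{2}\norm{u}_{\cH_0^1}^2 - \Big(\lambda_2 + \tfrac{M^2}{2\lambda_1}\Big)\norm{u}_{L^2}^2 .
\]

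For existence, pick $(w_k)_{k\ge1} \subset \cH_0^1(\R^d)$ orthonormal in $L^2(\R^d)$ and total in $\cH_0^1(\R^d)$, and for each $m$ solve (by standard ODE theory, using continuity of $F$, automatic in the linear case) the finite-dimensional system for $u_m(t) = \sum_{k\le m} d_m^k(t)\,w_k$ obtained by testing \eqref{eq:general_operator} against $w_1,\dots,w_m$, with $u_m(0)$ the $L^2$-projection of $u_0$. Testing the system against $u_m$ and invoking the displayed G\aa rding estimate gives, via Gr\"onwall, a bound on $u_m$ in $L^\infty\big(0,T;L^2(\R^d)\big) \cap L^2\big(0,T;\cH_0^1(\R^d)\big)$ depending only on $\norm{u_0}_{L^2}, M, \lambda_1, \lambda_2, T$; testing against arbitrary $v\in\cH_0^1(\R^d)$ with $\norm{v}_{\cH_0^1}\le1$ (after projecting onto the Galerkin space, as usual) and using the boundedness estimate bounds $u_m'$ in $L^2\big(0,T;\cH^{-1}(\R^d)\big)$. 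Weak-$\ast$ compactness then yields a subsequential limit $u \in L^2(0,T;\cH_0^1) \cap \cH^1(0,T;\cH^{-1})$; passing to the limit in the (linear) Galerkin equations and using the density of $\mathrm{span}(w_k)$ produces the weak formulation for every $v\in\cH_0^1(\R^d)$, while the embedding $L^2(0,T;\cH_0^1)\cap\cH^1(0,T;\cH^{-1}) \hookrightarrow C([0,T];L^2(\R^d))$ lets one identify $u(0)=u_0$ after an integration by parts in time.

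For uniqueness, subtract two solutions $u,\tilde u$, set $w := u - \tilde u$, test the difference equation with $w$, and use $\frac{\ud}{\ud t}\norm{w}_{L^2}^2 = 2\ip{w',w}_{\cH^{-1},\cH_0^1}$ together with the same G\aa rding-plus-Young estimate applied to $w$, to obtain $\frac{\ud}{\ud t}\norm{w}_{L^2}^2 \le C\norm{w}_{L^2}^2$ with $w(0)=0$, whence $w\equiv0$. The one genuinely delicate point — and the main obstacle — is the term $F$: \cref{ass:bound} controls only $\norm{F(u)}_{L^2}$, which suffices for the energy estimates but not by itself for passing to the limit in $F(u_m)$ (that would need strong $L^2(0,T;L^2)$ convergence, and on $\R^d$ the embedding $\cH_0^1 \hookrightarrow L^2$ is not compact, so Aubin--Lions does not apply directly) nor for the uniqueness step, where one implicitly uses $F(u)-F(\tilde u)=F(u-\tilde u)$. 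Both steps go through because in the PDEs of interest $F$ is linear — e.g.\ $F(u)=\mathbf{b}\cdot\nabla u$ in \cref{ex:option} — so weak convergence is enough and $\norm{F(u)-F(\tilde u)}_{L^2} \le M\norm{u-\tilde u}_{\cH_0^1}$; a genuinely nonlinear $F$ would additionally call on the Lipschitz-type control of \cref{ass:Lip} and a localization/weighted-space compactness argument.
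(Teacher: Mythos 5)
Your two preparatory estimates are exactly what the paper checks, with the same constants: boundedness of the form $v\mapsto\ip{\cL u,v}_{\cH^{-1},\cH_0^1}+\ip{F(u),v}_{L^2}$ by $2M\norm{u}_{\cH_0^1}$, and the G\aa rding-type lower bound $\tfrac{\lambda_1}{2}\norm{u}_{\cH_0^1}^2 - \big(\lambda_2+\tfrac{M^2}{2\lambda_1}\big)\norm{u}_{L^2}^2$. The difference is what comes next: the paper simply invokes \citet[Theorem~3.2.2]{hilber2013computational}, which packages the Faedo--Galerkin construction, whereas you reconstruct that machinery from scratch (Galerkin system, a priori bounds in $L^\infty L^2\cap L^2\cH_0^1$ and on $u'$ in $L^2\cH^{-1}$, weak compactness, identification of the limit and of $u(0)$, uniqueness by G\"onwall). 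So the two proofs are mathematically equivalent; yours is a self-contained expansion of the reference the paper leans on. The point you flag at the end --- that \cref{ass:bound} alone does not let you pass to the limit in $F(u_m)$, nor subtract $F(u)-F(\tilde u)$ in the uniqueness step, unless $F$ is linear --- is genuine and also latent in the paper: calling $\ip{\mathcal{A}u,v}$ a \emph{bilinear} form already presupposes that $F$ is linear (which holds in \cref{ex:option,ex:op-PIDE}, but is not formally implied by \cref{ass:bound}). You make that hypothesis visible and note that a nonlinear $F$ would require \cref{ass:Lip} plus additional compactness or monotonicity on $\R^d$; the paper does not flag this, so your version is, if anything, more careful on this point.
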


\begin{proof}
According to \citet[Theorem 3.2.2]{hilber2013computational}, we only need to verify that the bilinear form $\ip{\mathcal{A} u, v}_{\cH^{-1},\cH_0^1}$ is continuous and satisfies the “G\aa rding inequality”, where 
\[
\ip{\mathcal{A} u, v}_{\cH^{-1},\cH_0^1}= \ip{\cL u, v}_{\cH^{-1},\cH_0^1} + \ip{F(u), v}_{L^2}.
\]
The continuity follows directly from \cref{ass:bound} and the Cauchy--Schwarz inequality, since
\begin{align*}
\abs{\ip{\mathcal{A} u, v}_{\cH^{-1},\cH_0^1}}
    & \le \abs{\ip{\cL u,v}_{\cH^{-1},\cH_0^1}}  + \abs{\ip{F(u), v}_{L^2}} \\ 
    & \le M \left[ \norm{u}_{\cH_0^1} \norm{ v}_{\cH_0^1} + \norm{u}_{\cH_0^1} \norm{v}_{L^2} \right] 
      \le 2M \norm{u}_{\cH_0^1} \norm{v}_{\cH_0^1}.
\end{align*}
Let us also verify that the bilinear form satisfies the G\aa rding inequality, \emph{i.e.} that there exist $C_1, C_2 > 0$, such that
\[
\abs{\ip{\mathcal{A} u, u}_{\cH^{-1},\cH_0^1}} \ge C_1 \norm{u}_{\cH_0^1}^2 - C_2 \norm{u}_{L^2}^2.
\]
We have that 
\begin{align*}
\abs{\ip{\mathcal{A} u, u}_{\cH^{-1},\cH_0^1}}
    & \ge \abs{\ip{\mathcal{L} u, u}_{\cH^{-1},\cH_0^1}} - \abs{\ip{F(u), u}_{L^2}} \\ 
    & \ge \lambda_1 \norm{u}_{\cH_0^1}^2 - \lambda_2 \norm{u}^2_{L^2} - M \norm{u}_{\cH_0^1} \norm{u}_{L^2}  \\ 
    & \ge \lambda_1 \norm{u}_{\cH_0^1}^2 - \lambda_2 \norm{u}^2_{L^2} - M \left(  \frac{\lambda_1}{2M} \norm{u}_{\cH_0^1}^2 + \frac{M}{2 \lambda_1} \norm{u}_{L^2}^2 \right) \\ 
    & = \frac{\lambda_1}{2} \norm{u}_{\cH_0^1}^2 - \left( \lambda_2 + \frac{M^2}{2 \lambda_1} \right) \norm{u}^2_{L^2},
\end{align*}
where have used \cref{ass:bound,ass:ellip} and the Cauchy--Schwarz inequality for the second step, and the Young inequality with $\varepsilon = \frac{\lambda_1}{M}$ for the third step.
\end{proof}


\subsection{Time stepping}
\label{sec:time}

The second step is to discretize the PDE in time and prove that this discretization converges to the true solution as the time step tends to zero.
Consider the PDE in formulation \eqref{eq:general_operator}--\eqref{eq:operator}, \textit{\emph{i.e.}}
\[
\begin{aligned}
u_t + \mathcal{L} u + F (u) = 0, \quad u(0) = u_0.
\end{aligned}
\]
Let us divide $[0,T]$ in $K$ intervals $(t_{k - 1}, t_k]$ with step size $h = t_k - t_{k - 1} = \frac{1}{K}$. 
Let $U^k$ denote the approximation of $u (t_k)$ using the backward Euler discretization scheme, \emph{i.e.}
\begin{equation}
\label{eq:discretization}
\frac{U^k - U^{k - 1}}{h} + \mathcal{L} U^k + F \left( U^{k - 1} \right) = 0, \quad U^0 = u_0.
\end{equation}

\begin{theorem}
\label{thm:time-stepping}
Assume that the operators $\mathcal{L}$ and $F$ satisfy \cref{ass:bound,ass:ellip,ass:SA-PD,ass:Lip}.
Then, there exists a constant $C$ independent of $h$ and $k$ such that, for $h$ sufficiently small, holds
\[
 \max_{0 \leq k \leq K} \norm{u(t_k) - U^k}_{L^2} \leq C h.
\]
\end{theorem}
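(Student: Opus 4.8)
The plan is to run the classical energy argument for the backward Euler scheme: a consistency estimate for the truncation error, a discrete stability estimate obtained by testing the error equation against the error, and a discrete Grönwall inequality. Set $e^k := u(t_k) - U^k$, so that $e^0 = 0$. Inserting the exact solution into \eqref{eq:discretization} and using $u_t(t_k) = -\mathcal L u(t_k) - F(u(t_k))$ yields the error equation
\[
\frac{e^k - e^{k-1}}{h} + \mathcal L e^k + \big( F(u(t_{k-1})) - F(U^{k-1}) \big) = \tau^k,
\]
with $\tau^k = \big( \tfrac{u(t_k) - u(t_{k-1})}{h} - u_t(t_k) \big) + \big( F(u(t_{k-1})) - F(u(t_k)) \big)$. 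Writing the first bracket via the integral form of Taylor's theorem and bounding the second with \cref{ass:Lip}, together with Cauchy--Schwarz in time, I would show $h \sum_{k=1}^{K} \norm{\tau^k}_{\cH^{-1}}^2 \le C h^2$; this needs the extra regularity $u_t \in L^2((0,T);\cH_0^1)$ and $u_{tt} \in L^2((0,T);\cH^{-1})$, which follows from parabolic regularity theory given \cref{ass:SA-PD} and sufficiently smooth data, or may be taken as a standing hypothesis.

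Next, test the error equation with $e^k$ in the $L^2$/duality pairing. The identity $\ip{a-b,a}_{L^2} \ge \tfrac12(\norm{a}_{L^2}^2 - \norm{b}_{L^2}^2)$ handles the difference quotient, \cref{ass:ellip} gives $\ip{\mathcal L e^k, e^k}_{\cH^{-1},\cH_0^1} \ge \lambda_1 \norm{e^k}_{\cH_0^1}^2 - \lambda_2 \norm{e^k}_{L^2}^2$, and the remaining two terms are estimated by $\norm{F(u(t_{k-1})) - F(U^{k-1})}_{\cH^{-1}} \norm{e^k}_{\cH_0^1}$ and $\norm{\tau^k}_{\cH^{-1}} \norm{e^k}_{\cH_0^1}$. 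Summing over $k$, applying \cref{ass:Lip} to the $F$-difference, and using Young's inequality, the $\cH_0^1$-contributions on the right are absorbed into $\lambda_1 h \sum_k \norm{e^k}_{\cH_0^1}^2$ — and this absorption is exactly where the condition $\lambda < 1$ is used (more precisely, $\lambda$ below the G\aa rding constant, which is $1$ for the Laplacian-type operators in the examples). What remains is an inequality of the form $\norm{e^k}_{L^2}^2 + c\, h \sum_{j \le k} \norm{e^j}_{\cH_0^1}^2 \le C h^2 + C' h \sum_{j \le k} \norm{e^j}_{L^2}^2$; absorbing the $j = k$ term on the right for $h$ small and invoking the discrete Grönwall lemma gives $\max_{0 \le k \le K} \norm{e^k}_{L^2}^2 \le C h^2$, which is the assertion.

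Two technical points need attention, and I expect the first to be the main obstacle. Since \cref{ass:Lip} is only local — valid for functions within $\cH_0^1$-distance $1$ of the solution — the argument must be phrased as an induction (or continuation) in $k$: assuming $U^0, \dots, U^{k-1}$ lie in this neighbourhood, the energy estimate produces $\norm{e^j}_{\cH_0^1}^2 \le C h$ for $j \le k$, hence $\norm{e^j}_{\cH_0^1} < 1$ for $h$ small, so $U^k$ again lies strictly inside the neighbourhood and the induction closes; well-posedness of each step, $U^k = (\tfrac1h I + \mathcal L)^{-1}(\tfrac1h U^{k-1} - F(U^{k-1}))$, follows from Lax--Milgram via \cref{ass:bound,ass:ellip} once $h$ is small. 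The delicate part is making the $F$-term simultaneously (i) \emph{absorbed} rather than fed into the Grönwall loop, which forces the quantitative condition $\lambda < 1$, and (ii) controlled inside the local Lipschitz neighbourhood, which forces the bootstrap; the regularity required for the truncation-error bound is the secondary, more routine issue.
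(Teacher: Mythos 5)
Your proof is correct in outline, but the paper takes a very different presentational route: it cites Theorem~2.1 of Akrivis, Crouzeix and Makridakis (1999), which treats exactly this class of linearly implicit schemes for nonlinear parabolic problems in the Gelfand-triple framework, and then asserts (without details, calling them ``tedious'') that the hypotheses of that theorem are verified by \cref{ass:bound,ass:ellip,ass:SA-PD,ass:Lip} with $\lambda<1$ and $q=1$. What you have written is, in effect, an unpacking of what that cited theorem proves internally: the consistency estimate on $\tau^k$, the energy (stability) estimate obtained by testing the error equation against $e^k$, absorption of the $F$-contribution into the G\aa rding coercivity using $\lambda<1$, and a discrete Gr\"onwall argument, together with the bootstrap needed because \cref{ass:Lip} is only local. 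Your version is more informative than the paper's: it makes explicit both the mechanism by which $\lambda<1$ enters (it is precisely the room needed to absorb the $\cH_0^1$-part of the $F$-increment into the $\lambda_1 h \sum_k \|e^k\|_{\cH_0^1}^2$ term) and the role of the continuation/induction in $k$ that keeps the iterates inside the Lipschitz neighbourhood. You also correctly flag the additional temporal regularity of $u$ ($u_t \in L^2(0,T;\cH_0^1)$, $u_{tt} \in L^2(0,T;\cH^{-1})$, or similar) needed for the $O(h)$ truncation bound; the paper's theorem statement does not name this hypothesis, and it is in fact buried in the assumptions of the Akrivis--Crouzeix--Makridakis theorem being invoked. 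One small point worth making explicit if you write this up in full: from $h\sum_{j\le k}\|e^j\|_{\cH_0^1}^2\le Ch^2$ you obtain only $\|e^k\|_{\cH_0^1}\le C h^{1/2}$, which is enough to close the bootstrap for $h$ small (distance $<1$), and the constant must be uniform in $k$ — this uniformity is delivered by the Gr\"onwall step, so the induction and the energy estimate have to be run together, not sequentially.
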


\begin{proof}
The proof follows directly from Theorem 2.1 in \citet{Akrivis_Crouzeix_Makridakis_1999}.
Indeed, using that $U^0 = u \left( 0 \right)$, we can show by direct, but tedious, calculations that the assumptions of \cite[p.~523]{Akrivis_Crouzeix_Makridakis_1999} are satisfied for $\lambda<1$ and $q=1$.
\end{proof}

\subsection{Weak formulation and uniqueness of minimizer}
\label{sec:weak}


The third step is to reformulate equation \eqref{eq:discretization} as a variational problem and prove that its solution is equivalent to the minimization of an energy functional. 
Let us first rewrite \eqref{eq:discretization} as follows
\begin{equation}
\label{eq:discretization2}
\left( U^k - U^{k - 1} \right) + h \left( \mathcal{L} U^k + F \left( U^{k - 1} \right) \right) = 0, \quad U^0  = u_0.
\end{equation}
We want to find an energy functional $I^k (u)$ such that $U^k$ is a critical point of $I^{k}$. 
Consider the following functional $I^k$ on $\mathcal{H}_0^1 (\R^d)$
\begin{align}
\label{eq:energy_functional}
I^k (u) 
    & = \frac{1}{2} \norm{u - U^{k - 1}}^2_{L^2} + \frac{h}{2} \ip{\cL  u,  u}_{\cH^{-1},\cH_0^1}   + h\ip{F \left( U^{k - 1} \right),u}_{L^2} \\ 
    & =: \mathcal{M}^k (u) + \mathcal{G}^k (u), \nonumber\\
\intertext{where} 
\mathcal{M}^k (u) & = \frac{1}{2} \norm{u}^2_{L^2} + \frac{h}{2} \ip{\cL  u,  u}_{\cH^{-1},\cH_0^1} \nonumber \\ 
\intertext{and} 
\mathcal{G}^k (u) & = - \ip{u, U^{k - 1}}_{L^2} + \frac{1}{2} \norm{U^{k - 1}}^2_{L^2} + h \ip{F \left( U^{k - 1} \right),u}_{L^2}. \nonumber
\end{align}
Here, $\mathcal{G}^k$ is a linear functional and $\mathcal{M}^k$ is a nonlinear (quadratic) term.

\begin{theorem}
\label{thm:var_to_pde}
Assume that the operators $\mathcal{L}$ and $F$ satisfy \cref{ass:bound,ass:ellip,ass:SA-PD,ass:Lip} and that $0<h<\frac{1}{2\lambda_2},$ where $\lambda_2$ is the constant from \cref{ass:ellip}. 
Then, the minimizer of \eqref{eq:energy_functional} is the unique solution of \eqref{eq:discretization2} in $\mathcal{H}_0^1 (\R^d)$.
\end{theorem}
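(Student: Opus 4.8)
The plan is to show that $I^k$ is strictly convex and coercive on $\mathcal{H}_0^1(\R^d)$, so that it admits a unique minimizer, and then to compute the first variation (Gâteaux derivative) of $I^k$ and show that its vanishing is exactly the weak form of \eqref{eq:discretization2}. Concretely, first I would establish coercivity: expanding $I^k$ and using the Gårding inequality \cref{ass:ellip} together with Young's inequality on the cross term $-\ip{u,U^{k-1}}_{L^2}$ and on $h\ip{F(U^{k-1}),u}_{L^2}$, one gets
\[
I^k(u) \ge \tfrac12(1-2h\lambda_2)\norm{u}_{L^2}^2 + \tfrac{h\lambda_1}{2}\norm{u}_{\cH_0^1}^2 - C,
\]
where $C$ depends only on $U^{k-1}$ and $F(U^{k-1})$; here the hypothesis $h<\frac{1}{2\lambda_2}$ is precisely what makes the $L^2$ coefficient positive, so $I^k(u)\to\infty$ as $\norm{u}_{\cH_0^1}\to\infty$. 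Combined with weak lower semicontinuity of $I^k$ (the quadratic form $\mathcal{M}^k$ is convex and continuous, hence weakly l.s.c. on the reflexive space $\cH_0^1$, and $\mathcal{G}^k$ is bounded linear hence weakly continuous), the direct method of the calculus of variations yields existence of a minimizer. Strict convexity — the positive-definiteness of $\mathcal{L}$ from \cref{ass:SA-PD} gives $\ip{\cL u,u}\ge 0$, and the $\frac12\norm{u}_{L^2}^2$ term is strictly convex — gives uniqueness.

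Next I would compute $\frac{d}{d\epsilon}I^k(u+\epsilon\varphi)\big|_{\epsilon=0}$ for arbitrary $\varphi\in\cH_0^1(\R^d)$. Using self-adjointness of $\mathcal{L}$ (\cref{ass:SA-PD}) the quadratic term differentiates to $h\ip{\cL u,\varphi}_{\cH^{-1},\cH_0^1}$, the $L^2$-norm term gives $\ip{u-U^{k-1},\varphi}_{L^2}$, and the linear term gives $h\ip{F(U^{k-1}),\varphi}_{L^2}$. Setting the derivative to zero for all $\varphi$ shows that the minimizer $u$ satisfies
\[
\ip{(u-U^{k-1}) + h\cL u + hF(U^{k-1}),\varphi}=0 \quad\text{for all }\varphi\in\cH_0^1(\R^d),
\]
which is exactly the weak form of \eqref{eq:discretization2}. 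Conversely, any solution of \eqref{eq:discretization2} in $\cH_0^1$ is a critical point of the convex functional $I^k$, hence the global minimizer; since the minimizer is unique, \eqref{eq:discretization2} has a unique solution and it coincides with the minimizer.

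I expect the main technical obstacle to be making the coercivity estimate fully rigorous while tracking the dependence on $h$, in particular checking that the positive-definiteness assumption on $\mathcal{L}$ and the Gårding inequality interact correctly — one must be careful that \cref{ass:SA-PD} is used for the convexity/differentiation of the quadratic form and \cref{ass:ellip} for coercivity, and that $\mathcal{L}U^k$ makes sense as an element of $\cH^{-1}$ (which follows from the boundedness in \cref{ass:bound}, guaranteeing $\mathcal{M}^k$ is finite on all of $\cH_0^1$). A secondary point requiring care is that \cref{ass:Lip} is not actually needed here for existence/uniqueness of the minimizer — $F(U^{k-1})$ is a fixed element once $U^{k-1}$ is given — but it is listed in the hypotheses for consistency with the surrounding results; I would simply not invoke it, or remark that it is inherited from the standing assumptions. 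Everything else is routine: the direct method and the Euler–Lagrange computation are standard once the convexity and coercivity bookkeeping is done.
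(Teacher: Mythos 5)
Your proposal is correct and follows essentially the same route as the paper: coercivity from the G\aa rding inequality under $h<\frac{1}{2\lambda_2}$, weak lower semicontinuity, the direct method for existence, strict convexity (via \cref{ass:SA-PD,ass:ellip}) for uniqueness of the minimizer, and the Euler--Lagrange computation to identify the minimizer as the weak solution of \eqref{eq:discretization2}. The only small variation is that you obtain uniqueness of the solution of \eqref{eq:discretization2} as a corollary of convexity (every solution is a critical point hence the unique global minimizer), whereas the paper argues directly that the homogeneous equation $\frac{w}{h}+\cL w=0$ has only the trivial solution; both are short and equivalent, and your observation that \cref{ass:Lip} is never invoked in this proof also matches the paper.
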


The proof of this theorem is based on the following two preparatory results.

\begin{lemma}
\label{lem:weak_con}
Consider the setting of \cref{thm:var_to_pde}. 
Then, the functional $I^k$ is bounded from below and, for any $w_*\in \cH_0^1(\R^d)$ and sequence $w_m \xrightharpoonup{\mathcal{H}_0^1} w_*$, we have
\begin{align*}
    \liminf_{m \to \infty} I^k \left( w_m \right) \ge I^k \left( w_* \right).
\end{align*}
\end{lemma}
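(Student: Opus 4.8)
The plan is to establish the two assertions — boundedness from below and weak lower semicontinuity — by treating the linear part $\mathcal{G}^k$ and the quadratic part $\mathcal{M}^k$ separately. First I would observe that $\mathcal{G}^k$ is a bounded linear functional on $\mathcal{H}_0^1(\R^d)$: indeed $u \mapsto -\ip{u,U^{k-1}}_{L^2} + h\ip{F(U^{k-1}),u}_{L^2}$ is linear, and by Cauchy--Schwarz together with the continuous embedding $\mathcal{H}_0^1 \hookrightarrow L^2$ it is bounded (the constant term $\tfrac12\norm{U^{k-1}}_{L^2}^2$ is harmless). Hence $\mathcal{G}^k$ is weakly continuous, so $\mathcal{G}^k(w_m) \to \mathcal{G}^k(w_*)$ along any weakly convergent sequence, and $\mathcal{G}^k$ is certainly bounded below on any bounded set; more to the point, $|\mathcal{G}^k(u)| \le C(1 + \norm{u}_{\mathcal{H}_0^1})$.

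For the quadratic term, the key point is coercivity-type control. Using \cref{ass:ellip} (the G\aa rding inequality) we get
\[
\mathcal{M}^k(u) = \tfrac12 \norm{u}_{L^2}^2 + \tfrac{h}{2}\ip{\cL u, u}_{\cH^{-1},\cH_0^1} \ge \tfrac12 \norm{u}_{L^2}^2 + \tfrac{h\lambda_1}{2}\norm{u}_{\cH_0^1}^2 - \tfrac{h\lambda_2}{2}\norm{u}_{L^2}^2 = \tfrac{1-h\lambda_2}{2}\norm{u}_{L^2}^2 + \tfrac{h\lambda_1}{2}\norm{u}_{\cH_0^1}^2,
\]
and since $0 < h < \tfrac{1}{2\lambda_2}$ the coefficient $1 - h\lambda_2 > 0$, so $\mathcal{M}^k(u) \ge \tfrac{h\lambda_1}{2}\norm{u}_{\cH_0^1}^2$. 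Combining this with $\mathcal{G}^k(u) \ge -C(1+\norm{u}_{\cH_0^1})$ gives $I^k(u) \ge \tfrac{h\lambda_1}{2}\norm{u}_{\cH_0^1}^2 - C(1+\norm{u}_{\cH_0^1})$, which is a quadratic-minus-linear expression in $\norm{u}_{\cH_0^1}$ and is therefore bounded below on all of $\mathcal{H}_0^1(\R^d)$.

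For the weak lower semicontinuity it remains to show $\liminf_{m\to\infty}\mathcal{M}^k(w_m) \ge \mathcal{M}^k(w_*)$. The term $\tfrac{h}{2}\ip{\cL u, u}_{\cH^{-1},\cH_0^1}$ is, by \cref{ass:SA-PD}, a nonnegative quadratic form associated with a self-adjoint positive definite operator; together with \cref{ass:bound} (boundedness of $\cL$) it defines an equivalent inner product — or at least a nonnegative bounded quadratic form — on $\mathcal{H}_0^1$, hence the map $u \mapsto \ip{\cL u, u}$ is convex and strongly continuous, and convex strongly continuous functionals are weakly lower semicontinuous. The term $\tfrac12\norm{u}_{L^2}^2$ is likewise convex and (by the embedding) continuous on $\mathcal{H}_0^1$, hence weakly lower semicontinuous there as well. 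Adding the weakly continuous $\mathcal{G}^k$ yields $\liminf_m I^k(w_m) = \liminf_m \mathcal{M}^k(w_m) + \lim_m \mathcal{G}^k(w_m) \ge \mathcal{M}^k(w_*) + \mathcal{G}^k(w_*) = I^k(w_*)$.

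The only mildly delicate point — and what I would treat as the main obstacle — is the weak lower semicontinuity of $u \mapsto \ip{\cL u,u}_{\cH^{-1},\cH_0^1}$: one must be careful that $\cL$ acts between $\mathcal{H}_0^1$ and its dual, so the pairing is not literally an $L^2$ inner product, and the cleanest justification is to note that positive definiteness plus boundedness make $(u,v)\mapsto \ip{\cL u, v}$ a bounded, symmetric, nonnegative bilinear form, so $Q(u) := \ip{\cL u,u}$ satisfies $Q(u) = \sup_{v}\big(2\ip{\cL u,v} - Q(v)\big)$ (a Legendre-type representation as a supremum of weakly continuous affine functionals), which is manifestly weakly lsc; alternatively one invokes the standard fact that a nonnegative quadratic form that is strongly continuous on a Hilbert space is weakly lsc. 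Everything else is a routine assembly of Cauchy--Schwarz, the Sobolev embedding, and Young's inequality.
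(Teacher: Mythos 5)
Your proof is correct, but it takes a genuinely different route from the paper for the weak lower semicontinuity. The paper's argument is elementary and self-contained: it writes down the parallelogram-type identity
\[
I^k(w_m) + I^k(w_*) - I^k(w_*-w_m) = \ip{w_m,w_*}_{L^2} + h\ip{\cL w_*, w_m}_{\cH^{-1},\cH_0^1} + 2\mathcal{G}^k(w_m),
\]
observes that the right-hand side converges (weak convergence plus self-adjointness of $\cL$), and then uses $\mathcal{M}^k(w_*-w_m)\ge 0$ — a consequence of \cref{ass:ellip} and the restriction $h<\frac{1}{2\lambda_2}$ — to throw away the last term on the left and obtain the $\liminf$ inequality. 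In effect, the paper proves ``from scratch'' the weak lower semicontinuity of the nonnegative quadratic form $\mathcal{M}^k$ by an explicit algebraic manipulation. You instead split $I^k = \mathcal{M}^k + \mathcal{G}^k$, note that the affine part $\mathcal{G}^k$ is weakly continuous (bounded linear plus a constant), and invoke the general principle that a convex, strongly lower semicontinuous functional on a Banach space is weakly lower semicontinuous (Mazur: closed convex epigraph is weakly closed), applying it to $\|u\|_{L^2}^2$ and to the quadratic form $\ip{\cL u,u}$. Your Legendre-type representation $Q(u)=\sup_v\bigl(2\ip{\cL u,v}-Q(v)\bigr)$ is a correct and pleasant way to exhibit $Q$ as a supremum of weakly continuous affine functionals. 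Both proofs rely on the same hypotheses — self-adjointness and nonnegativity of $\cL$ and the step-size restriction — but your argument is shorter at the cost of citing an external lsc result, whereas the paper's parallelogram identity keeps the proof entirely elementary and also prepares the ground for \cref{prop:main1}, where essentially the same identity is reused to prove strict convexity and hence uniqueness of the minimizer. The boundedness-from-below argument is the same in spirit in both (Gårding coercivity of $\mathcal{M}^k$ dominating the at-most-linear growth of $\mathcal{G}^k$), though the paper uses a slightly different Young-type split to reach the same conclusion.
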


\begin{proof}
Let us first prove that $I^k$ is bounded from below. 
Using the Cauchy--Schwarz inequality and the inequality $\alpha\beta\leq \alpha^2/4+\beta^2$, we get that
\begin{align*}
\mathcal{G}^k (u) 
    & \ge - \norm{U^{k - 1}}_{L^2} \norm{u}_{L^2} + \frac{1}{2} \norm{U^{k - 1}}^2_{L^2} - h \norm{F \left( U^{k - 1} \right)}_{L^2} \norm{u}_{L^2} \\ 
    &= - \norm{u}_{L^2}\Big(\norm{U^{k - 1}}_{L^2}+ h \norm{F \left( U^{k - 1} \right)}_{L^2}\Big) + \frac{1}{2} \norm{U^{k - 1}}^2_{L^2}  \\
    &\ge - \frac{1}{4} \norm{u}^2_{L^2} - \left\{ \norm{U^{k - 1}}_{L^2} + h \norm{F \left( U^{k - 1} \right)}_{L^2} \right\} ^2 + \frac{1}{2} \norm{U^{k - 1}}^2_{L^2}.
\end{align*}
Hence, the functional $I^k$ satisfies
\begin{align*}
I^k (u) 
    &= \mathcal{M}^k (u) + \mathcal{G}^k (u) \\ 
    &\ge \frac{1}{4} \norm{u}^2_{L^2} + \frac{h}{2} \ip{\cL  u,  u}_{\cH^{-1},\cH_0^1} - \left\{ \norm{U^{k - 1}}_{L^2} + h \norm{F \left( U^{k - 1} \right)}_{L^2} \right\} ^2 + \frac{1}{2} \norm{U^{k - 1}}^2_{L^2}.
\end{align*}
Using \cref{ass:ellip} and the condition $h<\frac{1}{2\lambda_2}$, we have that $I^k (u)$ is bounded below by 
\begin{align}\label{eq:Ik-bound}
I^k (u) 
    &\ge \frac{1}{4} \norm{u}^2_{L^2} + \frac{h}{2} \ip{\cL  u,  u}_{\cH^{-1},\cH_0^1} - R^{\left( 1 \right)}_k \nonumber\\
    &\ge  \Big(\frac{1}{4}-\frac{h \lambda_2}{2}\Big) \norm{u}^2_{L^2} + \frac{h\lambda_1}{2} \norm{u}^2_{\cH_0^1} - R^{\left( 1 \right)}_k,
\end{align}
where the term $R_k^{(1)}$, defined below, is independent of $u$ and finite
\begin{align}\label{eq:jnw}
R^{\left( 1 \right)}_k := \left\{ \norm{U^{k - 1}}_{L^2} + h\norm{F \left( U^{k - 1} \right)}_{L^2} \right\} ^2 - \frac{1}{2} \norm{U^{k - 1}}^2_{L^2}. 
\end{align}

As for the second part, consider $w_*\in \cH_0^1(\R^d)$ and a sequence $(w_m)_m$ such that $w_m \xrightharpoonup{\mathcal{H}_0^1} w_*$ as $m\to\infty$.
Then, by the definition of weak convergence
\begin{align*}
\frac{1}{2} \ip{w_m, w_*}_{L^2} + \frac{h}{2} \ip{\cL  w_*, w_m}_{\cH^{-1},\cH_0^1} &\xrightarrow[\ m\to\infty\ ]{} \frac{1}{2} \norm{w_*}^2_{L^2} + \frac{h}{2}  \ip{\cL  w_*,  w_*}_{\cH^{-1},\cH_0^1}, \\ 
\intertext{while for the linear part we also have that} 
\mathcal{G}^k \left( w_m \right) &\xrightarrow[\ m\to\infty\ ]{} \mathcal{G}^k \left( w_* \right) .
\end{align*}
Consider now the functional
\begin{align}\label{eq:diff-Ik}
I^k \left( w_m \right) + I^k \left( w_* \right) - I^k \left( w_* - w_m \right) 
    = \underbrace{\ip{w_m, w_*}_{L^2} + h \ip{\cL  w_*,  w_m}_{\cH^{-1},\cH_0^1} + 2 \mathcal{G}^k \left( w_m \right)}_{\longrightarrow \ 2 I^k \left( w_* \right)},
\end{align}
and notice that 
\begin{align}\label{eq:Mk-positive}
\mathcal{M}^k (u)  = \frac{1}{2} \norm{u}^2_{L^2} + \frac{h}{2} \ip{\cL  u,  u}_{\cH^{-1},\cH_0^1}\ge \Big(\frac{1}{2}-\frac{h \lambda_2}{2}\Big) \norm{u}^2_{L^2} + \frac{h\lambda_1}{2} \norm{u}^2_{\cH_0^1}\ge 0,
\end{align}
from \cref{ass:ellip} and $h<\frac{1}{2\lambda_2}$.
Then, taking the limit as $m \to \infty$ on both sides of \eqref{eq:diff-Ik} and using \eqref{eq:Mk-positive}, we get that
\begin{align*}
\liminf_{m \to \infty} I^k \left( w_m \right) + I^k \left( w_* \right) - \underbrace{\liminf_{m \to \infty} I^k \left( w_* - w_m \right)}_{ \ge 0} \ge 2 I^k \left( w_* \right),
\end{align*}
which implies $\liminf_{m \to \infty} I^k \left( w_m \right) \ge I^k \left( w_* \right)$. 
\end{proof}

\begin{proposition}
\label{prop:main1}
Consider the setting of \cref{thm:var_to_pde}. 
Let $U^{k - 1} \in \mathcal{H}_0^1 (\R^d)$, then there exists a unique minimizer in $\mathcal{H}_0^1 (\R^d)$ of the functional $I^k$. 
\end{proposition}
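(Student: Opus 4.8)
The plan is to establish existence by the direct method of the calculus of variations and uniqueness by strict convexity, relying on the two structural properties of $I^k$ already recorded in \cref{lem:weak_con} — boundedness from below and weak lower semicontinuity on $\cH_0^1(\R^d)$ — together with the quantitative lower bounds \eqref{eq:Ik-bound} and \eqref{eq:Mk-positive}.

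For existence, I would first observe that $m := \inf_{u\in\cH_0^1(\R^d)} I^k(u)$ is finite: it is bounded below by \cref{lem:weak_con}, and $I^k(0) = \tfrac12\norm{U^{k-1}}_{L^2}^2 < \infty$ since $U^{k-1}\in\cH_0^1(\R^d)$. Taking a minimizing sequence $(u_n)_n$ with $I^k(u_n)\to m$, the bound \eqref{eq:Ik-bound} — whose two coefficients $\tfrac14-\tfrac{h\lambda_2}{2}$ and $\tfrac{h\lambda_1}{2}$ are both strictly positive under the standing hypothesis $h<\tfrac{1}{2\lambda_2}$ — shows that $I^k$ is coercive, hence $(u_n)_n$ is bounded in $\cH_0^1(\R^d)$. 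Since $\cH_0^1(\R^d)$ is a separable Hilbert space, a bounded sequence has a weakly convergent subsequence, say $u_{n_j}\xrightharpoonup{\cH_0^1} u_\star$ with $u_\star\in\cH_0^1(\R^d)$. The weak lower semicontinuity from \cref{lem:weak_con} then gives $I^k(u_\star)\le \liminf_{j\to\infty} I^k(u_{n_j}) = m$, so $I^k(u_\star)=m$ and $u_\star$ is a minimizer.

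For uniqueness, I would prove that $I^k$ is strictly convex. The term $\mathcal{G}^k$ is affine, hence convex; the term $\mathcal{M}^k$ is the quadratic form $\tfrac12 B(u,u)$ associated with the symmetric bilinear form $B(u,v) = \ip{u,v}_{L^2} + h\ip{\cL u,v}_{\cH^{-1},\cH_0^1}$, the symmetry being exactly the self-adjointness of $\cL$ from \cref{ass:SA-PD}. The polarization identity then yields $\mathcal{M}^k\big(\tfrac{u+v}{2}\big) = \tfrac12\big(\mathcal{M}^k(u)+\mathcal{M}^k(v)\big) - \tfrac14\mathcal{M}^k(u-v)$, while \eqref{eq:Mk-positive} (again using $h<\tfrac{1}{2\lambda_2}$) gives $\mathcal{M}^k(u-v)\ge \big(\tfrac12-\tfrac{h\lambda_2}{2}\big)\norm{u-v}_{L^2}^2 + \tfrac{h\lambda_1}{2}\norm{u-v}_{\cH_0^1}^2 > 0$ whenever $u\ne v$. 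Combining these with the affinity of $\mathcal{G}^k$ gives $I^k\big(\tfrac{u+v}{2}\big) = \tfrac12\big(I^k(u)+I^k(v)\big) - \tfrac14\mathcal{M}^k(u-v) < \tfrac12\big(I^k(u)+I^k(v)\big)$ for $u\ne v$, so $I^k$ is strictly convex. In particular, if $u_1\ne u_2$ were two minimizers, then $I^k\big(\tfrac{u_1+u_2}{2}\big) < \tfrac12\big(I^k(u_1)+I^k(u_2)\big) = m$, contradicting the definition of $m$; hence the minimizer is unique.

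I do not expect a genuine obstacle here: the argument is a standard application of the direct method. The only points requiring a little care are coercivity of $I^k$ and strict positivity of $\mathcal{M}^k(u-v)$ for $u\ne v$, and both of these are already contained in the estimates \eqref{eq:Ik-bound} and \eqref{eq:Mk-positive} proved for \cref{lem:weak_con}, each of which hinges crucially on the smallness condition $h<\tfrac{1}{2\lambda_2}$ assumed in \cref{thm:var_to_pde}.
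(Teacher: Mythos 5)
Your proof is correct and takes essentially the same route as the paper: existence by the direct method (coercivity from \eqref{eq:Ik-bound} to bound a minimizing sequence, weak compactness in the Hilbert space $\cH_0^1$, and the weak lower semicontinuity established in \cref{lem:weak_con}), and uniqueness via strict convexity using the polarization identity and \eqref{eq:Mk-positive} — the latter being exactly the computation $I^k(w_1)+I^k(w_2)-2I^k\bigl(\tfrac{w_1+w_2}{2}\bigr)=\tfrac14\norm{w_1-w_2}^2_{L^2}+\tfrac{h}{4}\ip{\cL(w_1-w_2),w_1-w_2}_{\cH^{-1},\cH_0^1}>0$ that appears in the paper. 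The only cosmetic difference is that the paper packages coercivity as the explicit bounded set $\mathcal{B}^k$ on which the infimum is attained, whereas you argue directly that the minimizing sequence is bounded; these are the same observation in different clothing.
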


\begin{proof}
Let us first show the uniqueness of the minimizer of the functional $I^k$. 
Let $w_1,w_2\in\mathcal{H}_0^1 (\R^d)$ be two minimizers of $I^k$ then, using \Cref{ass:SA-PD,ass:ellip}, we get
\begin{align*}
I^k \left( w_1 \right) + I^k \left( w_2 \right) - 2 I^k \left( \frac{w_1 + w_2}{2} \right) 
    &= \frac{1}{4}\norm{w_1 - w_2}^2_{L^2} + \frac{h}{4} \ip{\cL  (w_1-w_2),  w_1-w_2}_{\cH^{-1},\cH_0^1} \\
    &\ge \Big(\frac{1}{4}-\frac{h \lambda_2}{4}\Big) \norm{w_1-w_2}^2_{L^2} + \frac{h\lambda_1}{4} \norm{w_1-w_2}^2_{\cH_0^1}
    \overset{\eqref{eq:Mk-positive}}{\ge} 0, 
\end{align*}
which is $0$ if and only if $w_1 = w_2$ almost everywhere. Otherwise, $I^k \left( \frac{w_1 + w_2}{2} \right)$ is smaller than $I^k \left( w_1 \right)$, which is a contradiction.

Next, we show the existence of a minimizer for $I^k$.
Define the bounded set $\mathcal{B}^k \subset \mathcal{H}_0^1 (\R^d)$ via
\begin{align*}
\mathcal{B}^k 
    := \left\{ f \in \mathcal{H}_0^1 (\R^d) \Big| \ \Big(\frac{1}{4}-\frac{h \lambda_2}{2}\Big) \norm{f}^2_{L^2} + \frac{h \lambda_1}{2} \norm{f}^2_{\cH_0^1}  \leq R^{\left( 1 \right)}_k +\frac{1}{2}\norm{U^{k-1}}^2_{L^2}\right\} , 
\end{align*}
where the constant $R^{\left( 1 \right)}_k$ is defined in \eqref{eq:jnw}. 
Consider an $f \notin \mathcal{B}^k$ then, using inequality \eqref{eq:Ik-bound}, we have that $I^k \left( f \right) \ge \frac{1}{2}\norm{U^{k-1}}^2_{L^2}$. 
Using that $0\in \mathcal{B}^k$, $I^k( 0 ) = \frac{1}{2}\norm{U^{k-1}}^2_{L^2}$, and that $I^k$ is bounded from below, we conclude that
\begin{align*}
    \inf_{w \in \mathcal{B}^k} I^k \left( w \right) = \inf_{w \in \mathcal{H}_0^1} I^k \left( w \right) > - \infty.
\end{align*}
Let us now choose $w_m \in \mathcal{B}^k $ such that $I^k \left( w_m \right) \to \inf_{w \in \mathcal{B}^k} I^k \left( w \right)$. 
Let us also define $w_*$ as the weak limit of $w_m$ in $\cH_0^1$. 
Then, by \cref{lem:weak_con}, 
\begin{align*}
 \inf_{w \in \mathcal{H}_0^1} I^k \left( w \right) = \inf_{w \in \mathcal{B}^k} I^k \left( w \right) = \liminf_{m \to \infty} I^k \left( w_m \right) \ge I^k \left( w_* \right) .
\end{align*}
The last inequality readily implies $I^k \left( w_* \right) = \inf_{w \in \mathcal{H}_0^1} I^k \left( w \right)$.
\end{proof}

\begin{proof}[Proof of \cref{thm:var_to_pde}]
Consider the homogeneous equation 
\[
 \frac{w}{h} +\cL w = 0.
\]
Multiplying with $w$ on each side and integrating, implies $\frac{1}{h}\norm{w}^2_{L^2} + \ip{\cL  w,  w}_{\cH^{-1},\cH_0^1}  = 0$. 
Using \cref{ass:ellip} and $h< \frac{1}{2\lambda_2} $, yields that $w=0.$ 
Therefore the homogeneous equation only has the solution $w=0$ in $\mathcal{H}_0^1 (\R^d)$. 
Thus, the solution of \eqref{eq:discretization2} is unique.
%

Assume that $U^k$ minimizes $I^k$, and let $v$ be a smooth function. 
Consider the function
\begin{align*}
i^k \left( \tau \right) 
     = I^k \left( U^k + \tau v \right) 
    &= \frac{1}{2} \norm{U^k + \tau v - U^{k - 1}}^2_{L^2} \\
    &\quad + \frac{h}{2} \ip{\cL  (U^k + \tau v),  U^k + \tau v}_{\cH^{-1},\cH_0^1}   +h\ip{ F \left( U^{k - 1} \right),U^k+\tau v}_{L^2}, 
\end{align*}
for $\tau \in \mathbb{R}$. 
Since $U^k$ minimizes $I^k$, $\tau = 0$ should minimize $i^k$. 
Hence, we have
\begin{align*}
 0 = \left( i^k \right)' \left( 0 \right)  
 = & \ip{U^k - U^{k - 1},v}_{L^2}+ \frac{h}{2}\Big(\ip{\cL  U^k,  v}_{\cH^{-1},\cH_0^1}+\ip{\cL  v,U^k}_{\cH^{-1},\cH_0^1}\Big)  + h \ip{F \left( U^{k - 1} \right),v}_{L^2} \\ 
 = & \ip{U^k - U^{k - 1},v}_{L^2} + h\ip{\cL  U^k,  v}_{\cH^{-1},\cH_0^1} + h\ip{F \left( U^{k - 1} \right),v}_{L^2}, 
\end{align*}
where in the last equality we used that $\cL$ is self-adjoint. 
This equality must hold for all $v$, thus \eqref{eq:discretization2} holds. 
Finally, note that the second derivative of $i^k$ equals
\[
\left( i^k \right)''(\tau) = \norm{v}_{L^2}^2 + h \ip{\cL v, v}_{\cH^{-1},\cH_0^1}  \ge (1 - h \lambda_2) \norm{v}_{L^2}^2 + h \lambda_1 \norm{v}_{\cH_0^1}^2 > 0,
\]
where we used \cref{ass:ellip}. 
Therefore, $\tau=0$ is indeed the minimizer.
\end{proof}



\subsection{Neural network approximation and a version of the Universal Approximation Theorem}
\label{sec:neural}

We use a neural network to approximate the solution of the PDE \eqref{eq:discretization} or, more specifically, the solution of the optimization problem $\min_{u\in\mathcal{H}^1_0} I^k(u)$ in \eqref{eq:energy_functional}. 
The fourth step is to consider a more general problem: show that any function $v \in \cH_0^1 (\R^d)$ can be approximated by a neural network. 
\citet{hornik1991approximation} proved that a different class of neural networks, see \cref{rem:hornik}, is dense in $\cH_0^1 \left( D\right)$, for some bounded domain $D\subseteq\R^d$. 
However, in our case, the domain equals $\R^d$, therefore we need a tailor-made version of the Universal Approximation Theorem. 

\begin{definition}[Activation function]
\label{def:activation function}
An activation function is a function $\psi: \mathbb{R}^d \to \mathbb{R}$ such that $\psi \in C_c^\infty (\R^d)$ and $\int_{\R^d}\psi (x) \ud x \ne 0$.
\end{definition}

\begin{definition}[Neural network]
\label{def:NN}
Let $\psi$ be an activation function, then we define
\[
    \mathcal{C}^n \left(\psi\right) = \left\{ \zeta: \mathbb{R}^d \to \mathbb{R} \ \big| \ \zeta (x) = \sum_{i = 1}^n \beta_i \psi ( \alpha_i x + c_i ) \right\}, 
\]
as the class of neural networks with a single hidden layer and $n$ hidden units. 
The vector of weights and biases equals
\[
\theta = \left( \beta_1, \dots, \beta_n, \alpha_1, \dots, \alpha_n, c_1, \dots, c_n \right) \in \mathbb{R}^n \times \R^n \times \R^{d \times n},
\]
with $\alpha_i \ne 0$ for all $i\in\{1,\dots,n\}$, thus the dimension of the parameter space equals $\left(2+d\right)n$. 
Moreover, we set $\mathcal{C} \left( \psi \right) = \cup_{n \ge 1} \mathcal{C}^n \left( \psi \right)$. 
\end{definition}

\begin{remark}
In the sequel, we consider PDEs that take values in $\R^d$, thus choosing an activation function $\psi$ in $C_c^\infty (\R^d)$ is convenient. 
Then, we require that $\alpha_i \ne 0$, otherwise $\psi \left( \alpha_ix + c_i \right)$ is a constant, which is not integrable on $\R^d$.
\end{remark}

\begin{remark}\label{rem:hornik}
\citet{hornik1991approximation} introduced a class of neural networks of the form $\xi\left(x\right)=\sum_{i=1}^n \beta_i\phi \left( a_i\cdot x + c_i \right)$ where $\phi: \R \to \R$ and $a_i\in \R^d.$ 
The dimension of the parameter space in this case equals again $\left(2+d\right)n$.
However, this kind of neural network does not belong to $L^2(\R^d)$. 
In fact, it is not possible to prove that this class of neural networks is dense in $\cH_0^1 (\R^d),$ even if $\phi$ has compact support. 
Consider, for example, the case $d=2,$ set $n=1$, $\phi=\boldsymbol{1}_{\left[-1,1\right]},$ $a_1=\left(1,-1\right)$. 
Then $\norm{\phi\left(\alpha_1\cdot\right)}_{L^2}=\int_{\R^2}\boldsymbol{1}_{\abs{x-y}\le 1}\ud x \ud y,$ which is the area of an unbounded belt, and therefore equal to $+\infty.$ 
\end{remark}

\begin{theorem}
    \label{thm:neural app}
Let $\psi$ be an activation function, then the space of neural networks $\mathcal{C}\left(\psi\right)$ is dense in $\mathcal{H}_0^1 (\R^d)$. 
\end{theorem}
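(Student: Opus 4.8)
The plan is to combine a mollification argument with a Riemann-sum discretisation. Since, by definition, $C_c^\infty(\R^d)$ is dense in $\cH_0^1(\R^d) = W_0^{1,2}(\R^d)$, it suffices to show that every $f\in C_c^\infty(\R^d)$ can be approximated in the $\cH_0^1$-norm by elements of $\mathcal{C}(\psi)$; note also that each element of $\mathcal{C}(\psi)$ is a finite sum of dilates and translates of $\psi\in C_c^\infty(\R^d)$, hence lies in $\cH_0^1(\R^d)$, so this indeed yields density. Normalising, we may assume without loss of generality that $\int_{\R^d}\psi(x)\,\ud x = 1$ (otherwise divide $\psi$ by its integral, which only rescales the coefficients $\beta_i$). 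For $\alpha>0$ set $\psi_\alpha(x) := \alpha^d\psi(\alpha x)$, so that $\{\psi_\alpha\}_{\alpha>0}$ is an approximate identity.

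First I would show that $f\ast\psi_\alpha \to f$ in $\cH_0^1(\R^d)$ as $\alpha\to\infty$. This is the standard mollification estimate: since $D^\gamma(f\ast\psi_\alpha) = (D^\gamma f)\ast\psi_\alpha$ for every multi-index $\abs{\gamma}\le 1$, and convolution of an $L^2$ function with an approximate identity converges in $L^2$ (via continuity of translation in $L^2$ and dominated convergence), each weak derivative up to order one converges in $L^2$, whence $\norm{f - f\ast\psi_\alpha}_{\cH_0^1}\to 0$.

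Second, for a fixed (large) $\alpha$, I would approximate the convolution integral
\[
 (f\ast\psi_\alpha)(x) = \int_{\R^d} f(y)\,\alpha^d\psi(\alpha x - \alpha y)\,\ud y
\]
by a Riemann sum. Choosing a cube $[-R,R]^d\supseteq\operatorname{supp} f$, partitioning it into finitely many subcubes $Q_j$ of side $\delta$ with nodes $y_j\in Q_j$, one sets
\[
 \zeta_{\alpha,\delta}(x) := \sum_j \big(f(y_j)\,\alpha^d \abs{Q_j}\big)\,\psi(\alpha x - \alpha y_j) = \sum_j \beta_j\,\psi(\alpha_j x + c_j),
\]
with $\alpha_j = \alpha\neq 0$, $c_j = -\alpha y_j$ and $\beta_j = f(y_j)\alpha^d\abs{Q_j}$, so that $\zeta_{\alpha,\delta}\in\mathcal{C}^n(\psi)$ for some finite $n$. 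Because $f$ has compact support and $\psi\in C_c^\infty$, the map $(x,y)\mapsto f(y)\psi(\alpha x-\alpha y)$, as well as its $x$-gradient obtained by differentiating $\psi$, is continuous with compact support, hence uniformly continuous; this gives $\zeta_{\alpha,\delta}\to f\ast\psi_\alpha$ and $\nabla\zeta_{\alpha,\delta}\to\nabla(f\ast\psi_\alpha)$ uniformly as $\delta\to 0$. Since all these functions are supported in a single fixed compact set (depending only on $\alpha$, $R$ and $\operatorname{supp}\psi$), uniform convergence upgrades to convergence in $L^2(\R^d)$, so $\norm{\zeta_{\alpha,\delta} - f\ast\psi_\alpha}_{\cH_0^1}\to 0$ as $\delta\to 0$.

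Finally, given $\e>0$, I would pick $\alpha$ so large that $\norm{f - f\ast\psi_\alpha}_{\cH_0^1}<\e/2$ and then $\delta$ so small that $\norm{f\ast\psi_\alpha - \zeta_{\alpha,\delta}}_{\cH_0^1}<\e/2$; the triangle inequality concludes. The only place where some care is required is the second step: controlling the Riemann-sum error \emph{in the $\cH_0^1$-norm}, i.e.\ simultaneously for the function and its gradient. Since both reductions are to the uniform continuity of an explicit compactly supported function, no genuine obstruction arises, and the rest of the argument is routine.
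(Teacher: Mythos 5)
Your proof is correct, and it takes a genuinely different route from the paper. You argue directly and constructively: first you reduce to approximating $f\in C_c^\infty(\R^d)$ (dense in $\cH_0^1$); then you mollify with the approximate identity $\psi_\alpha = \alpha^d\psi(\alpha\,\cdot)$ built from the activation itself, getting $f\ast\psi_\alpha\to f$ in $\cH_0^1$; and finally you replace the convolution by a finite Riemann sum $\sum_j \beta_j\psi(\alpha x + c_j)\in\mathcal{C}(\psi)$, controlling both the function and its gradient in the sup-norm over a fixed compact set, hence in $L^2$. The paper instead argues by contradiction through functional analysis: if $\mathcal{C}(\psi)$ were not dense, Hahn--Banach and Riesz would produce a nonzero $g\in\cH_0^1$ orthogonal to every network; after mollifying $g$ and $\nabla g$, a sub-lemma (essentially the same approximate-identity observation you use, phrased as ``testing against rescaled translates of $\psi$ kills continuous functions'') forces $g_1^\e - \nabla\cdot(g_2^\e)=0$, and letting $\e\to0$ against $C_c^\infty$ test functions gives $g=0$, a contradiction. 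Both proofs rest on the same analytic core — that $\alpha^d\psi(\alpha\,\cdot)$ is an approximate identity, which is precisely where the hypothesis $\int\psi\ne 0$ enters — but yours is shorter, avoids the Hahn--Banach/Riesz machinery, and has the bonus of being constructive: it exhibits an explicit approximating network (with a single shared dilation $\alpha$ and translations $c_j=-\alpha y_j$ on a grid). The one step you flag as needing care — Riemann-sum convergence simultaneously for the function and its gradient — does go through exactly as you say, because $\nabla_x\zeta_{\alpha,\delta}$ is again a Riemann sum for $\nabla_x(f\ast\psi_\alpha)$ with the uniformly continuous, compactly supported integrand $f(y)\,\alpha^{d+1}(\nabla\psi)(\alpha x-\alpha y)$.
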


The proof of this theorem builds on the proof of the next two lemmata.

\begin{lemma}    \label{lem:1}
Let $\psi$ be an activation function. 
Let $g$ be a continuous function, \emph{i.e.} $g \in C (\R^d)$. 
Suppose that, for any $\zeta \in \mathcal{C}(\psi)$, holds $\int_{\R^d} \zeta (x) g (x) \ud x = 0$. 
Then $g = 0$. 
\end{lemma}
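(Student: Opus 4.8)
The plan is to use the linearity of the pairing $\zeta\mapsto\int_{\R^d}\zeta(x)g(x)\,\ud x$ in the output weights to reduce the hypothesis to a single neuron, and then to recover the pointwise values of $g$ by a concentration (approximate-identity) argument. I will carry this out in three short steps.

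\emph{Step 1 (reduction to one neuron).} Because $\int_{\R^d}\zeta(x)g(x)\,\ud x=0$ for every $\zeta\in\mathcal C(\psi)$ and this quantity is linear in each $\beta_i$, taking $n=1$ already gives
$\int_{\R^d}\psi(\alpha x+c)\,g(x)\,\ud x=0$ for all $\alpha\in\R\setminus\{0\}$ and all $c\in\R^d$. I will note in passing that each such integral is well defined even though $g$ is merely continuous: since $\psi\in C_c^\infty(\R^d)$ and $\alpha\neq0$, the map $x\mapsto\psi(\alpha x+c)g(x)$ is continuous with compact support.

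\emph{Step 2 (change of variables, centring at a point).} I fix an arbitrary $x_0\in\R^d$. The substitution $y=\alpha x+c$ (Jacobian $\abs{\alpha}^{-d}$) converts the identity of Step 1 into $\int_{\R^d}\psi(y)\,g\!\left(\tfrac{y-c}{\alpha}\right)\ud y=0$; choosing $c=-\alpha x_0$ gives $\int_{\R^d}\psi(y)\,g\!\left(x_0+\tfrac{y}{\alpha}\right)\ud y=0$ for every $\alpha\neq0$.

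\emph{Step 3 (limit $\abs{\alpha}\to\infty$).} In the variable $y$ the integrand is supported in the fixed compact set $\mathrm{supp}\,\psi$, and on that set $y/\alpha\to0$ uniformly as $\abs{\alpha}\to\infty$, so $g(x_0+y/\alpha)\to g(x_0)$ by continuity of $g$. I will invoke dominated convergence with the dominating function $\norm{\psi}_{\infty}\,\big(\sup_{\bar B}\abs g\big)\,\Bi_{\mathrm{supp}\,\psi}$, where $\bar B$ is any fixed closed ball containing $x_0+\mathrm{supp}\,\psi$ (hence containing all arguments $x_0+y/\alpha$ with $\abs{\alpha}\ge1$). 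This yields $0=\lim_{\abs{\alpha}\to\infty}\int_{\R^d}\psi(y)g(x_0+y/\alpha)\,\ud y=g(x_0)\int_{\R^d}\psi(y)\,\ud y$. Since $\int_{\R^d}\psi\neq0$ by \cref{def:activation function}, it follows that $g(x_0)=0$, and as $x_0$ was arbitrary, $g\equiv0$.

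I do not expect a genuine obstacle here: the argument is elementary. The only points that require a little care are the well-definedness of the integrals — which is exactly what the compact support of $\psi$ in \cref{def:NN} buys us, in contrast with the ridge-type networks of \cref{rem:hornik} — and a clean domination on a fixed compact neighbourhood of $x_0$ in Step 3; the possible global unboundedness of $g$ is immaterial because the whole computation localizes near $x_0$.
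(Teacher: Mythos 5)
Your proof is correct and takes essentially the same route as the paper: both arguments concentrate a single neuron around the point $x_0$ (the paper via an explicit mollifier $\Phi^\e$ with scale parameter $\e=1/\abs{\alpha}$, you by sending $\abs{\alpha}\to\infty$ directly), apply dominated convergence on $\mathrm{supp}\,\psi$, and conclude $g(x_0)\int_{\R^d}\psi = 0$. The only cosmetic difference is that you drop the inessential outer scaling $\beta=\e^{-d}$ — harmless, since the hypothesis already gives a vanishing integral — so the substance of the argument is identical.
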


\begin{remark}
Since $\psi \in C^\infty_c (\R^d)$, any function $\zeta$ in $\mathcal{C} \left(\psi\right)$ has compact support. Hence $\int_{\R^d} \zeta (x) g (x) \ud x$ is well-defined. 
\end{remark}

\begin{proof}
Let $g \in C (\R^d)$, $x \in \R^d$, $0 < \e \leq 1$, and define 
\[
\Phi^\e \left( g \right) (x) := \int_{\R^d} \e^{-d} \psi \left( \frac{x - y}{\e} \right) g \left( y \right) \ud y.
\]
We would like to show that 
\[
 \lim_{\e \to 0} \Phi^\e \left( g \right) (x) = cg (x),
\]
where $c = - \int_{\R^d} \psi (x) \ud x$. 
Using a change of variables twice, we have that 
\begin{align*}
\Phi^\e \left( g \right) (x) 
    &= \int_{\R^d} \e^{- d} \psi \left( \frac{x - y}{\e} \right) g \left( y \right) \ud y 
    \stackrel{z = x - y}{=} - \int_{\R^d} \e^{- d}\psi \left( \frac{z}{\e} \right) g \left( x - z \right) \ud z \\ 
    &\stackrel{m = \e^{-1}z}{=} - \int_{\R^d} \psi \left( m \right) g \left( x - \e m \right) \ud m 
    = - \int_K \psi \left( m \right) g \left( x - \e m \right) \ud m,
\end{align*}
where $K$ denotes the (compact) support of $\psi$. 
Notice that, since $z$ is a vector, we have that $m=\frac{z}{\e}$ yields $\ud m = \e^{-d}\ud z$.
Then, using the dominated convergence theorem, we get that
\begin{align*}
\lim_{\e \to 0} \Phi^\e \left( g \right) (x) 
    = \lim_{\e \to 0} \Big\{ - \int_K \psi \left( m \right) g \left( x - \e m \right) \ud m \Big\} = cg (x).
\end{align*}
Now, consider any $\zeta \in \mathcal{C} \left(\psi\right)$ such that $\int_{\R^d} \zeta (y) g (y) \ud y = 0$; then, for any $x \in \R^d$, setting $n=1$, $\beta=\e^{-d}$, $\alpha = \e^{-1}$ and $c = \frac{x}{\e}$ in the definition of $\mathcal{C} \left(\psi\right)$, we get that
\[
 \int_{\R^d} \e^{- d} \psi \left( \frac{x - y}{\e} \right) g \left( y \right) \ud y = 0.
\]
We conclude the proof by sending $\e \to 0$ and using that $c \ne 0$, by definition of an activation function.
\end{proof}


\begin{lemma}
\label{lem:2}
Let $w$ be a function on $C^\infty (\R^d)$ with support on the unit sphere, where
\[
w (x) = \left\{ 
\begin{aligned}
c\exp \left( \frac{-1}{1 - \abs{x}^2} \right), \quad & \text{if $\abs{x} < 1$}, \\ 
0, \quad & \text{if $\abs{x} \ge 1$,}
\end{aligned}
\right.
\]
where $c$ is a constant such that the integral of $w$ equals $1$. 
Let $f \in L^1_{\emph{loc}} (\R^d)$, and introduce
\[
J^\e f (x) = w_\e*f (x) = \int_{\R^d} w_\e \left( y \right) f \left( x - y \right) \ud y, 
\]
with $w_\e = \e^{-d}w \left( \frac{x}{\e} \right)$.
Then, for any $\varphi \in C^\infty_c$ and $f \in L^1_{\emph{loc}} (\R^d)$, we have that
\begin{align*}
 \lim_{\e \to 0}\ip{\varphi, J^\e f}_{L^2} = \ip{\varphi, f}_{L^2}.
\end{align*}
\end{lemma}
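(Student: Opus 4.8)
The plan is to prove the convergence of the mollification $J^\e f$ to $f$ in the sense of distributions against smooth compactly supported test functions, which is a classical mollifier argument; the only twist is that $f$ is merely in $L^1_{\text{loc}}$, so the pairing $\ip{\varphi, f}_{L^2}$ must be understood as $\int_{\R^d} \varphi(x) f(x)\,\ud x$, which is well-defined since $\varphi$ has compact support and $f$ is locally integrable, and similarly the pairing $\ip{\varphi, J^\e f}_{L^2}$ is well-defined once we check that $J^\e f$ is locally bounded (indeed continuous). First I would record the standard properties of the mollifier kernel $w_\e$: it is nonnegative, smooth, supported in the ball of radius $\e$, and integrates to $1$. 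Then I would write out the difference
\[
\ip{\varphi, J^\e f}_{L^2} - \ip{\varphi, f}_{L^2} = \int_{\R^d} \varphi(x)\Big( \int_{\R^d} w_\e(y)\big(f(x-y) - f(x)\big)\,\ud y\Big)\,\ud x,
\]
using $\int w_\e = 1$ to insert $f(x)$.

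The key step is to move the mollification onto the test function by Fubini's theorem, which is justified because $\varphi$ is bounded with compact support, $w_\e$ is integrable, and $f \in L^1_{\text{loc}}$ restricted to a neighbourhood of $\mathrm{supp}\,\varphi$ is integrable; this gives
\[
\ip{\varphi, J^\e f}_{L^2} = \int_{\R^d} \Big( \int_{\R^d} w_\e(y)\,\varphi(x+y)\,\ud y \Big) f(x)\,\ud x = \ip{J^\e \varphi, f}_{L^2},
\]
where $J^\e\varphi = w_\e * \varphi$ after the harmless sign change $y \mapsto -y$ permitted by the symmetry of $w$. Now $\varphi$ is uniformly continuous with compact support, so $J^\e \varphi \to \varphi$ uniformly as $\e \to 0$, and moreover all the $J^\e \varphi$ are supported in a fixed compact neighbourhood $K$ of $\mathrm{supp}\,\varphi$ (for $\e \le 1$). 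Since $f \in L^1_{\text{loc}}$ we have $f \in L^1(K)$, so
\[
\abs{\ip{J^\e \varphi, f}_{L^2} - \ip{\varphi, f}_{L^2}} \le \norm{J^\e\varphi - \varphi}_{L^\infty(K)} \, \norm{f}_{L^1(K)} \longrightarrow 0,
\]
which completes the proof.

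The main obstacle is really just bookkeeping rather than a conceptual difficulty: one must be careful that all the integrals appearing are finite and that Fubini applies, which hinges on the compact support of $\varphi$ together with local integrability of $f$ — without the transfer of the mollifier onto $\varphi$, a direct estimate would require a continuity-in-$L^1$ statement for translations of $f$, which is true but slightly more delicate to invoke for $L^1_{\text{loc}}$ functions. Transferring the convolution onto the smooth function sidesteps this entirely and reduces everything to uniform convergence of $w_\e * \varphi$ to $\varphi$, which is elementary.
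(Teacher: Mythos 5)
Your proof is correct, and it takes a genuinely different (and arguably cleaner) route than the paper's. The paper substitutes $z = y/\e$ to write $\ip{\varphi, J^\e f}_{L^2} = \int_K\int_K \varphi(x)\,w(z)\,f(x-\e z)\,\ud z\,\ud x$ and then passes to the limit by invoking the dominated convergence theorem together with Lusin's theorem — Lusin being needed because $f \in L^1_{\text{loc}}$ need not converge pointwise under translation, so one has to approximate $f$ by a continuous function on the compact set $K$. You instead use Fubini and the radial symmetry of $w$ to transfer the mollifier from $f$ onto $\varphi$, obtaining $\ip{J^\e\varphi, f}_{L^2}$, and then conclude from the uniform convergence $J^\e\varphi \to \varphi$ (a direct consequence of the uniform continuity of $\varphi$) together with the $L^1(K)$ bound on $f$. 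Your argument avoids entirely the question of how $f(\cdot - \e z)$ behaves in the limit, which is the subtle point the paper handles via Lusin; in exchange you need the explicit Fubini swap and the evenness of $w$, both of which you correctly identify and justify. This is the more standard textbook route for duality-style mollifier convergence, and it is cleaner precisely because the convergence is carried by the smooth factor.
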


\begin{remark}
The convolution of $w_\e$ with $f$ is convenient, because then $J^\e f$ is infinitely differentiable. 
\end{remark}

\begin{proof}
Let us first rewrite $\ip{\varphi, J^\e f}_{L^2}$ as follows
\begin{align*}
\ip{\varphi, J^\e f}_{L^2} 
    &= \int_{\R^d} \int_{\R^d} \varphi (x) w_\e \left( y \right) f \left( x - y \right) \ud y \ud x
    = \e^{-d} \int_{\R^d} \int_{\R^d} \varphi (x) w \left( \frac{y}{\e} \right) f \left( x - y \right) \ud y \ud x \\ 
    &\stackrel{z = y/\e }{=} \int_{\R^d} \int_{\R^d} \varphi (x) w \left( z \right) f \left( x - \e z \right) \ud z \ud x = \int_K \int_K \varphi (x) w \left( z \right) f \left( x - \e z \right) \ud z \ud x
\end{align*}
where $K$ is a compact set that contains the support of $w$ and $\varphi$. 
Hence, by the dominated convergence theorem and Lusin's theorem, letting $\e \to 0$ and using that the integral of $w$ equals 1, we have
\[
 \lim_{\e \to 0}\ip{\varphi, J^\e f}_{L^2} = \lim_{\e \to 0} \int_K \int_K \varphi (x) w \left( z \right) f \left( x - \e z \right) \ud z \ud x = \ip{\varphi, f}_{L^2}. \qedhere
\]
\end{proof}

\begin{proof}[Proof of \cref{thm:neural app}]
Observe that $\mathcal{C} \left(\psi\right) \subset \mathcal{H}_0^1 (\R^d)$, since $\psi \in C^\infty_c (\R^d)$. Assume that $\mathcal{C} \left(\psi\right)$ is not dense in $\mathcal{H}_0^1 (\R^d)$ then, as a corollary of the  Hahn--Banach extension theorem, see \textit{e.g.} \citet[Corollary 4.12]{Jan}, there exists a non-zero continuous linear functional $G$ on $\mathcal{H}_0^1 (\R^d)$ such that for any $\zeta \in \mathcal{C} \left(\psi\right)$,
\begin{align*}
 G \left( \zeta \right) = 0.
\end{align*}
Using the Riesz representation theorem, there exists a $g \ne 0$ in $\mathcal{H}_0^1 (\R^d)$, such that for any $f \in \mathcal{H}_0^1 (\R^d)$,
\[
 \ip{f, g}_{\mathcal{H}_0^1 (\R^d)} = G \left( f \right). 
 \]
Therefore $\ip{\zeta, g}_{L^2} + \ip{\nabla \zeta, \nabla g}_{L^2} = 0$. 
Let us denote $g^\e_1 = J^\e g$ and $g^\e_2 = J^\e \nabla g$, for convenience. 
Consider the inner product of these functions with $\zeta$, then we have
\begin{align*}
\ip{\zeta, g^\e_1}_{L^2} &+ \ip{\nabla \zeta, g^\e_2}_{L^2} \\ 
    &= \int_{\R^d} \int_{\R^d} \zeta (x) w_\e \left( y \right) g \left( x - y \right) \ud y \ud x + \int_{\R^d} \int_{\R^d} \nabla \zeta (x) \cdot \left[ w_\e \left( y \right) \nabla g \left( x - y \right) \right] \ud y \ud x \\ 
    &= \int_{\R^d} \left( \int_{\R^d} \zeta (x) g \left( x - y \right) \ud x \right) w_\e \left( y \right) \ud y + \int_{\R^d} \left( \int_{\R^d} \nabla \zeta (x) \cdot \nabla g \left( x - y \right) \ud x \right) w_\e \left( y \right) \ud y \\ 
    &\stackrel{x = z + y}{=}  \int_{\R^d} \left( \int_{\R^d} \zeta \left( z + y \right) g \left( z \right) dz \right) w_\e \left( y \right) \ud y + \int_{\R^d} \left( \int_{\R^d} \nabla \zeta \left( z + y \right) \cdot \nabla g \left( z \right) dz \right) w_\e \left( y \right) \ud y \\ 
    &=  \int_{\R^d} \Big\{ \ip{\zeta \left( \cdot + y \right), g}_{L^2} + \ip{\nabla \zeta \left( \cdot + y \right), \nabla g}_{L^2} \Big\} \, w_\e \left( y \right) \ud y = 0.
\end{align*}
Since $\zeta \in \mathcal{C} \left(\psi\right)$, it has compact support and we can apply Fubini's theorem in the second step, while we can also use that $\zeta \left( \cdot + y \right) \in \mathcal{C} \left(\psi\right)$ in the last step. 
Hence $\ip{\zeta, g^\e_1}_{L^2} + \ip{\nabla \zeta, g^\e_2}_{L^2} = 0$.
Then, using integration by parts,
\begin{align*}
 \ip{\zeta, g^\e_1 - \nabla \cdot \left( g^\e_2 \right)}_{L^2} = 0.
\end{align*}
Since $g^\e_1 - \nabla \cdot \left( g^\e_2 \right)$ is continuous, see \textit{e.g.} \citet[Proposition 11.1]{Jan}, by \cref{lem:1}, we conclude that $g^\e_1 - \nabla \cdot \left( g^\e_2 \right) = 0$. 
Then, for any $f \in C^\infty_c (\R^d)$, 
\[
 \ip{f, g^\e_1}_{L^2} + \ip{\nabla f, g^\e_2}_{L^2} = \ip{f, g^\e_1 - \nabla \cdot \left( g^\e_2 \right)}_{L^2} = 0. 
\]
Using \cref{lem:2}, for any $f \in C^\infty_c (\R^d)$, we get that
\begin{align*}
 G \left( f \right) = \ip{f, g}_{L^2} + \ip{\nabla f, \nabla g}_{L^2} = \lim_{\e \to 0} \ip{f, g^\e_1}_{L^2} + \ip{\nabla f, g^\e_2}_{L^2} = 0.
\end{align*}
Since $C^\infty_c (\R^d)$ is dense in $\mathcal H_0^1$ with norm $\norm{\cdot}_{\mathcal{H}_0^1}$, $G$ is a zero functional on $\mathcal{H}_0^1$, which is a contradiction.
\end{proof}


\subsection{Convergence of the minimizer}
\label{sec:convergence}

The final step, is to show that the minimizer approximated by neural networks converges to the solution of the PDE, which yields that the approximation error of the method converges to zero.
\cref{thm:var_to_pde} yields that the minimizer of the functional $I^k$ in \eqref{eq:energy_functional} equals the unique solution of discretized equation \eqref{eq:discretization2}.
Here, we show that this minimizer can be approximated by a neural network as defined in \cref{def:NN}.
The final conclusion, \emph{i.e.} the convergence to the true solution of PDE \eqref{eq:general_operator}--\eqref{eq:operator}, follows by an application of \cref{thm:time-stepping}, which shows that the time-stepping scheme converges to the PDE.

\begin{theorem}
\label{thm:5}
Let $(w_m)_{m\in\mathbb N}$ be a sequence in $\mathcal{H}_0^1 (\R^d)$ and $w_*$ be the minimizer of $I^k$. Then 
\[
    \lim_{m \to \infty}\norm{w_m - w_*}_{\mathcal{H}^1_0} = 0 
        \quad \text{ if and only if } \quad 
    \lim_{m \to \infty} I^k \left( w_m \right) = I^k \left( w_* \right).
\]
\end{theorem}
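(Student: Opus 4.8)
The plan is to prove the two implications separately: the ``only if'' direction is just continuity of $I^k$ with respect to the $\cH_0^1$-norm, whereas the ``if'' direction is where the variational structure of $I^k$ and the minimality of $w_*$ come into play.

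For the forward implication, I would set $e_m := w_m - w_*$ and expand $I^k(w_m) - I^k(w_*)$ by bilinearity, using that $\cL$ is self-adjoint (\cref{ass:SA-PD}) to merge the cross terms into $2\ip{\cL w_*, e_m}_{\cH^{-1},\cH_0^1}$; this yields
\[
I^k(w_m) - I^k(w_*) = \ip{w_* - U^{k-1} + hF(U^{k-1}), e_m}_{L^2} + h\ip{\cL w_*, e_m}_{\cH^{-1},\cH_0^1} + \tfrac12\norm{e_m}_{L^2}^2 + \tfrac{h}{2}\ip{\cL e_m, e_m}_{\cH^{-1},\cH_0^1}.
\]
Estimating each summand by Cauchy--Schwarz together with \cref{ass:bound} (which also guarantees $F(U^{k-1})\in L^2$) and the elementary inequality $\norm{f}_{L^2}\le\norm{f}_{\cH_0^1}$, I would obtain $\abs{I^k(w_m) - I^k(w_*)} \le C_1\norm{e_m}_{\cH_0^1} + C_2\norm{e_m}_{\cH_0^1}^2$, with $C_1, C_2$ depending only on $h$, $M$, $U^{k-1}$, and $w_*$; hence $\norm{e_m}_{\cH_0^1}\to 0$ forces $I^k(w_m)\to I^k(w_*)$.

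For the backward implication, I would reuse the polarization identity already derived in the proof of \cref{prop:main1}, now with $w_1 = w_m$ and $w_2 = w_*$:
\[
I^k(w_m) + I^k(w_*) - 2 I^k\!\Big(\tfrac{w_m+w_*}{2}\Big) = \tfrac14\norm{w_m-w_*}_{L^2}^2 + \tfrac{h}{4}\ip{\cL(w_m-w_*), w_m-w_*}_{\cH^{-1},\cH_0^1}.
\]
Since $w_*$ minimizes $I^k$ over $\cH_0^1(\R^d)$ and $\tfrac12(w_m+w_*)\in\cH_0^1(\R^d)$, the left-hand side is bounded above by $I^k(w_m) - I^k(w_*)$; and invoking \cref{ass:ellip} together with the standing hypothesis $0 < h < \tfrac{1}{2\lambda_2}$, exactly as in \eqref{eq:Mk-positive}, the right-hand side is at least $\tfrac{h\lambda_1}{4}\norm{w_m-w_*}_{\cH_0^1}^2$. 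Combining,
\[
\tfrac{h\lambda_1}{4}\norm{w_m-w_*}_{\cH_0^1}^2 \le I^k(w_m) - I^k(w_*) \xrightarrow[\,m\to\infty\,]{} 0,
\]
so $\norm{w_m-w_*}_{\cH_0^1}\to 0$.

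I do not expect a genuine obstacle: the computations are routine, and the one point worth stressing is that the very same convexity estimate used to prove uniqueness in \cref{prop:main1} simultaneously furnishes the quantitative coercivity bound $I^k(w_m) - I^k(w_*) \gtrsim \norm{w_m-w_*}_{\cH_0^1}^2$, which is precisely what makes the backward implication work — no ingredient beyond \cref{ass:bound,ass:ellip,ass:SA-PD} and the minimality of $w_*$ is needed.
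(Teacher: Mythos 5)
Your proof is correct, and the backward direction is a genuinely different -- and more economical -- argument than the paper's. Both proofs handle the forward implication by continuity of $I^k$: the paper invokes \cref{prop:functional_convergence}, you expand $I^k(w_m)-I^k(w_*)$ directly into linear and quadratic pieces in $e_m$ and estimate by Cauchy--Schwarz and \cref{ass:bound}; the arguments are interchangeable. For the backward implication, however, the paper takes a longer road: it first shows $w_m \rightharpoonup w_*$ via a contradiction argument that uses pre-weak compactness of bounded sequences, the weak lower semicontinuity of $I^k$ (\cref{lem:weak_con}), and the uniqueness of the minimizer, and only then extracts strong convergence by a further passage through $\mathcal M^k$ and $\mathcal G^k$. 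You bypass weak topology entirely. The parallelogram identity from the proof of \cref{prop:main1}, combined with $I^k\!\left(\tfrac{w_m+w_*}{2}\right) \ge I^k(w_*)$ and the coercivity
\[
\frac{1}{4}\norm{e_m}_{L^2}^2 + \frac{h}{4}\ip{\cL e_m, e_m}_{\cH^{-1},\cH_0^1} \ge \left(\frac{1}{4}-\frac{h\lambda_2}{4}\right)\norm{e_m}_{L^2}^2 + \frac{h\lambda_1}{4}\norm{e_m}_{\cH_0^1}^2,
\]
yields at once the quantitative bound $\norm{w_m-w_*}_{\cH_0^1}^2 \le \frac{4}{h\lambda_1}\big(I^k(w_m)-I^k(w_*)\big)$. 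This is the standard strong-convexity mechanism, and it is strictly stronger than what the paper proves (you get a rate, not just convergence), while using fewer tools -- \cref{lem:weak_con} and the weak compactness argument are not needed at all. The one thing worth making explicit, which you implicitly rely on, is that the hypotheses of \cref{thm:var_to_pde} (in particular $0 < h < \tfrac{1}{2\lambda_2}$, \cref{ass:SA-PD,ass:ellip,ass:bound}) are in force so that the polarization identity holds with self-adjoint $\cL$ and the coefficient $\tfrac14 - \tfrac{h\lambda_2}{4}$ is nonnegative.
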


\begin{remark}
Therefore, we can select the approximation sequence $(w_m)$ from the space of neural networks $\mathcal{C} \left(\psi\right)$. 
Using that $\mathcal{C} \left(\psi\right)$ is dense in $\cH_0^1 (\R^d)$, an approximation sequence always exists. 
\end{remark}

\begin{remark}
Let us point out that for an arbitrary $u \in \cH_0^1 (\R^d)$, $\abs{I^k \left( u_m \right) - I^k (u)} \to 0$, does not imply $\norm{u_m - u}_{L^2} \to 0$.
Consider, for example, $F = 0$, then $I^k$ is quadratic and we can always choose $u_m = - u$ since $I^k (u) = I^k \left( - u \right)$. 
\end{remark}


\begin{proposition}[Continuity]
\label{prop:functional_convergence}
Assume that the operators $\mathcal L$ and $F$ satisfy \cref{ass:bound,ass:SA-PD}, then the functional $I^k$ is continuous, \emph{i.e.} for any $f, u \in \cH_0^1 (\R^d)$, holds
\begin{align*}
    \big| I^k \left( f \right) - I^k (u) \big| \le \left( 1 + hM \right) \norm{f - u}_{\cH_0^1} \left( \norm{f + u}_{\cH_0^1} + \norm{U^{k - 1}}_{\cH_0^1} \right). 
\end{align*}
\end{proposition}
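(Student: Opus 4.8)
The plan is to estimate $\big|I^k(f) - I^k(u)\big|$ by splitting it according to the three summands in the definition \eqref{eq:energy_functional} of $I^k$ and, in each piece, factoring out $\norm{f-u}_{\cH_0^1}$. The contributions of the quadratic $L^2$-term and of the bilinear $\cL$-term will both be handled via \emph{difference of squares} identities valid in an inner product space, while the $F$-term is linear in the variable under consideration and hence is immediate.

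For the first summand I would use $\norm{f-U^{k-1}}_{L^2}^2 - \norm{u-U^{k-1}}_{L^2}^2 = \ip{f-u,\,f+u-2U^{k-1}}_{L^2}$, apply Cauchy--Schwarz together with the triangle inequality $\norm{f+u-2U^{k-1}}_{L^2} \le \norm{f+u}_{L^2} + 2\norm{U^{k-1}}_{L^2}$, and then bound each $L^2$-norm by the corresponding $\cH_0^1$-norm; this produces the term $\norm{f-u}_{\cH_0^1}\big(\norm{f+u}_{\cH_0^1} + \norm{U^{k-1}}_{\cH_0^1}\big)$ and accounts for the leading $1$ in the constant $1+hM$. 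For the second summand, the point is that \cref{ass:SA-PD} makes the form $\ip{\cL\cdot,\cdot}_{\cH^{-1},\cH_0^1}$ symmetric, hence $\ip{\cL f,f}_{\cH^{-1},\cH_0^1} - \ip{\cL u,u}_{\cH^{-1},\cH_0^1} = \ip{\cL(f-u),\,f+u}_{\cH^{-1},\cH_0^1}$ by bilinearity; the boundedness inequality in \cref{ass:bound} then bounds this by $M\norm{f-u}_{\cH_0^1}\norm{f+u}_{\cH_0^1}$, contributing $\tfrac{hM}{2}\norm{f-u}_{\cH_0^1}\norm{f+u}_{\cH_0^1}$. For the third summand, since $F$ is evaluated only at the frozen argument $U^{k-1}$, the map $u\mapsto h\ip{F(U^{k-1}),u}_{L^2}$ is linear, so its difference equals $h\ip{F(U^{k-1}),f-u}_{L^2}$, which by Cauchy--Schwarz and the second inequality of \cref{ass:bound} (namely $\norm{F(U^{k-1})}_{L^2}\le M\norm{U^{k-1}}_{\cH_0^1}$) is at most $hM\norm{U^{k-1}}_{\cH_0^1}\norm{f-u}_{\cH_0^1}$.

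Finally, I would add the three bounds, use $\tfrac{1}{2}\le 1$ to merge $\tfrac{hM}{2}\norm{f+u}_{\cH_0^1}$ with the remaining $hM$-term into $hM\norm{f-u}_{\cH_0^1}\big(\norm{f+u}_{\cH_0^1} + \norm{U^{k-1}}_{\cH_0^1}\big)$, and collect to obtain $(1+hM)\norm{f-u}_{\cH_0^1}\big(\norm{f+u}_{\cH_0^1}+\norm{U^{k-1}}_{\cH_0^1}\big)$. I do not anticipate a genuine obstacle: the only points requiring mild care are the one-directional comparison $\norm{\cdot}_{L^2}\le\norm{\cdot}_{\cH_0^1}$ used throughout, and the fact that it is self-adjointness of $\cL$ (rather than any Lipschitz hypothesis on $F$) that lets the quadratic part collapse to a single bilinear pairing, which is why \cref{ass:bound,ass:SA-PD} alone suffice here.
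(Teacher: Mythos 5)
Your proposal is correct and follows essentially the same route as the paper: the same difference-of-squares identity $\norm{f-U^{k-1}}^2_{L^2}-\norm{u-U^{k-1}}^2_{L^2}=\ip{f-u,\,f+u-2U^{k-1}}_{L^2}$, the same use of self-adjointness to collapse $\ip{\cL f,f}-\ip{\cL u,u}$ to $\ip{\cL(f-u),f+u}$, and the same Cauchy--Schwarz plus $\norm{\cdot}_{L^2}\le\norm{\cdot}_{\cH_0^1}$ bookkeeping. The paper carries a factor of $2$ inside the term $\norm{f+u-2U^{k-1}}_{L^2}$ slightly differently before the final triangle-inequality step, but this is a cosmetic variant, not a different argument.
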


\begin{proof}
Using the definition of the energy functional in \eqref{eq:energy_functional} and that $\cL$ is linear and self-adjoint, we have
\begin{align}\label{eq:con-1}
\big| I^k \left( f \right) - I^k (u) \big| 
    &= \left| \frac{1}{2} \norm{f - U^{k - 1}}^2_{L^2} +   \frac{h}{2}\ip{\cL f,f}_{\cH^{-1},\cH_0^1} + h\ip{F \left( U^{k - 1} \right),f}_{L^2}  \right. \nonumber\\ 
    &\qquad \left. - \frac{1}{2} \norm{u - U^{k - 1}}^2_{L^2} - \frac{h}{2}\ip{\cL u,u}_{\cH^{-1},\cH_0^1} -h\ip{F \left( U^{k - 1} \right),u}_{L^2} \right| \nonumber\\
    &\leq \frac{1}{2} \Big|\underbrace{\norm{f}^2_{L^2} - \norm{u}^2_{L^2}-2\ip{f-u,U^{k-1}}_{L^2}}_{= \ip{f-u,f+u-2U^{k+1}}}\Big| \nonumber\\
    &\quad + \frac{h}{2} \abs{ \ip{\cL (f-u), f+u}_{\cH^{-1},\cH_0^1} }
        + h \abs{\ip{F \left( U^{k - 1} \right),f - u}_{L^2}} \nonumber\\
    &\stackrel{\text{CS}}{\le} \frac{1}{2} \norm{f - u}_{L^2} \norm{f + u - 2 U^{k-1}}_{L^2} + \frac{h}{2} \abs{\ip{\cL (f-u),f+u}_{\cH^{-1},\cH_0^1}}\nonumber\\ 
    &\quad  + h \abs{\ip{F \left( U^{k - 1} \right),f-u}_{L^2}} .
\end{align}
Moreover, using \cref{ass:bound} and the Cauchy--Schwarz inequality again, we get
\begin{align}\label{eq:con-2}
\frac{h}{2} \abs{\ip{\cL (f-u),f+u}_{\cH^{-1},\cH_0^1}} & + h \abs{\ip{F \left( U^{k - 1} \right),f-u}_{L^2}} \nonumber\\ 
& \le \frac{h M}{2}\norm{  f - u }_{\cH_0^1}\norm{ f + u }_{\cH_0^1} + hM\norm{ U^{k - 1}}_{\cH_0^1}\norm{f - u}_{L^2} \nonumber\\ 
& \le  hM \left( \norm{f + u}_{\cH_0^1} + \norm{U^{k - 1}}_{\cH_0^1} \right) \norm{f - u}_{\cH_0^1},
\end{align}
while from the triangle inequality, we have that
\begin{align}\label{eq:con-3}
 \frac{1}{2} \norm{f - u}_{L^2}\norm{f + u - 2U^{k - 1}}_{L^2} \le \norm{f - u}_{\cH_0^1} \left( \norm{f + u}_{\cH_0^1} + \norm{U^{k - 1}}_{\cH_0^1} \right) . 
\end{align}
Replacing \eqref{eq:con-2} and \eqref{eq:con-3} into \eqref{eq:con-1} completes the proof.
\end{proof}

\begin{proof}[Proof of \cref{thm:5}]
Assume that $\norm{w_m - w_*}_{\cH_0^1} \to 0$. 
Then, the sequence $\norm{w_m - w_*}_{\cH_0^1}$ is bounded by some constant $C > 0$. 
Using \cref{prop:functional_convergence}, we get that
\begin{align*}
 \abs{I^k \left( w_m \right) - I^k \left( w_* \right)}\le \norm{w_m - w_*}_{\cH_0^1} C \left( 1 + \norm{U^{k - 1}}_{\cH_0^1} \right) \to 0.
\end{align*}
Thus $I^k \left( w_m \right)\to I^k \left( w_* \right)$. 

Next, we prove that $I^k \left( w_m \right)\to I^k \left( w_* \right)$ implies that $w_m\to w_*$ in $\cH_0^1$. 
Let us first notice that $w_m \rightharpoonup w_*$. 
Otherwise, there exists a subsequence $(w_{m_i})$, an $\e > 0$ and a nonzero functional $f$ such that $\abs{f \left[ w_{m_i} \right] - f \left[ w_* \right] } \ge \e$. 
Since $(w_{m_i})$ is bounded in $\cH_0^1$ (otherwise $I^k \left( w_{m_i} \right)$ is unbounded), it is pre-weakly compact (which means it has a weakly convergent subsequence, see \textit{e.g.} \citet[Corollary 4.56]{Jan}). 
Let us denote one of its weak limits by $w_{**}$. 
Using \cref{lem:weak_con}, $I^k \left( w_* \right) = \lim_{i \to \infty} I^k \left( w_{m_i} \right) \ge I^k \left( w_{**} \right)$. 
This inequality implies $w_* = w_{**}$ by the uniqueness of the minimizer. 
Hence $w_m \rightharpoonup w_*$, a contradiction with $\abs{f \left[ w_{m_i} \right] - f \left[ w_* \right] } \ge \e$. 
Therefore, $w_m \rightharpoonup w_*$.

Now, since $ I^k \left( w_m \right)\to I^k \left( w_* \right)$, $\mathcal{G}^n (w_m)\to \mathcal{G}^n(w_*)$, and $\mathcal{M}^k \left( w_m \right) \to \mathcal{M}^k \left( w_* \right)$, we have that
\begin{align*}
\frac{1}{2} \norm{w_m}^2_{L^2} + \frac{h}{2}\ip{\cL w_m,w_m}_{\cH^{-1},\cH_0^1} 
    \to \frac{1}{2} \norm{w_*}^2_{L^2} + \frac{h}{2} \ip{\cL w_*,w_*}_{\cH^{-1},\cH_0^1}.
\end{align*}
This convergence implies
\begin{align*}
\left( \frac{1 }{2}-\frac{h\lambda_2}{2} \right) \norm{w_m - w_*}^2_{L^2} &+ \frac{h\lambda_2}{2}\norm{w_m - w_*}^2_{\cH_0^1} \\
    &\le \frac{1 }{2} \norm{w_m - w_*}^2_{L^2} + \frac{h}{2} \ip{\cL (w_m-w_*), w_m -w_*}_{\cH^{-1},\cH_0^1} \to 0,
\end{align*}
since $w_m \rightharpoonup w_*$. 
Therefore, by \cref{ass:ellip}, we conclude $\norm{w_m - w_*}_{\cH_0^1} \to 0$.
\end{proof}


\section{Convergence of the training error}
\label{sec:training}

In this section, we show that for each fixed time step $k$, the trained neural network converges to the true solution of the discretized PDE \eqref{eq:discretization} as the number of neurons and the training time tend to infinity.
Therefore, using the convergence of the time-stepping scheme, we can conclude the convergence of the training error.


\subsection{Convergence of the trained neural network}

In this subsection, we analyze the training of the neural network for the deep gradient flow method as a function of the number of neurons $n$.
In particular, we would like to study the training process of the parameters $\theta^n$ as $n\to\infty$, such that the neural network introduced in \cref{def:NN} approximates the solution of the discretized PDE \eqref{eq:discretization}.
We show that this process satisfies a gradient flow equation as the number of neurons tends to infinity, \emph{i.e.} in the so-called ``wide network limit".

Let us denote the parameters of the neural network by $\theta^n = \left( \beta^i, \alpha^i, c^i \right)^n_{i = 1} \in \mathbb{R}^n \times \R^n \times \R^{d \times n}$. 
Moreover, for $\frac{1}{2} < \delta < 1$, let us introduce a neural network
\begin{align*}
    V^n \left( \theta^n; x \right) &= \frac{1}{n^{\delta}} \sum_{i = 1}^n \hat\beta^{i,n} \psi \left( \hat\alpha^{i,n} x + \hat c^{i,n} \right),
\intertext{in accordance with \cref{def:NN}, where the ``clipped" parameters are defined as follows:}
    \hat\alpha^{i, n} & = \begin{cases} \begin{aligned} \left( r_n \land \alpha^i \right) \lor \frac{1}{r_n}, \quad & \text{for } \alpha^i>0, \\
    \left( \frac{-1}{r_n} \land \alpha^i \right) \lor (-r_n), \quad & \text{for } \alpha^i < 0, \end{aligned} \end{cases} \\
    \hat\beta^{i, n} & = \left( r_n \land \beta^i \right) \lor (-r_n), &&\\
    \hat c^{i, n} & = \left( r_n \land c^i \right) \lor (- r_n) , &&
\end{align*}
for some $r_n$ increasing with $n$. We restrict the domain of the parameters $(\beta^i, \alpha^i, c^i)$ to $[-r_n, r_n]$ which converges to $\mathbb{R}$ as $n \to\infty$, and for $\alpha^i$ we also need to subtract the ball $\left( -\frac{1}{r_n}, \frac{1}{r_n} \right)$.
Gradient clipping is in accordance with deep learning literature, see, for example, \citet{Zhang2020Why} and \citet[Ch. 10 and 11]{Goodfellow_2016}.


Next, let us introduce the gradient descent dynamics for the training process of the parameters $\theta^n$, where $t$ denotes the training time. 
The neural network $V^n(\theta^n;\cdot)$ should minimize the loss functional $I^k$ in \eqref{eq:energy_functional} of the deep gradient flow method. 
Hence, the dynamic of $\theta_t^n$ should match the gradient of $I^k ( V^n ; \cdot )$, \emph{i.e.}
\begin{align}
\label{eq:dytheta}
 \frac{\ud \theta_t^n}{\ud t} = & - \eta_n \nabla_\theta I^k \left( V^n \left( \theta_t^n; x \right) \right) \nonumber \\ 
 = & - \eta_n\ip{\D I^k \left( V_t^n \right), \nabla_\theta V_t^n}_{\cH_0^1}, 
 \end{align}
with learning rate $\eta_n = n^{2\delta - 1}$, where $\D$ denotes the Fr\'echet derivative; \emph{i.e.} for any $u, v \in \cH_0^1 (\R^d)$,
\begin{align}
\label{eq:frechet-loss}
 \ip{\D I^k \left( v \right), u}_{\cH_0^1} = & \ip{v-U^{k-1},u}_{L^2} + h \ip{\cL v,u}_{\cH^{-1},\cH_0^1} + h \ip{F \left( U^{k - 1} \right),u}_{L^2}.
\end{align}
We obtain a coordinate dynamic ${(\theta^{n}_t)_{t\ge0}}=(\theta_t^{i, n})_{t\ge0} = \big( \beta_t^{i, n}, \alpha_t^{i, n}, c_t^{i, n}\big)_{t\ge0}$. 
This dynamic depends on the number of hidden layers $n$ of the neural network, since the parameters that optimally approximate a function depend on the number of parameters we use. 
We use a random initialization for this process, independent of $n$, denoted by:
\[
\left( \beta_0^{i, n}, \alpha_0^{i, n}, c_0^{i, n} \right) = \left( \beta_0^i, \alpha_0^i, c_0^i \right) = \theta^i_0.
\]

\begin{namedassumption}{(NNI)} 
\label{ass:initial}
The parameters $\beta_0^i, \alpha_0^i, c_0^i$ that initialize the neural network are i.i.d. random variables that satisfy:
\begin{itemize}
    \item[(i)] $\beta_0^i$ is a symmetric random variable with finite second moment: $\E \left[ \abs{\beta_0^i}^2 \right] < + \infty$;
    \item[(ii)] $\alpha_0^i \ne 0$ $\p$-almost surely and $\E \left[ \abs{\alpha_0^i}^{d+7} + \abs{\alpha_0^i}^{-d-2} \right] < + \infty$;
    \item[(iii)] $c_0^i$ is an $\R^d$-valued random variable and $\E \left[ \abs{c_0^i}^{d+7} \right] < + \infty$;
    \item[(iv)] $\alpha_0^i,\  c_0^i$, have full support, \emph{i.e.} for any Borel set $A\subset \R$ and $B\subset \R^d$ with positive Lebesgue measure, $\p\left(\alpha_0^i\in A\right)$ and $\p\left(c_0^i\in  B\right)$ are positive. 
\end{itemize}
\end{namedassumption}

Using the chain rule, the dynamic $V_t^n (x) = V^n \left( \theta_t^n; x \right)$ satisfies the following equation
\begin{align}
\label{eq:dynamic_VN}
\frac{\ud V_t^n (x)}{\ud t} 
    &= \nabla_\theta V^n \left( \theta_t^n; x \right) \cdot \frac{\ud \theta^n_t}{\ud t} 
    = - \eta_n \nabla_\theta V^n \left( \theta_t^n; x \right) \cdot \nabla_\theta I^k \left( V^n \left( \theta_t^n; x \right) \right) \nonumber \\ 
    &=- \ip{\mathcal{D}I^k \left( V^n_t \right), Z_t^n( x, \cdot)}_{\cH_0^1},
\intertext{with $V_0^n (x) = V^n \left( \theta_0^n; x \right)$ and}
Z_t^n \left( x, y \right) 
    &= \eta_n \nabla_\theta V_t^n (x) \cdot \nabla_\theta V_t^n \left( y \right) \nonumber\\
    &= \frac{1}{n} \sum_{i = 1}^n \nabla _{\theta} \hat\beta^{i,n}_t \psi \left( \hat\alpha^{i, n}_t x + \hat c^{i, n}_t \right) \cdot \nabla _{\theta} \hat\beta^{i,n}_t \psi \left( \hat\alpha^{i, n}_t y + \hat c^{i, n}_t \right). \nonumber
\end{align}
We expect \eqref{eq:dynamic_VN} to converge to the following gradient flow
\begin{align}
\label{eq:whole}
\frac{\ud V_t (x)}{\ud t} &= - \ip{\mathcal{D} I^k \left( V_t \right), Z( x, \cdot)}_{\cH_0^1},
\intertext{with $V_0 = 0$, where}
Z(x,y) &= 
    \E \big[ \nabla _\theta\beta^1_0 \psi \left( \alpha^1_0 x + c^1_0 \right) \cdot \nabla _\theta\beta^1_0 \psi \left( \alpha^1_0 y + c^1_0 \right) \big], \nonumber 
\end{align}
while the inner product of the Fr\'echet derivative of the loss functional with another functional is defined in \eqref{eq:frechet-loss}.
The gradient flow \eqref{eq:whole} is an infinite-dimensional ODE that governs the dynamics of the wide network limit of the neural network during the training process and, the right-hand side depends on the loss function of the gradient flow method for the solution of PDEs in \eqref{eq:energy_functional}.
Let us also point out that the kernel $Z(\cdot,\cdot)$ is not the standard neural tangent kernel, as it also depends on the loss functional, which further complicates the analysis.
The right-hand side of \eqref{eq:whole}, using \eqref{eq:frechet-loss}, takes the following form:
\[
\begin{aligned}
\T\left(v\right)\left(x\right) 
    &:=  \ip{\mathcal{D} I^k \left( v \right), Z( x, \cdot)}_{\cH_0^1} \nonumber\\
    &= \ip{v-U^{k-1},Z( x, \cdot)}_{L^2} + h\ip{\cL v,Z( x, \cdot)}_{\cH^{-1},\cH_0^1}+ h\ip{F \left( U^{k-1} \right),Z( x, \cdot)}_{L^2}.
\end{aligned}
\]
The analysis of this operator plays a crucial role in the next subsection, where we study the long term behavior of this gradient flow.

Let us introduce the following shorthand notation:
\[
    \mathcal X(\theta; x) := \nabla _\theta \beta \psi \left( \alpha x + c \right)
       \quad  \text{and} \quad
    \mathcal X^n(\theta; x ) := \nabla _\theta \hat\beta \psi \left( \hat\alpha x + \hat c \right)
\]
for some generic parameters $\theta=\left(\beta, \alpha, c\right)\in \R \times \R \times \R^d$, where $(\hat\beta, \hat\alpha, \hat c)$ denotes the clipped version of these parameters. 
Moreover, in order to simplify the notation, let us set 
\[
    X (x):= \mathcal X\left(\theta^1_0;x\right), 
        \quad 
    X^n (x) := \mathcal X^n \left( \theta^1_0;x \right)
        \quad \text{and} \quad 
    X_t^{i,n}(x) := \mathcal X^n \left( \theta^{i,n}_t;x \right). 
\]
Then, using this notation we have that
\begin{align*}
Z_t^n ( x, y ) & = \frac{1}{n} \sum_{i = 1}^n X_t^{i,n}\left(x\right) \cdot X_t^{i,n}\left(y\right), \\
\intertext{and}
Z ( x, y) & = \E \left[ X\left(x\right) \cdot X\left(y\right) \right].
\end{align*}
This representation invites us to use the law of large numbers to conclude that $Z^n_t(x,y)\to Z(x,y)$ as $n\to\infty$.
Intuitively, this is the connection between the gradient flow in \eqref{eq:dynamic_VN} that the neural network follows during the training process, and the corresponding ``wide network limit" in \eqref{eq:whole}.

The main result of this subsection follows, which states that as the number of neurons tends to infinity during the training process, then the neural network $V^n$ converges to the wide network limit $V$, which satisfies the gradient flow \eqref{eq:whole}.

\begin{theorem}
\label{thm:train_NN}
Assume that the neural network satisfies \cref{ass:initial}, and let $r_n$ increase with $n$, while $r_n\le \log n$.
Moreover, assume that the operators of the PDE \eqref{eq:general_operator}--\eqref{eq:operator} satisfy \cref{ass:bound,ass:ellip}.
Then, the dynamic \eqref{eq:dynamic_VN} converges to the gradient flow \eqref{eq:whole} as $n \to \infty$, \emph{i.e.} for any $T>0,$
\begin{align*}
    \sup_{0\le t\le T}\E \left[ \big\| V^n_t - V_t \big\|_{\cH_0^1} \right] \xrightarrow[n\to\infty]{} 0. 
\end{align*}
\end{theorem}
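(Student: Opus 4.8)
The plan is to run a Grönwall estimate for the difference $V^n_t-V_t$ after writing both processes in mild (integral) form, and to control the three sources of discrepancy: the random initialization $V^n_0$, the fluctuation of the random kernel $Z^n_t$ around its mean $Z$, and the drift of the clipped parameters away from their starting values. The delicate point is that the coefficient that enters Grönwall is an operator norm of $Z^n_t$, which is itself random and $n$-dependent; the strategy is therefore to show that, with overwhelming probability, this coefficient is of order one uniformly in $t\in[0,T]$, and to absorb the exceptional event by means of an a priori bound on $\|V^n_t\|_{\cH_0^1}$ that does \emph{not} see $Z^n_t$.

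First I would observe that \eqref{eq:dynamic_VN} and \eqref{eq:whole} are preconditioned gradient flows for the convex functional $I^k$: writing $\mathcal Z^n_t$ (resp.\ $\mathcal Z$) for the positive semi-definite integral operator on $\cH_0^1(\R^d)$ with kernel $Z^n_t$ (resp.\ $Z$), they read $\dot V^n_t=-\mathcal Z^n_t\,\D I^k(V^n_t)$ and $\dot V_t=-\mathcal Z\,\D I^k(V_t)$, the latter being globally well-posed since its right-hand side $\T$ is affine and Lipschitz on $\cH_0^1$ (using \cref{ass:bound} and the finiteness of $\|\mathcal Z\|_{\mathrm{op}}$, itself guaranteed by the moment exponents in \cref{ass:initial}). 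Consequently $\tfrac{\ud}{\ud t}I^k(V^n_t)=-\ip{\D I^k(V^n_t),\mathcal Z^n_t\,\D I^k(V^n_t)}_{\cH_0^1}\le 0$, and likewise for $V_t$, which together with the coercivity estimate \eqref{eq:Ik-bound} (valid for the step sizes considered here) yields $\sup_{t\le T}\|V_t\|_{\cH_0^1}<\infty$ and $\sup_{t\le T}\|V^n_t\|_{\cH_0^1}\le C\bigl(1+I^k(V^n_0)\bigr)^{1/2}$. Since $\beta^1_0$ is symmetric and the clipping is odd in $\beta$, the cross terms vanish and $\E\|V^n_0\|_{\cH_0^1}^2=n^{1-2\delta}\,\E\bigl\|\hat\beta^1_0\,\psi(\hat\alpha^1_0\cdot+\hat c^1_0)\bigr\|_{\cH_0^1}^2\to 0$ because $\delta>\tfrac12$ and the single-neuron second moment is finite and bounded in $n$ by \cref{ass:initial}; hence $V^n_0\to V_0=0$ and $\sup_{t\le T}\E\|V^n_t\|_{\cH_0^1}^2$ is bounded uniformly in $n$. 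This in turn controls the parameter drift: from \eqref{eq:dytheta}, $|\theta^{i,n}_t-\theta^i_0|\le C\,n^{\delta-1}\,\mathrm{poly}(r_n)\bigl(1+\sup_{s\le T}\|V^n_s\|_{\cH_0^1}\bigr)$, which tends to $0$ in expectation since $\delta<1$ and $r_n\le\log n$.

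Next I would subtract the mild forms of \eqref{eq:dynamic_VN} and \eqref{eq:whole} and split the integrand as
\begin{align*}
&\ip{\D I^k(V^n_s)-\D I^k(V_s),\,Z^n_s(x,\cdot)}_{\cH_0^1}\\
&\qquad{}+\ip{\D I^k(V_s),\,Z^n_s(x,\cdot)-Z^n_0(x,\cdot)}_{\cH_0^1}+\ip{\D I^k(V_s),\,Z^n_0(x,\cdot)-Z(x,\cdot)}_{\cH_0^1}.
\end{align*}
Because $\D I^k$ is affine by \eqref{eq:frechet-loss} and the $F(U^{k-1})$-term cancels in differences, \cref{ass:bound} gives $\|\D I^k(v)-\D I^k(w)\|_{\cH_0^1}\le(1+hM)\|v-w\|_{\cH_0^1}$, so the first piece contributes at most $(1+hM)\|V^n_s-V_s\|_{\cH_0^1}$ times the operator norm of $\mathcal Z^n_s$ — this is the term that feeds Grönwall. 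Using the clipping and the drift bound above, each $X^{i,n}_s=\mathcal X^n(\theta^{i,n}_s;\cdot)$ is within $o(1)$ in $\cH_0^1$ of $\mathcal X^n(\theta^i_0;\cdot)$, and a law of large numbers (with variance $O(1/n)$, finite by the moment exponents in \cref{ass:initial}) shows that on an event $\Omega_n$ with $\p(\Omega_n)\ge 1-O(1/n)$ one has $\sup_{s\le T}\|\mathcal Z^n_s\|_{\mathrm{op}}\le 2\|\mathcal Z\|_{\mathrm{op}}+o(1)=O(1)$, while on $\Omega_n^c$ it is still at most $\mathrm{poly}(r_n)\le\mathrm{poly}(\log n)$. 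The second (drift) piece is $O(n^{\delta-1}\mathrm{poly}(r_n))\to 0$, and the third is a law of large numbers for the kernel tested against the \emph{deterministic} function $\D I^k(V_s)$: it splits into a fluctuation of size $O(n^{-1/2})$ and a clipping-bias term $\tfrac1n\sum_{i=1}^n\Bi_{\{\theta^i_0\ \mathrm{clipped}\}}(\cdots)$ whose expectation is $O(r_n^{-\gamma})$ for some $\gamma>0$, again by \cref{ass:initial}; both vanish. On $\Omega_n$, Grönwall with the $O(1)$ coefficient then gives $\sup_{t\le T}\|V^n_t-V_t\|_{\cH_0^1}\le C_T\bigl(\|V^n_0\|_{\cH_0^1}+o(1)\bigr)$; taking expectations and adding the contribution of $\Omega_n^c$, bounded by $\p(\Omega_n^c)\,\sup_{t\le T}\bigl(\|V^n_t\|_{\cH_0^1}+\|V_t\|_{\cH_0^1}\bigr)=O\bigl(n^{-1}\mathrm{poly}(\log n)\bigr)$, yields the claim.

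The main obstacle is the genuine coupling between the random kernel $Z^n_t$ and the trajectory $V^n_t$: neither can be controlled without the other, which is why the a priori bound on $\|V^n_t\|_{\cH_0^1}$ has to be extracted from the energy-dissipation and coercivity structure (so that it is free of $\|\mathcal Z^n_t\|_{\mathrm{op}}$, and in particular does not blow up through a Grönwall factor in $\mathrm{poly}(r_n)$), and why the Grönwall argument must be carried out separately on the high-probability event $\Omega_n$ where $\mathcal Z^n_t$ behaves and on its complement. The second, more technical, difficulty is the sequence of change-of-variables estimates $y=\alpha x+c$ needed to bound $\|\mathcal X(\theta;\cdot)\|_{\cH_0^1}$, $\|\nabla_\theta\mathcal X(\theta;\cdot)\|_{\cH_0^1}$ and the $\cH_0^1$-norm of their products, and to verify finiteness of the variances in the laws of large numbers above; this is exactly what pins down the moment exponents $d+7$ and $-d-2$ in \cref{ass:initial} and the role of the logarithmic clipping rate $r_n\le\log n$.
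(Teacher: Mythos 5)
You follow the same overall skeleton as the paper's proof---a Gr\"onwall estimate on $\|V^n_t-V_t\|_{\cH_0^1}$ fed by (i) the initial condition $V^n_0$, (ii) a contractive term proportional to $\|V^n_s-V_s\|_{\cH_0^1}$, (iii) a law-of-large-numbers fluctuation of the kernel at $s=0$, and (iv) the parameter drift $\theta^{i,n}_s-\theta^i_0$, with the a priori control on $V^n_t$ extracted from energy dissipation and coercivity of $I^k$. The auxiliary estimates you invoke ($\cH_0^1$-bounds on single neurons, $\theta$-Lipschitz continuity, drift of order $n^{\delta-1}\,\mathrm{poly}(r_n)$, vanishing of the clipping bias) are precisely \cref{lem:v0,lem:boundXN,lem:lip_Xtheta,lem:thetat0,lem:epN,lem:EVtN}.

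The genuinely different choice is the telescoping of the integrand, and it changes the character of the Gr\"onwall step. You pair $\D I^k(V^n_s)-\D I^k(V_s)$ with the \emph{random, trajectory-dependent} kernel $Z^n_s$, so your Gr\"onwall coefficient is $\|\mathcal Z^n_s\|_{\mathrm{op}}$ --- a random quantity depending on $n$ and $s$ --- and you must set up a high-probability event $\Omega_n$ on which this operator norm is $O(1)$ uniformly over $[0,T]$, run Gr\"onwall pathwise on $\Omega_n$, and patch $\Omega_n^c$ by an a priori bound. The paper instead pairs the $\D I^k$-difference with the \emph{fixed} limit kernel $Z$ (the paper's term $(a2)$) and moves all kernel discrepancies $Z-Z^n_0$ and $Z^n_0-Z^n_s$ into the terms $(a3)$, $(a4)$, which are tested against $\D I^k(V^n_s)$ and handled in expectation by Cauchy--Schwarz together with $\E\|V^n_s\|^2_{\cH_0^1}\le C_\psi$. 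This makes the Gr\"onwall coefficient the deterministic constant $K\,\E\|X\|^2_{\cH_0^1}$, so Gr\"onwall can be applied directly to $t\mapsto\E\|V^n_t-V_t\|_{\cH_0^1}$, with no pathwise argument, no exceptional event, and no uniform-in-$s$ concentration of an operator norm. Your split buys a slightly cleaner LLN step (since $\D I^k(V_s)$ is deterministic), but at real extra cost: the claim $\sup_{s\le T}\|\mathcal Z^n_s\|_{\mathrm{op}}\le 2\|\mathcal Z\|_{\mathrm{op}}+o(1)$ is not a plain law of large numbers --- the controllable object is the trace $\tfrac1n\sum_i\|X^{i,n}_s\|^2_{\cH_0^1}$, which only dominates the operator norm, requires a fourth-moment bound $\E\|X^n\|^4_{\cH_0^1}\le C(r_n)^{2(d+8)}$, yields exceptional probability $O(\mathrm{poly}(\log n)/n)$ rather than the $O(1/n)$ you assert, and must be transported from $s=0$ to general $s$ through the drift bound, picking up a further $\mathrm{poly}(r_n)$ factor. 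Your asserted $O(r_n^{-\gamma})$ rate for the clipping bias is likewise not furnished by \cref{ass:initial}; the paper's \cref{lem:epN} proves only qualitative convergence $\e_n\to 0$ via dominated convergence, which is all that is needed. These loose ends appear repairable, but they are exactly the complications the paper's decomposition is designed to avoid.
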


\begin{lemma}
\label{lem:v0}
Assume that the neural network satisfies \cref{ass:initial}, then
\begin{align*}
    \E \left[ \norm{V^n_0}_{\cH_0^1} \right] \le C^{\left( 1 \right)}n^{\frac{1}{2} - \delta},
\end{align*}
where $C^{\left( 1 \right)} = \E \left[ \abs{\beta_0^i}^2 \right]^{\frac{1}{2}} \left( \E \left[ \abs{\alpha_0^i}^{-d} \right] + \E \left[ \abs{\alpha_0^i}^{2-d} \right] + 2 \right)^{\frac{1}{2}} \norm{\psi}_{\cH_0^1}$.
\end{lemma}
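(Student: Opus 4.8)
The strategy is to estimate $\E[\|V^n_0\|_{\cH_0^1}]$ through its second moment: by Jensen's inequality it suffices to show that $\E[\|V^n_0\|_{\cH_0^1}^2] \le (C^{(1)})^2 n^{1-2\delta}$. Expanding
\[
\|V^n_0\|_{\cH_0^1}^2 = \frac{1}{n^{2\delta}} \sum_{i,j=1}^n \hat\beta^{i,n}_0 \hat\beta^{j,n}_0 \ip{\psi(\hat\alpha^{i,n}_0 \cdot + \hat c^{i,n}_0), \psi(\hat\alpha^{j,n}_0 \cdot + \hat c^{j,n}_0)}_{\cH_0^1},
\]
the key point is that for $i \ne j$ this term has zero expectation. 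Indeed, the clipping map $x \mapsto (r_n \land x)\lor(-r_n)$ is odd, so $\hat\beta^{i,n}_0$ is a symmetric random variable by \cref{ass:initial}(i); moreover, since the initialization parameters are i.i.d., $\hat\beta^{i,n}_0$ (a function of $\beta^i_0$ alone) is independent of the remaining factors $\hat\beta^{j,n}_0, \hat\alpha^{i,n}_0, \hat c^{i,n}_0, \hat\alpha^{j,n}_0, \hat c^{j,n}_0$, so that $\E[\hat\beta^{i,n}_0(\cdots)] = \E[\hat\beta^{i,n}_0]\,\E[\cdots] = 0$. This cancellation of the off-diagonal terms is precisely what upgrades the naive $n^{1-\delta}$ bound coming from the triangle inequality to the claimed $n^{1/2-\delta}$.

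For the $n$ surviving diagonal terms, a change of variables $y = a x + c$ gives, for a scalar $a \ne 0$ and $c \in \R^d$,
\[
\|\psi(a\cdot + c)\|_{\cH_0^1}^2 = |a|^{-d}\|\psi\|_{L^2}^2 + |a|^{2-d}\|\nabla\psi\|_{L^2}^2 \le \big(|a|^{-d} + |a|^{2-d}\big)\|\psi\|_{\cH_0^1}^2 .
\]
Applying this with $a = \hat\alpha^{i,n}_0$ and $c=\hat c^{i,n}_0$, and using the elementary clipping estimates $|\hat\beta^{i,n}_0| \le |\beta^i_0|$ together with $|\hat\alpha^{i,n}_0|^{-d} \le |\alpha^i_0|^{-d} + 1$ and $|\hat\alpha^{i,n}_0|^{2-d} \le |\alpha^i_0|^{2-d} + 1$ (both valid since $|\hat\alpha^{i,n}_0| \in [1/r_n, r_n]$, where we may assume $r_n \ge 1$ because $r_n$ increases with $n$), the independence of $\beta^i_0$ from $(\alpha^i_0, c^i_0)$ shows that each diagonal term is bounded by
\[
\E\big[|\beta^i_0|^2\big]\Big(\E\big[|\alpha^i_0|^{-d}\big] + \E\big[|\alpha^i_0|^{2-d}\big] + 2\Big)\|\psi\|_{\cH_0^1}^2 = (C^{(1)})^2 ,
\]
so the ``$+2$'' in the definition of $C^{(1)}$ is exactly the footprint of the two clipping estimates.

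Summing over $i$ and dividing by $n^{2\delta}$ gives $\E[\|V^n_0\|_{\cH_0^1}^2] \le (C^{(1)})^2 n^{1-2\delta}$, and Jensen's inequality finishes the proof. It then only remains to check that $C^{(1)} < \infty$: the second moment $\E[|\beta^i_0|^2]$ is finite by \cref{ass:initial}(i), while $\E[|\alpha^i_0|^{-d}]$ and $\E[|\alpha^i_0|^{2-d}]$ are finite because each is dominated by $\E[|\alpha^i_0|^{-d-2} + |\alpha^i_0|^{d+7}]$ after splitting on $\{|\alpha^i_0| \le 1\}$ and $\{|\alpha^i_0| > 1\}$, and this quantity is finite by \cref{ass:initial}(ii). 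The only genuinely substantive point in the argument is the symmetry-driven cancellation of the cross terms; the change-of-variables identity and the clipping bounds are routine, the only care needed being to keep all constants independent of $n$.
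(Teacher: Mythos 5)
Your proof is correct and follows essentially the same route as the paper: bound $\E[\|V^n_0\|_{\cH_0^1}]$ by the square root of the second moment, kill the $i\ne j$ cross terms using the symmetry of $\beta_0^i$ together with the joint independence of the initialization parameters, evaluate the $n$ diagonal terms by the change of variables $y = \hat\alpha x + \hat c$, and absorb the clipping via $|\hat\beta|\le|\beta|$ and $|\hat\alpha|^{-d}\le|\alpha|^{-d}+1$, $|\hat\alpha|^{2-d}\le|\alpha|^{2-d}+1$ (valid once $r_n\ge 1$). Your write-up is slightly more explicit than the paper's about where the ``$+2$'' in $C^{(1)}$ comes from and about the finiteness of $C^{(1)}$ under \cref{ass:initial}(ii), but there is no substantive difference in the argument.
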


\begin{proof}
Let us denote a neuron by $Y^i (x) := \hat\beta_0^i \psi \left( \hat\alpha_0^i x + \hat c_0^i \right)$, then $V_0^n = \frac{1}{n^{\delta}} \sum_{i = 1}^n Y^i (x)$ and
\begin{equation}
\label{eq:initial_network_expectation}
\E \left[ \norm{V^n_0}_{\cH_0^1}^2 \right] = \E \left[ \int_{\mathbb{R}^d} \left| V^n_0 \right|^2 + \left| \nabla_x V^n_0 \right|^2 \ud x \right].
\end{equation}
Let us first compute the value of the cross terms, for $i \neq j$, which equals
\begin{align*}
\E \left[ \int_{\mathbb{R}^d} Y^i(x) Y^j(x) \ud x \right] 
    &= \E \left[ \int_{\mathbb{R}^d} \hat\beta_0^i \psi \left( \hat\alpha_0^i x + \hat c_0^i \right) \hat\beta_0^j \psi \left( \hat\alpha_0^j x + \hat c_0^j \right) \ud x \right] \\ 
    &= \int_{\mathbb{R}^d} \E \left[ \hat\beta_0^i \right] \E \left[ \hat\beta_0^j \right] \E \left[ \psi \left( \hat\alpha_0^i x + \hat c_0^i \right) \psi \left( \hat\alpha_0^j x + \hat c_0^j \right) \right] \ud x
    = 0;
\end{align*}
here, we can apply Fubini's theorem since $\psi$ has compact support and the parameters $\theta$ are bounded, while the random variables $\beta^i_0$ are symmetric, hence their expectation is zero.
Therefore, we can bound the $L^2$-norm of $V_0^n$ by
\begin{align}
\label{eq:bound0}
\E \left[ \int_{\mathbb{R}^d} \left| V^n_0 \right|^2 \ud x \right] 
    &= \frac{1}{n^{2 \delta}} \E \left[ \int_{\mathbb{R}^d} \sum_{i = 1}^n \abs{\hat\beta_0^i}^2 \abs{\psi \left( \hat\alpha_0^i x + \hat c_0^i \right)}^2 \ud x \right] \nonumber\\ 
    &= \frac{1}{n^{2 \delta-1}} \E \left[ \abs{\hat\beta_0^i}^2 \int_{\mathbb{R}^d} \abs{\hat\alpha_0^i}^{-d} \abs{\psi (y)}^2 \ud y \right] \nonumber\\ 
    &\leq \frac{1}{n^{2 \delta-1}} \E \left[ \abs{\hat\beta_0^i}^2 \right] \E \left[ \abs{\hat\alpha_0^i}^{-d} \right] \int_{\mathbb{R}^d} \abs{\psi(y)}^2 \ud y.
\end{align}
The derivative of $V^n_0$ can be expressed as
\[
 \nabla_x V^n_0 
    = \frac{1}{n^{\delta}} \sum_{i = 1}^n \nabla_x Y^i (x) 
    = \frac{1}{n^{\delta}} \sum_{i = 1}^n \hat\beta_0^i \hat\alpha_0^i \left( \nabla \psi \right) \left( \hat\alpha_0^i x + \hat c_0^i \right).
\]
Hence, we can analogously bound the $L^2$-norm of the derivative of $V^n_0$ by
\begin{align}
\label{eq:bound1}
\E \left[ \int_{\mathbb{R}^d} \left| \nabla_x V^n_0 \right|^2 \ud x \right] 
    &= \frac{1}{n^{2 \delta-1}}  \E \left[ \int_{\mathbb{R}^d} \abs{\hat\beta_0^i}^2 \abs{\hat\alpha_0^i}^{2} \abs{\left( \nabla \psi \right) \left( \hat\alpha_0^i x + \hat c_0^i \right)}^2 \ud x \right] \nonumber\\ 
    &\leq \frac{1}{n^{2 \delta-1}} \E \left[ \abs{\hat\beta_0^i}^2 \right] \E \left[ \abs{\hat\alpha_0^i}^{2-d} \right] \int_{\mathbb{R}^d} \abs{\nabla \psi (x)}^2 \ud x.
\end{align}
Applying bounds \eqref{eq:bound0} and \eqref{eq:bound1} to \eqref{eq:initial_network_expectation}, and using Jensen's inequality and that $\abs{\hat\beta_0^i} \leq \abs{\beta_0^i}$, yields
\begin{align*}
\E \left[ \norm{V^n_0}_{\cH_0^1} \right] 
    &\leq \sqrt{\E \left[ \norm{V^n_0}_{\cH_0^1}^2 \right] } \\ 
    &\leq \sqrt{ \frac{1}{n^{2 \delta-1}} \E \left[ \abs{\beta_0^i}^2 \right] \E \left[ \abs{\hat \alpha_0^i}^{-d} \right] \int_{\mathbb{R}^d} \abs{\psi}^2 \ud x + \frac{1}{n^{2 \delta-1}} \E \left[ \abs{\beta_0^i}^2 \right] \E \left[ \abs{\hat \alpha_0^i}^{2-d} \right] \int_{\mathbb{R}^d} \abs{\nabla \psi}^2 \ud x} \\ 
    &\leq n^{\frac12 - \delta} \E \left[ \abs{\beta_0^i}^2 \right]^{\frac{1}{2}} \left( \E \left[ \abs{\hat \alpha_0^i}^{-d} \right] + \E \left[ \abs{\hat \alpha_0^i}^{2-d} \right] \right)^{\frac{1}{2}} \sqrt{\int_{\mathbb{R}^d} \abs{\psi}^2 \ud x + \int_{\mathbb{R}^d} \abs{\nabla \psi}^2 \ud x}.
\end{align*}
If $\abs{\alpha} \leq r_n$, then $\abs{\hat \alpha}^{-1} \leq \abs{\alpha}^{-1}$ and if $\abs{\alpha} > r_n$, then $\abs{\hat \alpha}^{-1} = r_n^{-1}$. 
Therefore, for $d=1,2$, we have
\begin{align*}
\E \left[ \abs{\hat \alpha_0^i}^{-d} \right] + \E \left[ \abs{\hat \alpha_0^i}^{2-d} \right]
    &\le \E \left[ \abs{\alpha_0^1}^{-d} + \left(r_n\right)^{-d} + \abs{\alpha_0^1}^{2-d} \right],
\intertext{while for $d \ge 3$, we get}
\E \left[ \abs{\hat \alpha_0^i}^{-d} \right] + \E \left[ \abs{\hat \alpha_0^i}^{2-d} \right] 
    &\le   \E \left[ \abs{\alpha_0^1}^{-d} + \left(r_n\right)^{-d} + \abs{\alpha_0^1}^{2-d} + \left(r_n\right)^{2-d} \right]. 
\qedhere
\end{align*}
\end{proof}

\begin{proof}[Proof of \cref{thm:train_NN}]
Using \eqref{eq:dynamic_VN} and \eqref{eq:whole}, we need to estimate the following difference:
\begin{align*}
V^n_t (x) - V_t (x) 
    &= V^n_0 (x) - V_0 (x) + \int_0^t \Big\{ \ip{\mathcal{D} I^k \left( V_s \right), Z( x, \cdot)}_{\cH_0^1} - \ip{\mathcal{D}I^k \left( V^n_s \right), Z_s^n \left( x, \cdot\right)}_{\cH_0^1} \Big\} \, \ud s \\
    &= \underbrace{V^n_0 (x) - V_0 (x)}_{\left( a1 \right)} + \int_0^t \Big\{ \underbrace{\ip{\mathcal{D}I^k \left( V_s \right) - \mathcal{D}I^k \left( V^n_s \right), Z( x, \cdot)}_{\cH_0^1}}_{\left( a2 \right)} \\
    &\qquad+ \underbrace{\ip{\mathcal{D}I^k \left( V^n_s \right), Z( x, \cdot) - Z_0^n \left( x, \cdot\right)}_{\cH_0^1}}_{\left( a3 \right)} + \underbrace{\ip{\mathcal{D}I^k \left( V^n_s \right), Z^n_0 \left( x, \cdot\right) - Z_s^n \left( x, \cdot\right)}_{\cH_0^1}}_{\left( a4 \right)} \Big\} \, \ud s.
\end{align*}
The proof is now separated in several steps, corresponding to the estimation of each of the terms above.

\noindent\textit{Step (a1).}
We know that $V_0(x)=0$ by definition, hence, using \cref{lem:v0}, we get 
\[
    \E \left[ \norm{V^n_0 (x) - V_0 (x)}_{\cH_0^1} \right] \le C^{\left( 1 \right)}n^{\frac{1}{2} - \delta}. 
\]

\noindent\textit{Step (a2).}
In order to separate the random terms $V$ and $X$, we define another probability space $\Omega'$ and probability measure $\p'$ such that $X\left(\omega'\right)$ under $\p'$ has the same distribution as $X$ under $\p.$ 
Then, we can rewrite $(a2)$ as follows
\begin{align*}
(a2)
    &= \ip{\mathcal{D}I^k \left( V_s \right) - \mathcal{D}I^k \left( V^n_s \right), \E\left[X\cdot X\left(x\right)\right]}_{\cH_0^1} \\
    &= \int_{\Omega'}\ip{\mathcal{D}I^k \left( V_s \right) - \mathcal{D}I^k \left( V^n_s \right), X\left(\omega'\right)}_{\cH_0^1} X \left(x\right)\left(\omega'\right) \p' \left( \ud \omega' \right). 
\end{align*}
Hence, we can bound the $\cH_0^1$-norm of (a2) by
\begin{align*}
\norm{(a2)}_{\cH_0^1} 
    &\le \int_{\Omega'} \norm{\ip{\mathcal{D}I^k \left( V_s \right) - \mathcal{D}I^k \left( V^n_s \right), X\left(\omega'\right)}_{\cH_0^1} \cdot X\left(x\right)\left(\omega'\right)}_{\cH_0^1} \p \left( \ud \omega' \right) \\
    &= \int_{\Omega'}\abs{\ip{\mathcal{D}I^k \left( V_s \right) - \mathcal{D}I^k \left( V^n_s \right), X\left(\omega'\right)}_{\cH_0^1}} \norm{ X\left(\omega'\right)}_{\cH_0^1} \p \left( \ud \omega' \right) \\
    &\hspace{-1.35em}\overset{\text{\cref{lem:con_frechet}}}{\le} K \norm{V_s - V_s^n}_{\cH_0^1} \int_{\Omega'} \norm{ X\left(\omega'\right)}_{\cH_0^1}^2 \p \left( \ud \omega' \right) \\
    &\hspace{-1.35em}\overset{\text{\cref{lem:bound_XH}}}{=} K \norm{V_s - V_s^n}_{\cH_0^1} \E\left[\norm{ X}_{\cH_0^1}^2\right] \\
    &\le C_\psi \norm{V_s - V_s^n}_{\cH_0^1}.
\end{align*}
Here, $K$ is the constant from \cref{lem:con_frechet} and $C_\psi$ is another constant that depends on the activation function $\psi$ and may change from line to line.



\noindent\textit {Step (a3).}
Let us rewrite the term $\left(a3\right)$ as follows:
\begin{align*}
\left(a3\right)
    &= \underbrace{\frac{1}{n}\sum_{i=1}^n \ip{\mathcal{D}I^k \left( V^n_s \right), X^{i,n}_0\cdot X^{i,n}_0\left(x\right)-\E\left[X^{i,n}_0\cdot X^{i,n}_0\left(x\right)\right]}_{\cH_0^1}}_{\left(a3.1\right)}\\
    &\quad+ \underbrace{\ip{\mathcal{D}I^k \left( V^n_s \right), \E\left[X^{n}\cdot X^{n}\left(x\right)-X\cdot X\left(x\right)\right]}_{\cH_0^1}}_{\left(a3.2\right)},
\end{align*}
where we can use $X^n $ instead of $X^{i,n}_0$ in the second term since, by \cref{ass:initial}, these two terms have the same expectation.
Then, we can further separate the term $\left(a3.2\right)$ as follows:
\begin{align*}
\left(a3.2\right)
    = \underbrace{\ip{\mathcal{D}I^k \left( V^n_s \right), \E\left[\left(X^n-X\right)\cdot X^n\left(x\right)\right]}_{\cH_0^1}}_{\left(a3.21\right)}
    +\underbrace{\ip{\mathcal{D}I^k \left( V^n_s \right), \E\left[X \cdot \left( X^n\left(x\right)-X\left(x\right) \right)\right]}_{\cH_0^1}}_{\left(a3.22\right)}.
\end{align*}
A similar computation as for the term $(a2)$, yields
\begin{align*}
\norm{\left(a3.21\right)}_{\cH_0^1}
    &\le \int_{\Omega'}\abs{\ip{ \mathcal{D}I^k \left( V^n_s \right), X^n\left(\omega'\right)-X\left(\omega'\right)}_{\cH_0^1}}\norm{ X^n\left(\omega'\right)}_{\cH_0^1} \p \left( \ud \omega' \right) \\
    &\hspace{-1.35em}\overset{\text{\cref{lem:con_frechet}}}{\le} K \left( 1 + \norm{V_s^n}_{\cH_0^1} \right) \int_{\Omega'} \norm{ X^n \left( \omega' \right) -X \left( \omega' \right)}_{\cH_0^1} \norm{ X^n \left( \omega' \right)}_{\cH_0^1} \p \left( \ud \omega' \right) \\
    &= K \left( 1 + \norm{V_s^n}_{\cH_0^1} \right) \E \left[ \norm{ X^n - X}_{\cH_0^1} \norm{X^n}_{\cH_0^1} \right]. 
\end{align*}
Analogously, we have that
\[
    \norm{\left(a3.22\right)}_{\cH_0^1} \le K \left( 1 + \norm{V_s^n}_{\cH_0^1} \right) \E \left[ \norm{X^n - X}_{\cH_0^1} \norm{X}_{\cH_0^1} \right].
\]
Overall, combining the two bounds and then using \cref{lem:EVtN}, then the Cauchy--Schwarz inequality, and finally \cref{lem:bound_XH,lem:epN}, we arrive at
\begin{align*}
\E\left[\norm{\left(a3.2\right)}_{\cH_0^1}\right] 
    &\le 2K \E \left[ 1 + \norm{V_s^n}_{\cH_0^1} \right] \E \left[ \norm{X^n - X}_{\cH_0^1} \left(\norm{ X^n}_{\cH_0^1} + \norm{X}_{\cH_0^1} \right) \right] \\
    &\le C_\psi \E \left[  \norm{X^n - X}_{\cH_0^1} \left( \norm{ X^n}_{\cH_0^1} + \norm{X}_{\cH_0^1} \right) \right] \\
    &\le  C_\psi \E \left[ \norm{ X^n-X}_{\cH_0^1}^2 \right]^{\frac{1}{2}} \E \left[ \norm{ X^n}^2_{\cH_0^1} + \norm{ X}^2_{\cH_0^1} \right]^{\frac{1}{2}} 
     \le C_\psi \, \e_n^{\frac{1}{2}},
\end{align*}
where $\e_n$ is defined in \cref{lem:epN} and equals
\[
    \e_n = \E \left[ \norm{ X^n-X}_{\cH_0^1}^2 \right].
\]


On the other hand, using \cref{lem:con_frechet} the norm of $\left(a3.1\right)$ can be bounded by
\begin{align*}
\norm{\left(a3.1\right)}_{L^2} 
    &= \frac{1}{n} \left( \int_{\R^d} \ip{\mathcal{D} I^k \left( V^n_s \right), \sum_{i=1}^n \left( X^{i,n}_0 \cdot X^{i,n}_0 \left(x\right) - \E \left[ X^{i,n}_0 \cdot X^{i,n}_0\left( x \right) \right] \right)}^2_{\cH_0^1} \ud x \right)^{\frac{1}{2}} \\
    &\le \frac{K}{n} \left( 1 + \norm{V^n_s}_{\cH_0^1} \right) \left( \int_{\R^d} \norm{\sum_{i=1}^n \left( X^{i,n}_0 \cdot X^{i,n}_0 \left(x\right) - \E \left[ X^{i,n}_0 \cdot X^{i,n}_0\left(x\right) \right] \right)}^2_{\cH_0^1} \ud x \right)^{\frac{1}{2}}.
\end{align*}
Then, using the Cauchy--Schwarz inequality and \cref{lem:EVtN}, we get that
\begin{align*}
\E \left[ \norm{\left(a3.1\right)}_{L^2} \right]
    &\le \frac{K}{n} \E \left[ \left( 1+ \norm{V^n_s}_{\cH_0^1} \right)^2 \right]^{\frac{1}{2}} \E \left[ \int_{\R^d} \norm{\sum_{i=1}^n \left( X^{i,n}_0 \cdot X^{i,n}_0\left(x\right) - \E \left[ X^{i,n}_0 \cdot X^{i,n}_0\left(x\right) \right] \right)}^2_{\cH_0^1} \ud x \right]^{\frac{1}{2}} \\
    &\leq \frac{C_\psi}{n} \E \left[ \int_{\R^d} \norm{\sum_{i=1}^n \left( X^{i,n}_0 \cdot X^{i,n}_0\left(x\right) - \E \left[ X^{i,n}_0 \cdot X^{i,n}_0\left(x\right) \right] \right)}^2_{\cH_0^1} \ud x \right]^{\frac{1}{2}} \\
    &= \frac{C_\psi}{\sqrt{n}} \E \left[ \int_{\R^d} \norm{ X^n \cdot X^n\left(x\right) - \E \left[ X^n \cdot X^n\left(x\right) \right]}^2_{\cH_0^1} \ud x \right]^{\frac{1}{2}},
\end{align*}
where the last equality follows because $X^n$ and $X^{i,n}_0$ are equally distributed by \cref{ass:initial}, while $X^{i,n}\cdot X^{i,n}\left(x\right)$ are i.i.d. variables that satisfy
\[
\E \left[ \ip{X^{i,n} \cdot X^{i,n}\left(x\right), X^{j,n} \cdot X^{j,n}\left(x\right)}_{\cH_0^1} \right] = 0, \quad \text{for } i \ne j.
\] 
Using the triangle inequality, we have
\begin{align*}
&\E \left[ \int_{\R^d} \norm{X^n \cdot X^n\left(x\right) - \E \left[ X^n \cdot X^n\left(x\right) \right]}^2_{\cH_0^1} \ud x \right] \\
    &\qquad\qquad \le 2 \E \left[ \int_{\R^d} \norm{X^n \cdot X^n\left(x\right)}^2_{\cH_0^1} \ud x \right] + 2\int_{\R^d} \norm{ \E \left[ X^n \cdot X^n\left(x\right) \right]}^2_{\cH_0^1} \ud x \\
    &\qquad\qquad \le 2 \E \left[ \norm{X^n}^2_{\cH_0^1} \norm{X^n}^2_{L^2} \right] + 2 \E \left[ \norm{X^n}^2_{\cH_0^1} \right] \E \left[ \norm{X^n}_{L^2}^2 \right].
\end{align*}
Then, combining the last two inequalities, we arrive at
\begin{align*}
\E\left[\norm{\left(a3.1\right)}_{L^2} \right] 
    \le \frac{C_\psi}{\sqrt{n}} \left( \E \left[ \norm{X^n}^2_{\cH_0^1} \norm{X^n}^2_{L^2} \right]^{\frac{1}{2}} + \E \left[ \norm{X^n}^2_{\cH_0^1} \right]^{\frac{1}{2}} \E \left[ \norm{X^n}^2_{L^2} \right]^{\frac{1}{2}} \right).
\end{align*}
We can analogously estimate the term $\E\left[\norm{\nabla_x\left(a3.1\right)}_{L^2} \right]$ and deduce that 
\begin{align*}
\E\left[\norm{\left(a3.1\right)}_{\cH_0^1}\right] 
    &\le \frac{C_\psi}{\sqrt{n}}\left( \E \left[ \norm{X^n}^4_{\cH_0^1} \right]^{\frac{1}{2}} + \E \left[ \norm{X^n}^2_{\cH_0^1} \right] \right) 
     \le \frac{C_\psi}{\sqrt{n}} \E \left[ \norm{X^n}^4_{\cH_0^1} \right]^{\frac{1}{2}} \\
    &\hspace{-1.35em}\overset{\text{\cref{lem:boundXN}}}{\le} \frac{C_\psi\left(r_n\right)^{8+d}}{\sqrt{n}}.
\end{align*}
Overall, we finish this step by concluding that 
\[
    \E\left[ \norm{\left(a3\right)}_{\cH_0^1} \right]\le C_\psi\left(  \frac{\left(r_n\right)^{8+d}}{\sqrt{n}}+\e_n^{\frac{1}{2}}\right).
\]


\noindent \textit{Step (a4).}
Recalling the definition of $Z_t^n,$ we have
\begin{align*}
\left(a4\right) 
    &= \frac{1}{n}\sum_{i=1}^n \ip{\mathcal{D}I^k \left( V^n_s \right), X^{i,n}_0 }_{\cH_0^1}\cdot X^{i,n}_0 (x) - \frac{1}{n}\sum_{i=1}^n \ip{\mathcal{D}I^k \left( V^n_s \right), X_s^{i,n} }_{\cH_0^1}\cdot X_s^{i,n} (x) \\
    &= \frac{1}{n}\sum_{i=1}^n \ip{\mathcal{D}I^k \left( V^n_s \right), X^{i,n}_0 - X_s^{i,n}}_{\cH_0^1}\cdot X^{i,n}_0 \left(x\right) \\
    &\qquad - \frac{1}{n} \sum_{i=1}^n \ip{\mathcal{D}I^k \left( V^n_s \right), X_s^{i,n}}_{\cH_0^1} \cdot \left(X_s^{i,n}\left(x\right) - X^{i,n}_0 \left(x\right) \right).
\end{align*}
Using first \cref{lem:con_frechet} and then \cref{lem:boundXN,lem:lip_Xtheta}, we get
\begin{align*}
\norm{\left(a4\right)}_{\cH_0^1} 
    &\le \frac{K}{n} \left( 1 + \norm{V_s^n}_{\cH_0^1} \right)\sum_{i=1}^n \left( \norm{X_s^{i,n}}_{\cH_0^1} + \norm{X^{i,n}}_{\cH_0^1} \right) \norm{X_s^{i,n} - X^{i,n}}_{\cH_0^1} \\
   &\le \frac{C_\psi}{n} \left(r_n\right)^{8 + d} \left( 1 + \norm{V_s^n}_{\cH_0^1} \right) \sum_{i=1}^n \abs{\theta_s^{i,n} - \theta_0^i}^{\frac{1}{2}}.
\end{align*}
Hence we can bound its expectation by
\begin{align*}
\E \left[ \norm{\left(a4\right)}_{\cH_0^1} \right] 
    &\le \frac{C_\psi}{n} \left(r_n\right)^{8 + d} \E \left[ \left( \norm{V_s^n}_{\cH_0^1}+1 \right)^2 \right]^{\frac{1}{2}} \E \left[ \left( \sum_{i=1}^n \abs{\theta_s^{i,n} - \theta_0^i}^{\frac{1}{2}} \right)^2 \right]^{\frac{1}{2}} \\
    &\le C_\psi n^{-\frac{1}{2}}\left(r_n\right)^{8 + d} \E \left[ \left( \norm{V_s^n}_{\cH_0^1}+1 \right)^2 \right]^{\frac{1}{2}} \E \left[ \abs{\theta_s^{1,n} - \theta_0^1} \right]^{\frac{1}{2}} \\
    &\hspace{-1.95em}\overset{\text{Lem. \ref{lem:EVtN} \& \ref{lem:thetat0}}}{\le}  C_\psi \sqrt{t}n^{\frac{\delta}{2}-1}\left(r_n\right)^{10 + 2d} .
\end{align*}

\noindent\textit{Final step.}
Combining the previous steps, we arrive at 
\begin{align*}
\E \left[ \norm{ V^n_t- V_t}_{\cH_0^1} \right] 
    &\le \E \left[ \norm{\left(a1\right)}_{\cH_0^1} \right] + \E \left[ \norm{ \int_0^t (a2) + \left(a3\right) + \left(a4\right) \ud s}_{\cH_0^1} \right] \\
    &\le \E \left[ \norm{\left(a1\right)}_{\cH_0^1} \right] +  \int_0^t \E \left[\norm{ (a2)}_{\cH_0^1}\right] + \E \left[\norm{ \left(a3\right)}_{\cH_0^1}\right] + \E \left[\norm{ \left(a4\right)}_{\cH_0^1}\right] \ud s  \\
    &\le M \int_0^t \E \left[ \norm{ V^n_s- V_s}_{\cH_0^1} \right] \ud s \\
    &\qquad + C^{\left(1\right)} n^{\frac{1}{2} - \delta} + T^{\frac{3}{2}} C_\psi \left( \frac{\left(r_n\right)^{8+d}}{\sqrt{n}} + \e_n^{\frac{1}{2}} + n^{\frac{\delta}{2}-1}\left(r_n\right)^{10 + 2d} \right).
\end{align*}
Hence, using Gr\"onwall's inequality, we conclude
\begin{align*}
\E \left[ \norm{ V^n_t- V_t}_{\cH_0^1} \right] 
    &\le \ue^{M T} \left( C^{\left(1\right)} n^{\frac{1}{2} - \delta} + T^{\frac{3}{2}} C_\psi \left( \frac{\left(r_n\right)^{8+d}}{\sqrt{n}} + \e_n^{\frac{1}{2}} + n^{\frac{\delta}{2}-1}\left(r_n\right)^{10 + 2d} \right) \right) \\
    & \xrightarrow[n \to \infty]{} 0. \qedhere  
\end{align*}
\end{proof}


\subsection{Long time behavior of the gradient flow}

In this subsection, we prove that the wide network limit of the trained neural network, \textit{i.e.} the process $V_t$ defined in \eqref{eq:whole}, converges to the global minimizer $w_*$ of the loss function $I^k$ of the DGFMs, as the training time $t\to\infty$.
This result, combined with the convergence of the time-stepping scheme, then proves the convergence of the training error.

\begin{theorem}
\label{thm:whole}
Assume that the neural network satisfies \cref{ass:initial} and the coefficients of the PDE \eqref{eq:general_operator}--\eqref{eq:operator} satisfy \cref{ass:bound,ass:ellip}.
Then, we have 
    \begin{align*}
        \lim_{t \to \infty} \norm{V_t - w_*}_{\cH_0^1} = 0.
    \end{align*}
\end{theorem}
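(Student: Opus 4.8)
The plan is to exploit the linearity hidden in the gradient flow \eqref{eq:whole}. Because $\cL$ is linear and $F\big(U^{k-1}\big)$ is a \emph{fixed} element, the map $v\mapsto\D I^k(v)$ defined by \eqref{eq:frechet-loss} is \emph{affine}: there is a bounded linear operator $\mathcal A\colon\cH_0^1(\R^d)\to\cH_0^1(\R^d)$, characterized by
\begin{align*}
\ip{\mathcal A v,u}_{\cH_0^1}=\ip{v,u}_{L^2}+h\ip{\cL v,u}_{\cH^{-1},\cH_0^1},
\end{align*}
together with a fixed $b\in\cH_0^1(\R^d)$, such that $\D I^k(v)=\mathcal A v+b$. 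By \cref{ass:bound} the operator $\mathcal A$ is bounded, it is self-adjoint because $\cL$ is (\cref{ass:SA-PD}), and by \cref{ass:ellip} together with the smallness of $h$ (as in the proof of \cref{thm:var_to_pde}) it satisfies $\ip{\mathcal A v,v}_{\cH_0^1}\ge h\lambda_1\norm{v}_{\cH_0^1}^2$, hence it is boundedly invertible. Since $w_*$ is the minimizer of the strictly convex functional $I^k$ we have $\D I^k(w_*)=0$, i.e.\ $b=-\mathcal A w_*$. Writing $\mathcal Z$ for the integral operator on $\cH_0^1(\R^d)$ with kernel $Z(\cdot,\cdot)$, so that $\T(v)=\mathcal Z\,\D I^k(v)$, the flow \eqref{eq:whole} becomes the \emph{linear} ODE
\begin{align*}
\dot W_t=-\mathcal Z\mathcal A\,W_t,\qquad W_t:=V_t-w_*,\qquad W_0=-w_*,
\end{align*}
whose solution, globally defined since the generator is bounded, is $W_t=\ue^{-t\mathcal Z\mathcal A}W_0$.

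The second step is to collect the properties of $\mathcal Z$. Viewing $\mathcal Z=\E\!\left[X\otimes X\right]$ as a Bochner integral of rank-one nonnegative operators, it is self-adjoint and nonnegative, and trace class on $\cH_0^1(\R^d)$ with $\operatorname{tr}\mathcal Z=\E\!\left[\norm{X}_{\cH_0^1}^2\right]<\infty$ by \cref{lem:bound_XH}; in particular $\mathcal Z$ is compact. The essential point is that $\mathcal Z$ is \emph{injective}. Indeed, if $\ip{g,\mathcal Z g}_{\cH_0^1}=\E\!\left[\,\abs{\ip{g,X}_{\cH_0^1}}^2\right]=0$, then $\ip{g,X}_{\cH_0^1}=0$ almost surely, and inspecting the component of $X$ corresponding to $\partial_\beta$ gives $\ip{g,\psi(\alpha_0^1\cdot+c_0^1)}_{\cH_0^1}=0$ almost surely. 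Since $\psi\in C_c^\infty(\R^d)$, the map $(\alpha,c)\mapsto\psi(\alpha\,\cdot+c)$ is continuous into $\cH_0^1(\R^d)$, so by the full-support hypothesis \cref{ass:initial}(iv) the identity $\ip{g,\psi(\alpha\,\cdot+c)}_{\cH_0^1}=0$ in fact holds for \emph{every} $\alpha\ne0$ and every $c$; hence $\ip{g,\zeta}_{\cH_0^1}=0$ for every $\zeta\in\mathcal C(\psi)$, and \cref{thm:neural app} forces $g=0$.

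The third step turns injectivity of $\mathcal Z$ into convergence of the flow. Equip $\cH_0^1(\R^d)$ with the inner product $\ip{u,v}_{\mathcal A}:=\ip{\mathcal A u,v}_{\cH_0^1}$, equivalent to the original one because $h\lambda_1 I\le\mathcal A\le\norm{\mathcal A}I$. A direct computation shows that $\mathcal A\mathcal Z\mathcal A$ is self-adjoint and nonnegative on $\cH_0^1(\R^d)$, which is precisely the statement that $\mathcal Z\mathcal A$ is self-adjoint and nonnegative with respect to $\ip{\cdot,\cdot}_{\mathcal A}$; moreover $\mathcal Z\mathcal A$ is compact (being $\mathcal Z$ post-composed with a bounded operator) and injective, since $\mathcal Z\mathcal A u=0$ implies $\mathcal A u\in\ker\mathcal Z=\{0\}$ and hence $u=0$. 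For a bounded nonnegative self-adjoint operator with trivial kernel, the spectral measure carries no mass at $0$, so $\ue^{-t\mathcal Z\mathcal A}\to0$ strongly as $t\to\infty$ by dominated convergence against the spectral measure; therefore $\norm{V_t-w_*}_{\cH_0^1}=\norm{W_t}_{\cH_0^1}\to0$. As a consistency check, the same structure yields $I^k(V_t)\downarrow I^k(w_*)$, since $\tfrac{\ud}{\ud t}I^k(V_t)=-\E\!\left[\,\abs{\ip{\D I^k(V_t),X}_{\cH_0^1}}^2\right]\le0$ and the limit must be the minimum by injectivity of $\mathcal Z$, recovering the conclusion through \cref{thm:5}.

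I expect the main obstacle to be the injectivity (strict positive-definiteness) of the kernel operator $\mathcal Z$ carried out in the second step: this is where the full-support assumption \cref{ass:initial}(iv) and the tailor-made universal approximation theorem \cref{thm:neural app} are genuinely needed. It is also what forbids a naive Polyak--\L ojasiewicz shortcut — since $\mathcal Z$ is merely compact and has no spectral gap on $\cH_0^1(\R^d)$, a coercivity estimate at a single training time is unavailable, and one is forced to use the linear structure of \eqref{eq:whole} globally in $t$.
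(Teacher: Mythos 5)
Your proof is correct and takes essentially the same route as the paper: both introduce the modified inner product $\ip{u,v}_{L^2}+h\ip{\cL u,v}_{\cH^{-1},\cH_0^1}$ (the paper's $\ip{\cdot,\cdot}_{\widetilde\cH_0^1}$ is exactly your $\ip{\cdot,\cdot}_{\mathcal A}$), show that the gradient-flow generator is self-adjoint, trace class (hence compact), and strictly positive definite with respect to it — with the injectivity step hinging, as you correctly identify, on \cref{ass:initial}(iv) together with the tailored UAT of \cref{thm:neural app} — and then conclude by spectral theory. Your factorization of the generator as $\mathcal Z\mathcal A$ and the spectral-measure/dominated-convergence phrasing of $\ue^{-t\mathcal Z\mathcal A}\to 0$ are a slightly more modular packaging of the paper's explicit eigenbasis computation $h_t^i=\ue^{-\gamma_i t}h_0^i$, but the underlying argument is identical; note also that both you and the paper's proof quietly rely on \cref{ass:SA-PD} (for symmetry of the modified inner product and for $\D I^k(w_*)=0$) even though the theorem statement lists only \cref{ass:bound,ass:ellip}, so your explicit invocation of it is the more careful bookkeeping.
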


Let us start by rewriting the dynamics of the gradient flow $V$ in \eqref{eq:whole} as follows:
\begin{align}
\label{eq:whole_new}
\frac{\ud \left(V_t - w_*\right) (x)}{\ud t} 
    = \frac{\ud \left(V_t \right) (x)}{\ud t} 
    = - \ip{\mathcal{D} I^k \left( V_t - w_* + w_* \right), Z( x, \cdot)}_{\cH_0^1} 
    = -\widetilde \T\left(V_t - w_*\right)\left(x\right),
\end{align}
where
\begin{align*}
\widetilde \T\left(v\right) 
    &:= \T\left(v + w_*\right) \\
    &= \ip{v+w_*-U^{k-1},Z(x,\cdot)}_{L^2} + h \ip{\cL (v+w_*),Z(x,\cdot)}_{\cH^{-1},\cH_0^1} + h \ip{F \left( U^{k - 1} \right),Z(x,\cdot)}_{L^2} \\
    &= \ip{v,Z(x,\cdot)}_{L^2} + h \ip{\cL v,Z(x,\cdot)}_{\cH^{-1},\cH_0^1} + \ip{w_* - U^{k-1} + h \left(\cL w_* + F \left( U^{k - 1} \right) \right) ,Z(x,\cdot)}_{L^2} \\
    &= \ip{v,Z(x,\cdot)}_{L^2} + h \ip{\cL v,Z(x,\cdot)}_{\cH^{-1},\cH_0^1}.
\end{align*}
We work with $\widetilde\T$ in the sequel, because $\T$ is not linear $\left( \T\left(0\right)\ne 0 \right)$.
Next, let us define another inner product, such that $\widetilde \T$ becomes positive semi-definite. 
Indeed, for any $u, v \in \cH_0^1\left(\R^d\right),$ set
\[
    \ip{v,u}_{\widetilde \cH_0^1} :=  \ip{v,u}_{L^2} + h \ip{\cL v,u}_{\cH^{-1},\cH_0^1},
\]
then, using \cref{ass:bound,ass:ellip}, we have
\[
\norm{u}_{\widetilde \cH_0^1}^2 
    = \ip{u,u}_{\widetilde \cH_0^1} 
    = \ip{u,u}_{L^2} + h \ip{\cL u,u}_{\cH^{-1},\cH_0^1} 
    \begin{cases} 
        \leq (1 + h M) \norm{u}_{\cH_0^1}^2 \\
        \geq h \lambda_1 \norm{u}_{\cH_0^1}^2 + (1 - h \lambda_2) \norm{u}_{L^2}^2 \geq h \lambda_1 \norm{u}_{\cH_0^1}^2.
    \end{cases}
\]
Hence, this inner product induces a norm on $\cH_0^1\left(\R^d\right)$, denoted by $\norm{\cdot}_{\widetilde \cH_0^1}$, which is equivalent to $\norm{\cdot}_{\cH_0^1}$. 
In this case, we can rewrite $\widetilde \T\left(v\right)\left(x\right)= \ip{v,Z(x,\cdot)}_{\widetilde \cH_0^1}$.

\begin{proposition}
Assume that the neural network satisfies \cref{ass:initial} and the coefficients of the PDE \eqref{eq:general_operator}--\eqref{eq:operator} satisfy \cref{ass:bound}.
Then, $\widetilde\T$ is a self-adjoint, positive definite, and trace class operator on $\cH^1_0\left(\R^d\right)$ with inner product $\ip{\cdot,\cdot}_{\widetilde \cH_0^1}$ \emph{i.e.}, for any $u,v \in \cH_0^1\left(\R^d\right)$ holds
\begin{align*}
\ip{\widetilde \T \left(v\right), u}_{\widetilde\cH^1_0}= \ip{ v,\widetilde \T (u)}_{\widetilde\cH^1_0},
    \quad \ip{\widetilde \T \left(v\right), v}_{\widetilde\cH^1_0} > 0 \ \text{ for } v \ne 0, 
    \quad \text{ and }  \quad
    \sum_{i=1}^\infty \ip{\widetilde \T \left(\ue_i\right), \ue_i}_{\widetilde\cH^1_0} < +\infty,
\end{align*}
where $\{ \ue_i \}^\infty_{i=1}$ is an orthogonal basis on $\cH_0^1 (\R^d)$ under the norm $\ip{\cdot,\cdot}_{\widetilde \cH_0^1}$.
\end{proposition}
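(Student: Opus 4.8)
\textit{Proof proposal.} The plan is to exhibit $\widetilde\T$ as an average of rank-one operators and read off all three properties from that representation. Recall that $X=\mathcal X(\theta^1_0;\cdot)$ is an $\R^{2+d}$-valued random element of $\cH_0^1(\R^d)$ and $Z(x,y)=\E[X(x)\cdot X(y)]$, so that $\widetilde\T(v)(x)=\ip{v,Z(x,\cdot)}_{\widetilde\cH_0^1}$. The first step is to establish, for every $v\in\cH_0^1(\R^d)$, the identity $\widetilde\T(v)=\E[\ip{v,X}_{\widetilde\cH_0^1}\,X]$, interpreted as a Bochner integral of the $\cH_0^1(\R^d)$-valued random variable $\ip{v,X}_{\widetilde\cH_0^1}\cdot X$, where $\ip{v,X}_{\widetilde\cH_0^1}\in\R^{2+d}$ collects the componentwise inner products. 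All such interchanges of $\E$ with $\ip{\cdot,\cdot}_{\widetilde\cH_0^1}$ are justified by Fubini's theorem, since $\E[\norm{X}_{\cH_0^1}^2]<+\infty$ by \cref{lem:bound_XH} and $\norm{\cdot}_{\widetilde\cH_0^1}$ is equivalent to $\norm{\cdot}_{\cH_0^1}$ as recorded above; in particular this already shows that $\widetilde\T$ is a bounded operator from $\cH_0^1(\R^d)$ into itself. Pairing the identity with $u\in\cH_0^1(\R^d)$ yields
\[
\ip{\widetilde\T(v),u}_{\widetilde\cH_0^1}=\E\big[\ip{v,X}_{\widetilde\cH_0^1}\cdot\ip{u,X}_{\widetilde\cH_0^1}\big],
\]
which is symmetric in $u$ and $v$, hence $\widetilde\T$ is self-adjoint, and which for $u=v$ equals $\E\big[\abs{\ip{v,X}_{\widetilde\cH_0^1}}^2\big]\ge0$, hence $\widetilde\T$ is positive semi-definite.

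The delicate point, which I expect to be the main obstacle, is strict positivity. Suppose $\ip{\widetilde\T(v),v}_{\widetilde\cH_0^1}=0$; then $\ip{v,X}_{\widetilde\cH_0^1}=0$ $\p$-almost surely, and inspecting the component of $X$ corresponding to the parameter $\beta$ gives $\ip{v,\psi(\alpha^1_0\,\cdot+c^1_0)}_{\widetilde\cH_0^1}=0$ $\p$-a.s. The map $(\alpha,c)\mapsto\psi(\alpha\,\cdot+c)$ is continuous from $(\R\setminus\{0\})\times\R^d$ into $\cH_0^1(\R^d)$ (dominated convergence, using $\psi\in C_c^\infty$), so, composing with the continuous functional $\ip{v,\cdot}_{\widetilde\cH_0^1}$, the function $(\alpha,c)\mapsto\ip{v,\psi(\alpha\,\cdot+c)}_{\widetilde\cH_0^1}$ is continuous; vanishing $\p$-a.s. together with the full-support hypothesis in \cref{ass:initial} forces it to vanish at every $(\alpha,c)$ with $\alpha\ne0$. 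Consequently $\ip{v,\zeta}_{\widetilde\cH_0^1}=0$ for all $\zeta\in\mathcal C(\psi)$, and the density of $\mathcal C(\psi)$ in $\cH_0^1(\R^d)$ from \cref{thm:neural app}, together with the continuity of $\ip{v,\cdot}_{\widetilde\cH_0^1}$, yields $\ip{v,\cdot}_{\widetilde\cH_0^1}\equiv0$, i.e. $v=0$. Transferring the almost-sure vanishing through the full-support assumption to a statement about all dilates/translates of $\psi$, and then invoking the tailored universal approximation theorem, is the crux of the argument.

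Finally, for the trace-class property, let $\{\ue_i\}_{i\ge1}$ be an orthonormal basis of $\big(\cH_0^1(\R^d),\ip{\cdot,\cdot}_{\widetilde\cH_0^1}\big)$ as in the statement. Using the formula $\ip{\widetilde\T(\ue_i),\ue_i}_{\widetilde\cH_0^1}=\E\big[\abs{\ip{\ue_i,X}_{\widetilde\cH_0^1}}^2\big]$ from the first step, Tonelli's theorem (all summands are non-negative), and Parseval's identity applied to each of the $2+d$ components of $X$, one obtains
\[
\sum_{i=1}^\infty\ip{\widetilde\T(\ue_i),\ue_i}_{\widetilde\cH_0^1}=\E\Big[\sum_{i=1}^\infty\abs{\ip{\ue_i,X}_{\widetilde\cH_0^1}}^2\Big]=\E\big[\norm{X}_{\widetilde\cH_0^1}^2\big]\le(1+hM)\,\E\big[\norm{X}_{\cH_0^1}^2\big]<+\infty,
\]
where the last bound uses the norm equivalence and \cref{lem:bound_XH}. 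This exhausts the three assertions.
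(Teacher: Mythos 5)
The proposal is correct and takes essentially the same route as the paper: the rank-one representation $\ip{\widetilde\T(v),u}_{\widetilde\cH_0^1}=\E\big[\ip{v,X}_{\widetilde\cH_0^1}\cdot\ip{u,X}_{\widetilde\cH_0^1}\big]$ for self-adjointness and positivity, full support plus density of $\mathcal C(\psi)$ for strict positivity, and Parseval together with \cref{lem:bound_XH} for the trace bound. You spell out one step the paper leaves implicit, namely the continuity of $(\alpha,c)\mapsto\ip{v,\psi(\alpha\,\cdot+c)}_{\widetilde\cH_0^1}$ used to upgrade almost-sure vanishing to vanishing everywhere, which is a welcome clarification rather than a deviation.
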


\begin{proof}
Let us first verify that $\widetilde{\mathcal{T}}$ is self-adjoint and positive definite. 
Using that 
\[
    \widetilde \T \left(v\right)\left(x\right) 
        = \ip{v, Z(x,\cdot)}_{\widetilde \cH_0^1} 
        = \E \left[ \ip{v,X}_{\widetilde \cH_0^1} \cdot X \right],
\]
taking the inner product yields
\begin{align*}
\ip{\widetilde \T \left(v\right), u}_{\tilde \cH^1_0} 
    &= \E \left[ \ip{v, X}_{\widetilde \cH_0^1}\cdot \ip{u, X}_{\widetilde \cH_0^1} \right]
     = \ip{ v,\widetilde \T (u)}_{\widetilde\cH^1_0}, 
\intertext{ and}
\ip{\widetilde \T \left(v\right), v}_{\tilde \cH^1_0} 
    &= \E \left[ \abs{\ip{v, X}_{\widetilde \cH_0^1}}^2 \right] \ge 0.
\end{align*}
Next, let us verify that $\E \left[ \abs{\ip{v,X}_{\widetilde \cH_0^1}}^2 \right] = 0$ only if $v = 0$. 
The first marginal of $X$ is $\psi \left( \alpha_0^1 x + c_0^1 \right)$ and we know from \cref{ass:initial} that the random variables $\alpha_0^1$ and $c_0^1$ have full support. 
Hence, $\E \left[ \abs{\ip{v,X}_{\widetilde \cH_0^1}}^2 \right] = 0$ implies $\ip{v,w}_{\widetilde \cH_0^1}=0$ for any $w\in \mathcal{C}\left(\psi\right)$. 
Using that $\mathcal{C}\left(\psi\right)$ is dense in $\cH_0^1\left(\R^d\right)$ with the norm $\norm{\cdot}_{\cH_0^1}$, see \cref{thm:neural app}, then it is dense with the norm $\norm{\cdot}_{\widetilde\cH_0^1}$ as well. 
Therefore, $v=0.$ 

Finally, let us show that $\widetilde{\mathcal{T}}$ is a trace class operator. 
Using Parseval's identity and \cref{lem:bound_XH}, we have
\[
\sum_{i=1}^\infty \ip{\widetilde\T\left(\ue_i\right), \ue_i}_{\widetilde\cH^1_0} 
    = \sum_{i=1}^\infty \E \left[ \abs{\ip{\ue_i, X}_{\widetilde \cH_0^1}}^2 \right] 
    = \E \left[ \norm{X}_{\widetilde \cH_0^1}^2 \right] 
    \le C \E \left[ \norm{X}_{\cH_0^1}^2 \right] 
    < + \infty. \qedhere
\]
\end{proof}

\begin{proof}[Proof of \cref{thm:whole}]
Using \citet[Proposition 14.13]{Jan}, every trace class operator is compact and positive definite. 
Therefore, we can do a spectral decomposition for the operator $\widetilde \T$. 
There exists an orthogonal basis $\{ \tilde \ue_i \}^\infty_{i=1}$, such that 
\begin{align*}
    \widetilde \T \left( \tilde \ue_i \right) = \gamma_i \tilde \ue_i,
\end{align*}
with $\gamma_1 \ge \gamma_2 \ge \dots > 0.$ 
Set $h_t^i := \ip{V_t - w_*, \tilde \ue_i}_{\widetilde \cH_0^1}$. 
Then, using \eqref{eq:whole_new}, we have
\begin{align*}
\frac{\ud h_t^i}{\ud t} 
    = \frac{\ip{\ud \left(V_t - w_*\right), \tilde \ue_i}_{\widetilde \cH_0^1}}{\ud t} 
    = -\ip{\widetilde \T \left(V_t - w_*\right), \tilde \ue_i}_{\widetilde \cH_0^1} 
    = -\ip{V_t - w_*, \widetilde \T \tilde \ue_i}_{\widetilde \cH_0^1} 
    = -\gamma_i h_t^i.
\end{align*}
Therefore, $h_t^i = \ue^{-\gamma_i t} h_0^i$. 
Hence, using Parseval's identity again, we get
\begin{align*}
    \norm{V_t - w_*}^2_{\widetilde \cH_0^1} 
        = \sum_{i=1}^\infty \left( h^i_t \right)^2 
        = \sum_{i=1}^\infty \ue^{-2\gamma_i t} \left( h_0^i \right)^2,
\end{align*}
which converges to $0$ because $\gamma_i > 0$ and $\sum_{i=1}^\infty \left(h_0^i\right)^2 = \norm{w_*}^2_{\widetilde \cH_0^1} < + \infty.$ 
Finally, since the norm $\norm{\cdot}_{\widetilde \cH_0^1}$ is equivalent to $\norm{\cdot}_{ \cH_0^1},$ we conclude
\[
    \lim_{t\to\infty}\norm{V_t-w_*}_{\cH_0^1} = 0. \qedhere
\]
\end{proof}

%


\appendix
\section{Auxiliary results}


\subsection{Functional inequalities and norm estimates}

In the first part of the appendix, we show that the Fréchet derivative of the loss function is continuous, and we also prove that the neural network and its wide network limit are bounded in the $\mathcal H_0^1$-norm.

\begin{lemma}[Continuity of the Fr\'echet derivative] 
\label{lem:con_frechet}
Assume that the operators of the PDE \eqref{eq:general_operator}--\eqref{eq:operator} satisfy \cref{ass:bound}. 
Then, the Fr\'echet derivative of the loss function is continuous, \emph{i.e.} there exists a constant $K > 0$, such that for any $u, v, w$ in $\cH_0^1 (\R^d)$ holds
\begin{align*}
    \abs{\ip{\D I^k \left( v \right), u}_{\cH_0^1} - \ip{\D I^k \left( w \right), u}_{\cH_0^1}} \le K  \norm{v - w}_{\cH_0^1 }  \norm{u}_{\cH_0^1 } .
\end{align*}
In particular, by choosing $w = 0$, we  have
\begin{align*}
    \abs{\ip{\D I^k \left( v \right), u}_{\cH_0^1}} \le K \left(  1 + \norm{v}_{\cH_0^1 } \right)  \norm{u}_{\cH_0^1 } .
\end{align*}
\end{lemma}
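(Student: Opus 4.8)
The plan is to exploit the linearity of $\D I^k$ in its first argument. From the expression \eqref{eq:frechet-loss}, the terms involving $U^{k-1}$ and $F(U^{k-1})$ do not depend on $v$ or $w$, so they cancel in the difference, leaving
\[
\ip{\D I^k(v), u}_{\cH_0^1} - \ip{\D I^k(w), u}_{\cH_0^1} = \ip{v - w, u}_{L^2} + h \ip{\cL(v-w), u}_{\cH^{-1},\cH_0^1}.
\]
I would then estimate the two terms separately: by the Cauchy--Schwarz inequality together with the elementary bound $\norm{f}_{L^2} \le \norm{f}_{\cH_0^1}$, valid for every $f \in \cH_0^1(\R^d)$, the first term is at most $\norm{v-w}_{\cH_0^1}\norm{u}_{\cH_0^1}$, while \cref{ass:bound} bounds the second by $hM \norm{v-w}_{\cH_0^1}\norm{u}_{\cH_0^1}$. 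Adding these gives the first inequality with $K = 1 + hM$.

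For the ``in particular'' statement, I would take $w = 0$ and apply the triangle inequality to split off $\ip{\D I^k(0), u}_{\cH_0^1} = - \ip{U^{k-1}, u}_{L^2} + h \ip{F(U^{k-1}), u}_{L^2}$. Using Cauchy--Schwarz, the embedding $\norm{\cdot}_{L^2}\le\norm{\cdot}_{\cH_0^1}$, and the second part of \cref{ass:bound} (namely $\norm{F(U^{k-1})}_{L^2} \le M\norm{U^{k-1}}_{\cH_0^1}$), this piece is bounded by $(1 + hM)\norm{U^{k-1}}_{\cH_0^1}\norm{u}_{\cH_0^1}$, a constant (depending on $U^{k-1}$ but not on $v$ or $u$) times $\norm{u}_{\cH_0^1}$. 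Enlarging $K$ to absorb $\norm{U^{k-1}}_{\cH_0^1}$ then yields the claimed bound $K(1 + \norm{v}_{\cH_0^1})\norm{u}_{\cH_0^1}$.

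There is essentially no genuine obstacle in this lemma: it is a direct consequence of the linearity of $\D I^k$ in its first argument and the two boundedness hypotheses in \cref{ass:bound}. The only point deserving a moment's attention is the bookkeeping of norms --- the left-hand side is phrased through the $\cH_0^1$ pairing, whereas the natural estimates produce $L^2$ inner products and the $\cH^{-1}$--$\cH_0^1$ duality pairing; both are reconciled by the continuous embedding $\cH_0^1(\R^d) \hookrightarrow L^2(\R^d)$ of norm $1$ and by observing that the duality pairing appearing here is exactly the one controlled by \cref{ass:bound}.
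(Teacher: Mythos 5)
Your proof is correct and follows essentially the same route as the paper: exploit linearity in the first argument to cancel the $U^{k-1}$-terms, apply Cauchy--Schwarz plus \cref{ass:bound} to the two remaining pieces, then handle $\ip{\D I^k(0),u}_{\cH_0^1}$ separately via the triangle inequality for the ``in particular'' claim. (As a minor aside, the paper's proof writes $\frac{h}{2}$ in front of the $\cL$-term, but from \eqref{eq:frechet-loss} the correct coefficient is $h$, as you have it; the discrepancy only affects the unimportant constant $K$.)
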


\begin{proof}
Using the definition of the Fr\'echet derivative in \eqref{eq:frechet-loss}, the triangle and Cauchy--Schwarz inequalities and \cref{ass:bound}, we get that
\begin{align*}
\abs{\ip{\D I^k \left( v \right), u}_{\cH_0^1} - \ip{\D I^k \left( w \right), u}_{\cH_0^1}} 
    &=  \abs{\ip{v-w,u}_{L^2} + \frac{h}{2}\ip{\cL (v-w),u}_{\cH^{-1},\cH_0^1}} \\  
    &\leq \norm{v-w}_{L^2} \norm{u}_{L^2}+ \frac{hM}{2}\norm{v-w}_{\cH_0^1}\norm{u}_{\cH_0^1} \\ 
    &\le K  \norm{v - w}_{\cH_0^1 }  \norm{u}_{\cH_0^1 }.
\end{align*} 
Setting $w=0$ and using again the Cauchy--Schwarz inequality and \cref{ass:bound}, yields
\begin{align*}
\abs{\ip{\D I^k \left( 0 \right), u}_{\cH_0^1}} 
    & = \left| -\ip{U^{k-1},u}_{L^2} +h\ip{F(U^{k-1}),u}_{L^2} \right| \\
    & \le \norm{U^{k-1}}_{L^2}\norm{u}_{L^2}+h\norm{F(U^{k-1})}_{L^2} \norm{u}_{L^2} 
    \leq K \norm{u}_{\cH_0^1 },
\end{align*}
since $U^{k-1} \in \cH_0^1$. 
Then, combining the two results and using the triangle inequality, we arrive at
\begin{align*}
\abs{\ip{\D I^k \left( v \right), u}_{\cH_0^1}} 
    & = \abs{\ip{\D I^k \left( v \right), u}_{\cH_0^1} - \ip{\D I^k \left( 0 \right), u}_{\cH_0^1} + \ip{\D I^k \left( 0 \right), u}_{\cH_0^1}} \\ 
    & \leq \abs{\ip{\D I^k \left( v \right), u}_{\cH_0^1} - \ip{\D I^k \left( 0 \right), u}_{\cH_0^1}} + \abs{\ip{\D I^k \left( 0 \right), u}_{\cH_0^1}} \\
    & \leq K \left( 1 + \norm{v}_{\cH_0^1 } \right)  \norm{u}_{\cH_0^1 } . \qedhere
\end{align*}
\end{proof}

\begin{lemma}
\label{lem:EVtN}
Assume that the neural network satisfies \cref{ass:initial} and the operators of the PDE \eqref{eq:general_operator}--\eqref{eq:operator} satisfy \cref{ass:bound,ass:ellip}.
Then, we have the following inequalities, for all $t\ge 0,$
\begin{align*}
\E \left[ \norm{V^n_t}_{\cH_0^1 }^2 \right] \le C_\psi 
    \quad \text{ and } \quad 
\norm{V_t}^2_{\cH_0^1} \le C_\psi ,
\end{align*}
where $C_\psi$ is a positive constant that only depends on the activation function $\psi$.
\end{lemma}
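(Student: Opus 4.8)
The plan is to use that both dynamics are gradient flows for the loss functional $I^k$, so that $t\mapsto I^k(V^n_t)$ and $t\mapsto I^k(V_t)$ are non-increasing in the training time, and then to turn the resulting bounds on the loss into bounds on the $\cH_0^1$-norm by invoking the coercivity estimate~\eqref{eq:Ik-bound}. The point of going through the loss, rather than estimating $\tfrac{\ud}{\ud t}\norm{V^n_t}_{\cH_0^1}^2$ directly, is precisely that monotonicity of $I^k$ gives a bound that is \emph{uniform in $t$}.

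First I would establish the monotonicity. Setting $J_n(\theta):=I^k(V^n(\theta;\cdot))$, the chain rule together with the training dynamics~\eqref{eq:dytheta} gives, for a.e.\ $t$,
\[
    \frac{\ud}{\ud t}\,I^k(V^n_t) = \nabla_\theta J_n(\theta^n_t)\cdot\frac{\ud\theta^n_t}{\ud t} = -\eta_n\,\abs{\nabla_\theta J_n(\theta^n_t)}^2 \le 0 .
\]
(Equivalently, one may argue directly in $\cH_0^1$ using~\eqref{eq:dynamic_VN}, the nonnegativity being a consequence of the kernel $Z^n_t(x,y)=\frac1n\sum_{i=1}^n X^{i,n}_t(x)\cdot X^{i,n}_t(y)$ being an average of rank-one, hence positive semi-definite, kernels.) Therefore $I^k(V^n_t)\le I^k(V^n_0)$ for all $t\ge0$. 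The same argument applied to~\eqref{eq:whole}, now using that $Z(x,y)=\E[X(x)\cdot X(y)]$ is positive semi-definite and that $V_0=0$, yields $I^k(V_t)\le I^k(0)=\tfrac12\norm{U^{k-1}}_{L^2}^2$ for all $t\ge0$.

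Second, for $h$ small enough that $\tfrac14-\tfrac{h\lambda_2}{2}>0$ (as is implicitly assumed throughout, e.g.\ $h<\tfrac1{2\lambda_2}$ as in~\cref{thm:var_to_pde}), inequality~\eqref{eq:Ik-bound} is coercive in the $\cH_0^1$-norm: $\norm{u}_{\cH_0^1}^2\le\tfrac{2}{h\lambda_1}\bigl(I^k(u)+R^{(1)}_k\bigr)$ for every $u\in\cH_0^1(\R^d)$, with $R^{(1)}_k$ from~\eqref{eq:jnw} a finite, $u$-independent constant. Applying this with $u=V^n_t$, taking expectations, and using the monotonicity, the bound for the finite network reduces to estimating $\E[I^k(V^n_0)]$ uniformly in $n$. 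From the definition~\eqref{eq:energy_functional}, \cref{ass:bound} and Cauchy--Schwarz give $\mathcal M^k(V^n_0)\le\tfrac{1+hM}{2}\norm{V^n_0}_{\cH_0^1}^2$ and $\mathcal G^k(V^n_0)\le C\bigl(\norm{V^n_0}_{\cH_0^1}+1\bigr)$ for a constant $C$ depending only on $h$, $M$ and $U^{k-1}$, so $\E[I^k(V^n_0)]\le C'\bigl(\E[\norm{V^n_0}_{\cH_0^1}^2]+\E[\norm{V^n_0}_{\cH_0^1}]+1\bigr)$. The computation in the proof of~\cref{lem:v0} actually delivers $\E[\norm{V^n_0}_{\cH_0^1}^2]\le (C^{(1)})^2 n^{1-2\delta}$, which together with $\E[\norm{V^n_0}_{\cH_0^1}]\le C^{(1)}n^{\frac12-\delta}$ and the condition $\delta>\tfrac12$ bounds both moments by a $\psi$-dependent constant uniformly in $n\ge1$. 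Hence $\E[\norm{V^n_t}_{\cH_0^1}^2]\le C_\psi$; applying~\eqref{eq:Ik-bound} also with $u=V_t$ and the deterministic bound $I^k(V_t)\le\tfrac12\norm{U^{k-1}}_{L^2}^2$ gives $\norm{V_t}_{\cH_0^1}^2\le C_\psi$, for all $t\ge0$.

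The only point needing care is the monotonicity step: because of the gradient clipping, $\theta\mapsto I^k(V^n(\theta;\cdot))$ is merely locally Lipschitz rather than $C^1$, so the chain rule above holds only for a.e.\ $t$. This suffices, however, since the right-hand side of~\eqref{eq:dytheta} is bounded on bounded time intervals ($\psi$ and its derivatives are bounded and the clipped parameters lie in $[-r_n,r_n]$), so $t\mapsto\theta^n_t$ is Lipschitz and $t\mapsto I^k(V^n_t)$ is absolutely continuous; a derivative that is $\le0$ almost everywhere then forces $t\mapsto I^k(V^n_t)$ to be non-increasing. The analogous remark applies to $t\mapsto I^k(V_t)$.
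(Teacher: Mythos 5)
Your proof is correct and follows essentially the same route as the paper's: establish that $t\mapsto I^k(V^n_t)$ and $t\mapsto I^k(V_t)$ are non-increasing via the gradient-flow structure, then combine the coercivity of $I^k$ (the lower bound from~\eqref{eq:Ik-bound}) with the growth bound $I^k(u)\lesssim 1+\norm{u}_{\cH_0^1}^2$ and the initialization estimate from~\cref{lem:v0}. Your additional remark about the clipped parameters making $\theta\mapsto I^k(V^n(\theta;\cdot))$ only locally Lipschitz, and the resulting need to argue via absolute continuity, is a legitimate technical point that the paper silently passes over.
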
 

\begin{proof}
Let us first show that $I^k \left( V_t^n \right)$ is not increasing in $t$. 
According to \eqref{eq:dytheta}, we have
\[
    \frac{\ud I^k \left( V_t^n \right)}{\ud t} 
        = \nabla_\theta I^k \left( V_t^n \right) \cdot\frac{\ud \theta_t^n}{\ud t} 
        = - \eta_n\abs{\nabla_\theta I^k \left( V^n \left( \theta_t^n; x \right) \right)}^2
        \le 0.
\]
This inequality readily implies $ I^k \left( V_t^n \right)\le I^k \left( V_0^n \right)$. 
Using \cref{ass:bound}, the Cauchy--Schwarz inequality and $ab \leq \frac{a^2 + b^2}{2}$, we get that
\begin{align*}
I^k (u) 
    &= \frac{1}{2}\norm{ u - U^{k - 1}}_{L^2}^2 + \frac{h}{2}\ip{\cL u,u}_{\cH^{-1},\cH_0^1} + h\ip{F \left( U^{k - 1} \right), u}_{L^2}  \\ 
    &= \frac{1}{2}\norm{u}^2_{L^2}+ \frac{h}{2}\ip{\cL u,u}_{\cH^{-1},\cH_0^1} + \ip{hF \left( U^{k - 1}\right) -U^{k-1}, u}_{L^2}+\frac{1}{2}\norm{U^{k-1}}_{L^2}^2\\ 
    &\leq \frac{1}{2}\norm{u}^2_{L^2}+ \frac{hM}{2}\norm{u}^2_{\cH_0^1}+\norm{hF(U^{k-1})-U^{k-1}}_{L^2} \norm{u}_{L^2}+\frac{1}{2} \norm{U^{k-1}}_{L^2}^2 \\ 
    &\leq   \norm{u}^2_{L^2}+ \frac{hM}{2}\norm{u}^2_{\cH_0^1} + \frac{1}{2} \norm{hF(U^{k-1})-U^{k-1}}_{L^2}^2+\frac{1}{2}\norm{U^{k-1}}_{L^2}^2\\
    &= C_1 \norm{u}_{\cH_0^1 }^2 + C_2.
 \end{align*}
Moreover, using \cref{ass:ellip}, the Cauchy--Schwarz inequality again and the inequality 
\[
    \norm{m}_{L^2} \norm{n}_{L^2} \leq \frac{\lambda}{2} \norm{m}_{L^2}^2 + \frac{1}{2 \lambda} \norm{v}_{L^2}^2 
        \text{ with } \lambda = \frac{h \lambda_1}{2},
\]
we arrive at
\begin{align*}
I^k (u) 
    &= \frac{1}{2}\norm{u}^2_{L^2}+ \frac{h}{2}\ip{\cL u,u}_{\cH^{-1},\cH_0^1} + \ip{hF \left( U^{k - 1}\right) -U^{k-1}, u}_{L^2}+\frac{1}{2}\norm{U^{k-1}}_{L^2}^2\\ 
    &\geq \left( \frac{1}{2} - \frac{h \lambda_2}{2} \right) \norm{u}^2_{L^2} + \frac{h \lambda_1}{2} \norm{ u}_{\cH_0^1}^2 - \norm{h F \left( U^{k - 1} \right) - U^{k - 1}}_{L^2} \norm{u}_{L^2} -\frac{1}{2} \norm{U^{k - 1}}_{L_2}^2 \\
    &\geq \left( \frac{1}{2} - \frac{h \lambda_2}{2} \right) \norm{u}^2_{L^2} + \frac{h \lambda_1}{4} \norm{ u}_{\cH_0^1}^2 - \frac{1}{h\lambda_1} \norm{h F \left( U^{k - 1} \right) - U^{k - 1}}_{L^2}^2  -\frac{1}{2} \norm{U^{k - 1}}_{L^2}^2\\
    &= C_3 \norm{u}_{\cH_0^1 }^2 - C_4.
 \end{align*}
Therefore, since $I^k \left( V_t^n \right)$ is not increasing with $t$ and using \cref{lem:v0}, we have that
\begin{align*}
\E \left[ \norm{V^n_t}_{\cH_0^1}^2 \right] 
    &\leq \E \left[ \frac{1}{C_3} I^k \left( V^n_t \right) + \frac{C_4}{C_3} \right] 
     \leq \E \left[ \frac{1}{C_3} I^k \left( V_0^n \right) + \frac{C_4}{C_3} \right] \\
    &\leq \E \left[ \frac{C_1}{C_3} \norm{V^n_0}_{\cH_0^1 }^2 + \frac{C_2 + C_4}{C_3} \right] 
     \leq C_{\psi}.
\end{align*}
Using similar arguments, we get
\begin{align*}
\frac{\ud I^k \left( V_t \right)}{\ud t} 
    = \ip{\mathcal{D} I^k \left( V_t \right), \frac{\ud V_t}{\ud t}}_{\cH_0^1} 
    = - \abs{\ip{\mathcal{D} I^k \left( V_t \right), \E\left[\nabla _\theta\beta_0 \psi \left( \alpha_0 x + c_0 \right)\right]}_{\cH_0^1}}^2
    \le 0.
\end{align*}
This inequality yields that $ I^k \left( V_t \right)\le I^k \left( V_0 \right)=I^k \left( 0 \right)$, hence $\norm{V_t}^2_{\cH_0^1} \le  C_\psi.$
\end{proof}


\subsection{Gradient \texorpdfstring{$\theta$}{θ} estimates}

This subsection contains several useful results concerning gradient estimates of the neurons of the neural network with respect to its parameters.
Moreover, we show that gradients of the neurons are Lipschitz continuous with respect to the parameters $\theta$ of the network, that they converge to their ``unclipped" analogs for large $n$, and we estimate the distance between the parameters as the training progresses.

\begin{lemma}[$\cH_0^1$-boundedness of $X^n$]
\label{lem:boundXN}
Let $\theta \in \R \times \R \times \R^d$, then $X^n$ is bounded in $\cH_0^1$, \emph{i.e.} there exists a constant $C_\psi>0$ such that
\begin{align*}
\norm{X^n(\theta)}^2_{\cH_0^1} &\le C_\psi \left(r_n\right)^{d+8}.
\end{align*}
\end{lemma}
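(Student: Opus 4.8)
The plan is to reduce the bound to the gradient with respect to the \emph{clipped} parameters, to use that each clipping map is $1$-Lipschitz, and then to estimate the resulting explicit expressions by a change of variables. Writing $\hat\theta=(\hat\beta,\hat\alpha,\hat c)$ for the clipped parameters, note that $\hat\beta$ depends only on $\beta$, $\hat\alpha$ only on $\alpha$, and $\hat c$ only on $c$, each through compositions of $\min$ and $\max$; hence the Jacobian $\partial\hat\theta/\partial\theta$ is diagonal with entries in $\{0,1\}$ (Lebesgue-a.e.), and by the chain rule $\abs{\mathcal{X}^n(\theta;x)}\le\abs{\nabla_{\hat\theta}\big(\hat\beta\,\psi(\hat\alpha x+\hat c)\big)}$ pointwise in $x$. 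It therefore suffices to bound the $\cH_0^1(\R^d)$-norm of the three blocks $\psi(\hat\alpha x+\hat c)$, $\hat\beta\,x\cdot(\nabla\psi)(\hat\alpha x+\hat c)$, and $\hat\beta\,(\nabla\psi)(\hat\alpha x+\hat c)$, uniformly over the clipped range $\abs{\hat\alpha}\in[r_n^{-1},r_n]$, $\abs{\hat\beta}\le r_n$, and $\abs{\hat c}\le\sqrt{d}\,r_n$.

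Two elementary facts do the work. First, all three integrands vanish outside $\{x:\hat\alpha x+\hat c\in K\}$, where $K=\operatorname{supp}\psi$; picking $R$ with $K\subseteq B(0,R)$ we get, on this set, $\abs{x}\le\abs{\hat\alpha}^{-1}\big(R+\abs{\hat c}\big)\le C_\psi\,r_n^{2}$, since $\abs{\hat\alpha}^{-1}\le r_n$ and $\abs{\hat c}\le\sqrt{d}\,r_n$. Second, for any fixed $\varphi\in C_c^\infty(\R^d)$, the substitution $y=\hat\alpha x+\hat c$ gives $\int_{\R^d}\abs{\varphi(\hat\alpha x+\hat c)}^2\,\ud x=\abs{\hat\alpha}^{-d}\norm{\varphi}_{L^2}^2\le r_n^{d}\norm{\varphi}_{L^2}^2$, and the same holds with $\varphi$ replaced by any fixed partial derivative of $\psi$ up to order two. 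Finally, to pass from the $L^2$-norm of a block to its $\cH_0^1$-norm one uses $\nabla_x[\varphi(\hat\alpha x+\hat c)]=\hat\alpha\,(\nabla\varphi)(\hat\alpha x+\hat c)$, which multiplies the $L^2$ estimate by a further factor $\abs{\hat\alpha}^{2}\le r_n^{2}$, together with the product rule in the $\hat\alpha$-block, which in addition brings down a factor $\abs{x}^{2}\le C_\psi r_n^{4}$.

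Carrying this through, the $\hat\beta$-block $\psi(\hat\alpha x+\hat c)$ is bounded in $\cH_0^1$ by $C_\psi r_n^{2+d}$ (the $r_n^2$ coming only from the single spatial derivative, the $r_n^d$ from the change of variables), the $\hat c$-block $\hat\beta(\nabla\psi)(\hat\alpha x+\hat c)$ by $C_\psi r_n^{4+d}$ (an extra $r_n^2$ from $\hat\beta^2$), and the $\hat\alpha$-block $\hat\beta\,x\cdot(\nabla\psi)(\hat\alpha x+\hat c)$ is the largest: its $L^2$ part is controlled by $\hat\beta^2\,\abs{x}^2\,\abs{\hat\alpha}^{-d}\le C_\psi\,r_n^{2}\,r_n^{4}\,r_n^{d}$, and its spatial-gradient part by $\hat\beta^2\,\abs{\hat\alpha}^{2}\,\abs{x}^2\,\abs{\hat\alpha}^{-d}\le C_\psi\,r_n^{2}\,r_n^{2}\,r_n^{4}\,r_n^{d}=C_\psi\,r_n^{8+d}$. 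Summing the three contributions and using $r_n\ge1$ for $n$ large yields $\norm{X^n(\theta)}_{\cH_0^1}^2\le C_\psi\,r_n^{d+8}$, where $C_\psi$ collects the (finite) $L^2$-norms of $\psi$ and its partial derivatives up to order two, together with $R$ and $d$.

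The only real difficulty is the bookkeeping of the powers of $r_n$ in the $\hat\alpha$-block, where the prefactor $\hat\beta$, the factor $x$ (bounded only on the support of the integrand, and only by $r_n^{2}$ rather than $r_n$), the Jacobian $\abs{\hat\alpha}^{-d}$ from the change of variables, and the extra $\hat\alpha\,x$ produced by the spatial derivative compound into the sharp exponent $d+8$; the reduction to the clipped gradient and all the other blocks are routine.
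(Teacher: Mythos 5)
Your argument is correct and follows essentially the same route as the paper's proof: you decompose $\nabla_\theta[\hat\beta\,\psi(\hat\alpha x+\hat c)]$ into the three blocks, observe that each clipping map has a.e.\ Jacobian in $\{0,1\}$ so the clipped gradient dominates, bound $\abs{x}\le C_\psi r_n^2$ on the support $\{\hat\alpha x+\hat c\in K\}$, and then substitute $y=\hat\alpha x+\hat c$ to pick up the factor $\abs{\hat\alpha}^{-d}\le r_n^d$. The paper carries the factor $\abs{x}^2$ through the change of variables $y=\hat\alpha x$, $z=y+\hat c$ and only then bounds $\abs{z-\hat c}^2\le C_\psi r_n^2$, whereas you bound $\abs{x}$ pointwise first; both lead to the identical power count $d+8$ (dominated by the spatial gradient of the $\hat\alpha$-block), so this is just a presentational variation of the same proof.
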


\begin{proof}
The derivatives of a neuron with respect to its parameters equal
\begin{align*}
\frac{\partial}{\partial \beta} \hat\beta \psi \left( \hat\alpha x + \hat c \right)
    &= \psi \left( \hat\alpha x + \hat c \right) \mathbf{1}_{ \{ \abs{\beta} \leq r_n \} }, \\
\frac{\partial}{\partial \alpha} \hat\beta \psi \left( \hat\alpha x + \hat c \right) 
    &= \hat\beta x^\mathsf{T} \left( \nabla \psi \right) \left( \hat\alpha x + \hat c \right) \mathbf{1}_{\{ \frac{1}{r_n} \leq \abs{\alpha} \leq r_n \} }, \\ 
\frac{\partial}{\partial c} \hat\beta \psi \left( \hat\alpha x + \hat c \right) 
    &= \hat\beta \left( \nabla \psi \right) \left( \hat\alpha x + \hat c \right) \mathbf{1}_{ \{ \abs{c} \leq r_n \}}.
\end{align*}
Therefore, we obtain the bound
\begin{align*}
\abs{X^n(\theta)} 
    &= \abs{\frac{\partial}{\partial \beta} \hat\beta \psi \left( \hat\alpha x + \hat c \right) +  \frac{\partial}{\partial \alpha} \hat\beta \psi \left( \hat\alpha x + \hat c \right) + \frac{\partial}{\partial c} \hat\beta \psi \left( \hat\alpha x + \hat c \right)} \\
    &\leq \abs{\psi \left( \hat\alpha x + \hat c \right) \mathbf{1}_{ \{ \abs{\beta} \leq r_n \} }} + \abs{\hat\beta x^\mathsf{T} \left( \nabla \psi \right) \left( \hat\alpha x + \hat c \right) \mathbf{1}_{\{ \frac{1}{r_n} \leq \abs{\alpha} \leq r_n \} }} + \abs{\hat\beta \left( \nabla \psi \right) \left( \hat\alpha x + \hat c \right) \mathbf{1}_{\{\abs{c} \leq r_n \} }} \\
    &\leq \abs{\psi \left( \hat\alpha x + \hat c \right)} + r_n \abs{ x\cdot\nabla \psi  \left( \hat\alpha x + \hat c \right)} + r_n \abs{\left( \nabla \psi \right) \left( \hat\alpha x + \hat c \right)}.
\end{align*}
The second term above can be bounded by 
\begin{align*}
r_n \abs{ x\cdot\nabla \psi  \left( \hat\alpha x + \hat c \right)}
    \le r_n (\hat\alpha_n)^{-1}\Big(\abs{ (\hat\alpha_n x+\hat c)\cdot\nabla \psi  \left( \hat\alpha x + \hat c \right)} + \abs{\hat c \cdot \nabla \psi  \left( \hat\alpha x + \hat c \right)} \Big)
    \le C_\psi (r_n)^3.
\end{align*}
Therefore, using that $\abs{\hat\alpha} \ge (r_n)^{-1}$, we have
\begin{multline*}
\int_{\R^d} \abs{X^n(\theta)}^2\ud x \\
    \le \int_{\R^d} \abs{\psi \left( \hat\alpha x + \hat c \right)}^2 \ud x + (r_n)^2 \int_{\R^d} \abs{x}^2 \abs{ (\nabla \psi)  \left( \hat\alpha x + \hat c \right)}^2 \ud x + (r_n)^2 \int_{\R^d} \abs{\left( \nabla \psi \right) \left( \hat\alpha x + \hat c \right)}^2 \ud x \\
    \stackrel{\left(y=\hat\alpha x\right)}{=} \abs{\hat\alpha}^{-d}\int_{\R^d} \abs{\psi \left(  y + \hat c \right)}^2 \ud y + (r_n)^2 \abs{\hat\alpha}^{-d-2}\int_{\R^d}\abs{y}^2 \abs{\left( \nabla \psi \right) \left(  y + \hat c \right)}^2\ud y \\
   \quad +(r_n)^2 \abs{\hat\alpha}^{-d}\int_{\R^d} \abs{\left( \nabla \psi \right) \left(  y + \hat c \right)}^2\ud y\\
  \stackrel{\left(z= y+ \hat c\right)} {\le} (r_n)^d\int_{\R^d} \abs{\psi \left(  z \right)}^2 \ud z+(r_n)^{d+4}\int_{\R^d}\abs{z - \hat c}^2 \abs{\left( \nabla \psi \right) \left(  z \right)}^2\ud z+(r_n)^{d+2}\int_{\R^d} \abs{\left( \nabla \psi \right) \left( z \right)}^2\ud z\\
 \le C_\psi (r_n)^{d+6}
\end{multline*}
Analogously we can estimate the gradient, and we get that, 
\begin{align*}
\abs{\nabla_x X^n(\theta)}
    \le 2 r_n\abs{\nabla\psi \left( \hat\alpha x + \hat c \right)} + \left(r_n\right)^2 \abs{x}\abs{\left( D^2 \psi \right) \left( \hat\alpha x + \hat c \right)} + (r_n)^2 \abs{\left( D^2 \psi \right) \left( \hat\alpha x + \hat c \right)},
\end{align*}
hence 
\[
    \int_{\R^d} \abs{\nabla_x X^n(\theta)}^2\ud x\le C_\psi (r_n)^{d+8}.
\]
Finally, we arrive at
\[
    \norm{X^n(\theta)}^2_{\cH_0^1}= \int_{\R^d} \abs{X^n(\theta)}^2+\abs{\nabla_xX^n(\theta)}^2\ud x \le C_\psi \left(r_n\right)^{d + 8}. \qedhere
\]
\end{proof}

\begin{lemma}[$\cH_0^1$-boundedness of $X$]
\label{lem:bound_XH}
Let $\theta\in \R\times\R\times\R^d$, and assume that the neural network satisfies \cref{ass:initial}.
Then, $X$ is bounded in $\cH_0^1$, \emph{i.e.} there exists a constant $C_\psi>0$ such that
\begin{align*}
    \norm{X(\theta)}^2_{\cH_0^1} 
        \le C_\psi\left(1+\beta^2\right) \left(1+c^2\right)\left(\abs{\alpha}^{-d}+\abs{\alpha}^{-d-2}+\abs{\alpha}^{2-d}\right).
\end{align*}
Moreover,
\begin{align*}
    \E \left[ \norm{X}^2_{\cH_0^1} \right] < + \infty 
       \quad \text{ and } \quad
    \sup_{n \ge 1} \E \left[ \norm{X^n}^2_{\cH_0^1} \right] < + \infty.
\end{align*}
\end{lemma}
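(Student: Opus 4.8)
The plan is to reduce everything to the explicit form of $X(\theta;\cdot)=\nabla_\theta\big(\beta\psi(\alpha x+c)\big)$ together with a single affine change of variables. Writing $\theta=(\beta,\alpha,c)$, the three blocks of $X(\theta;x)$ are $\psi(\alpha x+c)$ (the $\beta$-derivative), $\beta\, x\cdot(\nabla\psi)(\alpha x+c)$ (the $\alpha$-derivative) and the $\R^d$-valued $\beta\,(\nabla\psi)(\alpha x+c)$ (the $c$-derivative), so that $\abs{X(\theta;x)}^2\le \abs{\psi(\alpha x+c)}^2+\beta^2\abs{x}^2\abs{(\nabla\psi)(\alpha x+c)}^2+\beta^2\abs{(\nabla\psi)(\alpha x+c)}^2$. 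Differentiating these blocks once more in $x$ produces, up to a universal constant, terms of the form $\abs{\alpha}\abs{(\nabla\psi)(\alpha x+c)}$, $\abs{\beta}\abs{(\nabla\psi)(\alpha x+c)}$, $\abs{\beta}\abs{\alpha}\abs{x}\abs{(D^2\psi)(\alpha x+c)}$ and $\abs{\beta}\abs{\alpha}\abs{(D^2\psi)(\alpha x+c)}$.

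For the pointwise estimate I would integrate $\abs{X(\theta;\cdot)}^2+\abs{\nabla_xX(\theta;\cdot)}^2$ over $\R^d$ and substitute $y=\alpha x+c$, so that $\ud x=\abs{\alpha}^{-d}\ud y$ and $\abs{x}^2=\abs{y-c}^2/\alpha^2\le 2(\abs{y}^2+\abs{c}^2)/\alpha^2$. Each resulting term then collapses to one of the finite constants $\int\abs{\psi}^2$, $\int\abs{\nabla\psi}^2$, $\int\abs{D^2\psi}^2$, $\int\abs{y}^2\abs{\nabla\psi}^2$, $\int\abs{y}^2\abs{D^2\psi}^2$ (all finite since $\psi\in C_c^\infty$), multiplied by one of $\abs{\alpha}^{-d}$, $\abs{\alpha}^{-d-2}$, $\abs{\alpha}^{2-d}$ and by one of $1$, $\beta^2$, $\beta^2\abs{c}^2$. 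Summing and using $1,\beta^2,\beta^2\abs{c}^2\le(1+\beta^2)(1+\abs{c}^2)$ gives
\[
\norm{X(\theta)}^2_{\cH_0^1}\le C_\psi(1+\beta^2)(1+\abs{c}^2)\big(\abs{\alpha}^{-d}+\abs{\alpha}^{-d-2}+\abs{\alpha}^{2-d}\big),
\]
which is the first assertion.

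For $\E[\norm{X}^2_{\cH_0^1}]<+\infty$ I would apply this bound at $\theta=\theta^1_0=(\beta^1_0,\alpha^1_0,c^1_0)$ and factor the expectation using the independence of the initial parameters from \cref{ass:initial}:
\[
\E\big[\norm{X}^2_{\cH_0^1}\big]\le C_\psi\,\E\big[1+(\beta^1_0)^2\big]\,\E\big[1+\abs{c^1_0}^2\big]\,\E\big[\abs{\alpha^1_0}^{-d}+\abs{\alpha^1_0}^{-d-2}+\abs{\alpha^1_0}^{2-d}\big].
\]
The first factor is finite because $\beta^1_0$ has finite second moment; the second because $\abs{c^1_0}^2\le 1+\abs{c^1_0}^{d+7}$ and $\E[\abs{c^1_0}^{d+7}]<\infty$; and the third because each of $\abs{\alpha}^{-d}$, $\abs{\alpha}^{-d-2}$, $\abs{\alpha}^{2-d}$ is dominated by $1+\abs{\alpha}^{-d-2}+\abs{\alpha}^{d+7}$ and $\E[\abs{\alpha^1_0}^{-d-2}+\abs{\alpha^1_0}^{d+7}]<\infty$ — all three bounds being precisely the moment conditions in \cref{ass:initial}.

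Finally, for $\sup_{n\ge1}\E[\norm{X^n}^2_{\cH_0^1}]<+\infty$, the same computation carried out with the clipped parameters $(\hat\beta,\hat\alpha,\hat c)$ gives $\norm{X^n(\theta)}^2_{\cH_0^1}\le C_\psi(1+\hat\beta^2)(1+\abs{\hat c}^2)\big(\abs{\hat\alpha}^{-d}+\abs{\hat\alpha}^{-d-2}+\abs{\hat\alpha}^{2-d}\big)$. Since $\abs{\hat\beta}\le\abs{\beta}$ and $\abs{\hat c}\le\abs{c}$ componentwise, the first two factors are harmless, and everything comes down to bounding the moments of $\abs{\hat\alpha}$ \emph{uniformly in $n$}. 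A short case distinction on whether $\abs{\alpha}$ lies above $r_n$, inside $[1/r_n,r_n]$, or below $1/r_n$ shows $\abs{\hat\alpha}^{-p}\le 1+\abs{\alpha}^{-p}$ for every $p\ge 0$ and every $n$, while the only positive power that can occur, namely $\abs{\hat\alpha}^{2-d}=\abs{\hat\alpha}$ when $d=1$, satisfies $\abs{\hat\alpha}\le\abs{\alpha}+1/r_1$ because $r_n$ is increasing. This produces an $n$-independent integrable majorant for $\norm{X^n(\theta^1_0)}^2_{\cH_0^1}$, hence the uniform bound; if preferred, the finitely many small indices $n$ can instead be absorbed via the crude deterministic estimate of \cref{lem:boundXN}. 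I expect this last point to be the only genuinely delicate one: because the lower clipping threshold $1/r_n$ tends to $0$, one has to argue that clipping $\alpha$ from below never makes a negative moment such as $\E[\abs{\hat\alpha}^{-d-2}]$ worse than a constant multiple of $1+\E[\abs{\alpha}^{-d-2}]$, rather than settling for the useless bound $r_n^{d+2}$.
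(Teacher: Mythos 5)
Your proposal is correct and follows essentially the same route as the paper: the same decomposition of $\abs{X}^2+\abs{\nabla_x X}^2$ into $\psi$-, $\nabla\psi$-, $D^2\psi$-terms, the same affine change of variables $y=\alpha x+c$ to extract powers of $\abs{\alpha}$, the same factorization of the expectation via independence of $(\beta^1_0,\alpha^1_0,c^1_0)$, and for the clipped network the same observation that $\hat\beta,\hat c$ are dominated componentwise and that the moments of $\hat\alpha$ are controlled uniformly in $n$ because $r_n$ is increasing. Your inequality $\abs{\hat\alpha}^{-p}\le 1+\abs{\alpha}^{-p}$ actually requires $r_n\ge 1$ (in general it should read $\abs{\hat\alpha}^{-p}\le r_1^{-p}+\abs{\alpha}^{-p}$), but you already flag this and offer the crude absorption of the finitely many small indices, which resolves it.
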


\begin{proof}
Let us first consider the $L^2$-norm of $X(\theta)$. 
As in the proof of \cref{lem:boundXN}, we have
\begin{align*}
\norm{X(\theta)}_{L^2}^2 
&\le \int_{\R^d} \abs{\psi \left( \alpha x + c \right)}^2 \ud x + \beta^2 \int_{\R^d} \abs{x}^2 \abs{\left( \nabla \psi \right) \left( \alpha x + c \right)}^2 \ud x + \beta^2 \int_{\R^d} \abs{\left( \nabla \psi \right) \left( \alpha x + c \right)}^2 \ud x \\
&= \abs{\alpha}^{-d}\int_{\R^d} \abs{\psi \left(  z \right)}^2 \ud z+\beta^2 \abs{\alpha}^{-d-2}\int_{\R^d}\abs{z-c}^2 \abs{\left( \nabla \psi \right) \left(  z \right)}^2\ud z+\beta^2 \abs{\alpha}^{-d}\int_{\R^d} \abs{\left( \nabla \psi \right) \left( z \right)}^2\ud z \\
&\le C_\psi \left(1+\beta^2\right) \left(1+\abs{c}^2\right) \left(\abs{\alpha}^{-d}+\abs{\alpha}^{-d-2}\right),
\end{align*}
where we have used the change of variables $z= \alpha x+c$ for the equality in the second step. 
Analogously, we have that 
\[
    \norm{\nabla X(\theta)}_{L^2}^2 \le C_\psi \left( 1+\beta^2 \right) \left( 1+\abs{c}^2 \right) \left( \abs{\alpha}^{-d} + \abs{\alpha}^{2-d} \right).
\]
Combining these two results, we recover the $\mathcal H^1_0$-estimate of $X(\theta)$. 
Taking expectations on the $\mathcal H^1_0$-estimate of $X(\theta)$ and using \cref{ass:initial}, we have that
\begin{align*}
\E \left[ \norm{X}^2_{\cH_0^1} \right] 
    &\le C_\psi \E \left[ 1 + \abs{\beta^1_0}^2 \right] \E \left[ 1 + \abs{c^1_0}^2 \right] \E \left[ \abs{\alpha_0^1}^{-d} + \abs{\alpha_0^1}^{-d-2} + \abs{\alpha_0^1}^{2-d} \right] < + \infty, 
\intertext{while using the $\mathcal H^1_0$-estimate of $X^n(\theta)$ from the previous lemma, we arrive at}    
\E \left[ \norm{X^n}^2_{\cH_0^1} \right] 
    &\le C_\psi \E \left[ 1 + \abs{\hat\beta^1_0}^2 \right] \E \left[ 1 + \abs{\hat c^1_0}^2 \right] \E \left[ \abs{\hat\alpha_0^1}^{-d} + \abs{\hat\alpha_0^1}^{-d-2} + \abs{\hat\alpha_0^1}^{2-d} \right] \\
    &\le  C_\psi \underbrace{\E \left[ 1 + \abs{\beta^1_0}^2 \right] \E \left[ 1 + \abs{ c^1_0}^2 \right]}_{<+\infty} \E \left[ \abs{\hat\alpha_0^1}^{-d} + \abs{\hat\alpha_0^1}^{-d-2} + \abs{\hat\alpha_0^1}^{2-d} \right].
\end{align*}
Using a similar reasoning as in the proof of \cref{lem:v0}, if $d=1,2$,
\[
\E \left[ \abs{\hat\alpha_0^1}^{-d} + \abs{\hat\alpha_0^1}^{-d-2} + \abs{\hat\alpha_0^1}^{2-d} \right] \le \E \left[ \abs{\alpha_0^1}^{-d} + \abs{\alpha_0^1}^{-d-2} + \left(r_n\right)^{-d} + \left(r_n\right)^{-d-2} + \abs{\alpha_0^1} + 1 \right] < \infty,
\]
and if $d \ge 3$,
\begin{align*}
\E \left[ \abs{\hat\alpha_0^1}^{-d} + \abs{\hat\alpha_0^1}^{-d-2} + \abs{\hat\alpha_0^1}^{2-d} \right] 
    &\le \E \left[ \abs{\alpha_0^1}^{-d} + \abs{\alpha_0^1}^{-d-2} + \left(r_n\right)^{-d} + \left(r_n\right)^{-d-2} + \abs{\alpha_0^1}^{2-d}+\left(r_n\right)^{2-d} \right] \\
    &< \infty.
\end{align*}
Therefore, $\sup_{n \ge 1} \E \left[ \norm{X^n}^2_{\cH_0^1} \right] < + \infty.$
\end{proof}

\begin{lemma}
\label{lem:epN}
Assume that the neural network satisfies \cref{ass:initial}.
Then
\begin{align*}
    \e_n := \E \left[ \norm{X^n - X}_{\cH_0^1}^2 \right] \xrightarrow[n\to\infty]{} 0.
\end{align*}
\end{lemma}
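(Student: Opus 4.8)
The plan is to establish the convergence in two stages: first show that $\norm{X^n - X}_{\cH_0^1} \to 0$ holds $\p$-almost surely, and then upgrade this to convergence in $L^1(\p)$ by the dominated convergence theorem, using the moment conditions of \cref{ass:initial} to produce a single integrable majorant that does not depend on $n$.

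For the almost sure convergence, I would argue as follows. Recall that $X = \mathcal X(\theta_0^1;\cdot)$ and $X^n = \mathcal X^n(\theta_0^1;\cdot)$ differ only through the clipping of the parameters at radius $r_n$. By \cref{ass:initial} one has, $\p$-almost surely, $\alpha_0^1 \neq 0$ while $\beta_0^1$ and $c_0^1$ are finite; since $r_n$ increases to infinity with $n$, there is then an almost surely finite random index $N$ such that, for all $n \ge N$, $\tfrac{1}{r_n} < \abs{\alpha_0^1} < r_n$, $\abs{\beta_0^1} < r_n$ and $\abs{c_0^1} < r_n$. For such $n$ the clipped parameters coincide with $\theta_0^1$ and the indicator factors appearing in the components of $X^n$ (cf. the expressions in the proof of \cref{lem:boundXN}) are all equal to one, so that $X^n = X$ as elements of $\cH_0^1(\R^d)$. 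Consequently $\norm{X^n - X}_{\cH_0^1}^2 \to 0$ $\p$-almost surely.

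For the domination, I would rerun the estimate in the proof of \cref{lem:bound_XH} with the clipped parameters in place of $\theta_0^1$. Using $\abs{\hat\beta_0^1} \le \abs{\beta_0^1}$, $\abs{\hat c_0^1} \le \abs{c_0^1}$, and the fact that clipping $\alpha$ away from zero only decreases its negative powers while clipping from above replaces large values by $r_n$, one obtains, for $r_n \ge 1$, bounds of the form $\abs{\hat\alpha_0^1}^{-d} \le \abs{\alpha_0^1}^{-d} + 1$, $\abs{\hat\alpha_0^1}^{-d-2} \le \abs{\alpha_0^1}^{-d-2} + 1$ and $\abs{\hat\alpha_0^1}^{2-d} \le \abs{\alpha_0^1}^{2-d} + \abs{\alpha_0^1} + 1$. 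Feeding these into the bound of \cref{lem:bound_XH} yields an $n$-independent majorant
\[
\norm{X^n}_{\cH_0^1}^2 \le C_\psi \left(1 + \abs{\beta_0^1}^2\right)\left(1 + \abs{c_0^1}^2\right)\left(\abs{\alpha_0^1}^{-d} + \abs{\alpha_0^1}^{-d-2} + \abs{\alpha_0^1}^{2-d} + \abs{\alpha_0^1} + 1\right) =: G,
\]
where $\E[G] < \infty$ follows from the independence of $\beta_0^1, \alpha_0^1, c_0^1$ together with $\E[\abs{\beta_0^1}^2] < \infty$, $\E[\abs{c_0^1}^2] < \infty$ and $\E[\abs{\alpha_0^1}^{-d-2} + \abs{\alpha_0^1}^{2-d} + \abs{\alpha_0^1}] < \infty$, all consequences of \cref{ass:initial}. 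Combining this with $\E[\norm{X}_{\cH_0^1}^2] < \infty$ from \cref{lem:bound_XH}, the inequality $\norm{X^n - X}_{\cH_0^1}^2 \le 2G + 2\norm{X}_{\cH_0^1}^2$ provides the required integrable dominating function (for all $n$ with $r_n \ge 1$, which is all that matters for the limit), and the dominated convergence theorem then gives $\e_n = \E[\norm{X^n - X}_{\cH_0^1}^2] \to 0$.

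I expect the main obstacle to be the domination step, specifically the bookkeeping for the powers of $\alpha$: one must check carefully that $\abs{\hat\alpha_0^1}$ is controlled both near the origin (where $\abs{\alpha_0^1}$ may be tiny but the lower clip at $\tfrac{1}{r_n}$ rescues the negative powers) and at infinity (where clipping caps the value at $r_n$, producing only constants or a harmless linear-in-$\abs{\alpha_0^1}$ term for $d=1$), so that no factor of $r_n$ survives in the final majorant. Once this is in place, the almost sure convergence is essentially immediate, since pathwise the clipping is eventually inactive.
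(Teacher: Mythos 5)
Your proof is correct, and it takes a genuinely different organizational route from the paper. The paper decomposes $\e_n$ into four pieces according to which parameters are clipped (none, only $\beta$, $\alpha$, or $c$), argues the first piece vanishes, and applies the dominated convergence theorem separately to each of the remaining pieces, using the bound $\norm{X^n}^2_{\cH_0^1}\le C_\psi (r_n)^{d+8}$ from \cref{lem:boundXN} and dominating $r_n^{d+8}\mathbf{1}_{\abs{\alpha_0^1}>r_n}$ by $\abs{\alpha_0^1}^{d+8}\mathbf{1}_{\abs{\alpha_0^1}>r_n}$. You instead establish pathwise convergence $X^n \to X$ (since the clipping is $\p$-a.s. eventually inactive, requiring, as you note, the implicit hypothesis $r_n\to\infty$) and then construct a single $n$-independent integrable majorant $G$ by observing that each clipping operation either leaves a parameter unchanged, shrinks it, or at worst replaces a power of $\abs{\hat\alpha_0^1}$ by a bounded term plus a power of $\abs{\alpha_0^1}$, so that no factor of $r_n$ survives. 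One DCT application then suffices. Your route is cleaner and somewhat more robust: it handles the lower clip on $\alpha$ transparently (the paper's ``Term 1'' implicitly treats $\{\abs{\alpha_0^1}\le r_n\}$ as the no-clipping regime, overlooking the sub-case $\abs{\alpha_0^1}<1/r_n$ where the lower clip is active), and it sidesteps the $r_n^{d+8}$ factor entirely, so your argument only needs the moments $\E[\abs{\alpha_0^1}^{-d-2}]$, $\E[\abs{\alpha_0^1}]$, $\E[\abs{\beta_0^1}^2]$, $\E[\abs{c_0^1}^2]$ --- all of which follow comfortably from \cref{ass:initial} --- rather than a moment of order $d+8$ on $\alpha_0^1$ and $c_0^1$.
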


\begin{proof}
Let us decompose $\e_n$ as follows, 
\begin{align*}
\e_n = \E \left[ \norm{X^n - X}_{\cH_0^1}^2 \left\{ \boldsymbol{1}_{\abs{\beta_0^1} \le r_n, \abs{\alpha_0^1} \le r_n, \abs{c_0^1}\le r_n} 
    + \boldsymbol{1}_{\abs{\beta_0^1} > r_n, \abs{\alpha_0^1} \le r_n,\abs{c_0^1}\le r_n } 
    + \boldsymbol{1}_{\abs{\alpha_0^1} > r_n, \abs{c_0^1}\le r_n}
    + \boldsymbol{1}_{ \abs{c_0^1}> r_n} \right\} \right],
\end{align*}
and then we treat each summand separately.

\noindent\textit{Term 1.} 
By definition $\norm{X^n - X}_{\cH_0^1}^2 \boldsymbol{1}_{\{ \abs{\beta_0^1} \le r_n, \abs{\alpha_0^1} \le r_n , \abs{c_0^1}\le r_n \}} = 0.$

\noindent\textit{Term 2.}
Let $\abs{\beta_0^1} > r_n$ and $\abs{\alpha_0^1} \le r_n$, then $\norm{X^n}_{\cH_0^1} \le \norm{X}_{\cH_0^1}$. 
Hence,
\[
\E \left[ \norm{X^n - X}_{\cH_0^1}^2 \boldsymbol{1}_{\{ \abs{\beta_0^1} > r_n, \abs{\alpha_0^1} \le r_n,\abs{c_0^1}\le r_n  \}} \right]\le 2\E \left[ \norm{X}_{\cH_0^1}^2 \boldsymbol{1}_{\{ \abs{\beta_0^1} > r_n, \abs{\alpha_0^1} \le r_n ,\abs{c_0^1}\le r_n \}} \right],
\]
which converges to $0$ by the dominated convergence theorem.

\noindent\textit{Terms 3 \& 4.}
The following inequality holds in this case
\begin{align*}
\norm{X^n - X}_{\cH_0^1}^2 & \Big(\boldsymbol{1}_{\{ \abs{\alpha_0^1} > r_n,\abs{c_0^1}\le r_n  \}}+\boldsymbol{1}_{\abs{c_0^1}> r_n}\Big) \\
    &\le  2 \left( \norm{X^n}_{\cH_0^1}^2 + \norm{X}_{\cH_0^1}^2 \right) \Big(\boldsymbol{1}_{\{ \abs{\alpha_0^1} > r_n,\abs{c_0^1}\le r_n  \}}+\boldsymbol{1}_{\abs{c_0^1}> r_n}\Big).
\end{align*}
Using the dominated convergence theorem, we have
\begin{align*}
    \E \left[ \norm{X}_{\cH_0^1}^2 \Big(\boldsymbol{1}_{\{ \abs{\alpha_0^1} > r_n,\abs{c_0^1}\le r_n  \}}+\boldsymbol{1}_{ \abs{c_0^1}> r_n\}}\Big) \right] \xrightarrow[n\to\infty]{} 0.
\end{align*}
Moreover, applying \cref{lem:boundXN}, we arrive at
\begin{align*}
    \norm{X^n}_{\cH_0^1}^2 \Big(\boldsymbol{1}_{\{ \abs{\alpha_0^1} > r_n\}}+\boldsymbol{1}_{\{ \abs{c_0^1} > r_n\}}\Big)
     \le  C_\psi (r_n)^{d+8}\Big(\boldsymbol{1}_{\{ \abs{\alpha_0^1} > r_n\}}+\boldsymbol{1}_{\{ \abs{c_0^1} > r_n\}}\Big) ,
\end{align*}
which converges to $0$ almost surely. 
Therefore, using the dominated convergence theorem once again, we get 
\[
\E \left[ \norm{X^n}_{\cH_0^1}^2 \Big(\boldsymbol{1}_{\{ \abs{\alpha_0^1} > r_n\}}+\boldsymbol{1}_{\{ \abs{c_0^1} > r_n\}}\Big) \right]
    \le \E\Big[\abs{\alpha_0^1}^{d+8}\boldsymbol{1}_{\{ \abs{\alpha_0^1} > r_n\}}+\abs{c_0^1}^{d+8}\boldsymbol{1}_{\{ \abs{c_0^1} > r_n\}}\Big] 
    \xrightarrow[n\to\infty]{} 0. \qedhere
\]
\end{proof}

\begin{lemma}[$\theta$-Lipschitz continuity]
\label{lem:lip_Xtheta}
Let $\theta,\theta'\in \R\times\R\times\R^d$, then $X^n$ is Lipschitz continuous in $\cH_0^1$, \emph{i.e.} there exists a constant $C_\psi>0$ such that
\begin{align*}
    \norm{X^n(\theta) - X^n(\theta')}_{\cH_0^1} \le C_\psi \left(r_n\right)^{4+\frac{d}{2}}\abs{\theta-\theta'}^{\frac{1}{2}}.
\end{align*}
\end{lemma}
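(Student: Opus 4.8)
The plan is to reduce to a Lipschitz estimate on small parameter balls and then carry out, block by block, the same kind of $L^2$-computation as in the proof of \cref{lem:boundXN}. First, if $\abs{\theta-\theta'}>1$, then the triangle inequality together with the bound $\norm{X^n(\theta)}_{\cH_0^1}^2\le C_\psi(r_n)^{d+8}$ of \cref{lem:boundXN} (applied to $\theta$ and to $\theta'$) gives
\[
\norm{X^n(\theta)-X^n(\theta')}_{\cH_0^1}\le 2\sqrt{C_\psi}\,(r_n)^{\frac{d}{2}+4}\le 2\sqrt{C_\psi}\,(r_n)^{\frac{d}{2}+4}\abs{\theta-\theta'}^{\frac12},
\]
since $\abs{\theta-\theta'}^{1/2}\ge1$. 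Hence it suffices to prove the stronger bound $\norm{X^n(\theta)-X^n(\theta')}_{\cH_0^1}\le C_\psi(r_n)^{\frac{d}{2}+4}\abs{\theta-\theta'}$ whenever $\abs{\theta-\theta'}\le1$, because there $\abs{\theta-\theta'}\le\abs{\theta-\theta'}^{1/2}$ and the two cases combine. Moreover, the clipping map $\theta=(\beta,\alpha,c)\mapsto\hat\theta=(\hat\beta,\hat\alpha,\hat c)$ is a composition of projections onto intervals, hence $1$-Lipschitz, so $\abs{\hat\theta-\hat\theta'}\le\abs{\theta-\theta'}$, and the clipped parameters satisfy $\abs{\hat\beta},\abs{\hat c}\le r_n$ and $r_n^{-1}\le\abs{\hat\alpha}\le r_n$; these are the only properties of the parameters that enter below.

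Next, I would expand $X^n(\theta)-X^n(\theta')$ into its three coordinate blocks -- corresponding to the $\beta$-, $\alpha$- and $c$-derivatives of the neuron, i.e.\ $\psi(\hat\alpha x+\hat c)$, $\hat\beta\,x^{\mathsf T}(\nabla\psi)(\hat\alpha x+\hat c)$ and $\hat\beta\,(\nabla\psi)(\hat\alpha x+\hat c)$ -- and into their $x$-gradients, and in each block insert an intermediate term separating the effect of $\hat\beta-\hat\beta'$ from that of the translation/dilation $(\hat\alpha,\hat c)\to(\hat\alpha',\hat c')$, e.g.
\[
\hat\beta\,(\nabla\psi)(\hat\alpha x+\hat c)-\hat\beta'\,(\nabla\psi)(\hat\alpha' x+\hat c')=(\hat\beta-\hat\beta')\,(\nabla\psi)(\hat\alpha x+\hat c)+\hat\beta'\bigl[(\nabla\psi)(\hat\alpha x+\hat c)-(\nabla\psi)(\hat\alpha' x+\hat c')\bigr],
\]
and analogously for the other blocks; taking the $x$-gradient brings in $\nabla\psi$, $D^2\psi$ and one extra factor $\hat\alpha$.

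The substantive part is then a chain of $L^2(\R^d)$-estimates performed exactly as in \cref{lem:boundXN}. Each integrand is a smooth, compactly supported function of the argument $\hat\alpha x+\hat c$ multiplied by a polynomial in $x$, and each stray factor $x$ is absorbed into that argument through $x=\hat\alpha^{-1}(\hat\alpha x+\hat c)-\hat\alpha^{-1}\hat c$; for example $x^{\mathsf T}(\nabla\psi)(\hat\alpha x+\hat c)=\hat\alpha^{-1}\bigl[\Psi(\hat\alpha x+\hat c)-\hat c^{\mathsf T}(\nabla\psi)(\hat\alpha x+\hat c)\bigr]$ with $\Psi(y):=y^{\mathsf T}(\nabla\psi)(y)\in C_c^\infty(\R^d)$, and similarly for the $x$-gradient terms, where the factor $\hat\alpha$ is absorbed using that $(D^2\psi)(y)\,y$ is again smooth and compactly supported in $y$. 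The substitution $y=\hat\alpha x+\hat c$ then contributes a Jacobian $\abs{\hat\alpha}^{-d}\le(r_n)^{d}$; the $L^2$-distance between two translates/dilates of a fixed $\varphi\in C_c^\infty(\R^d)$ is bounded, via the mean value theorem (again absorbing $x$ as above), by a constant depending on $\varphi$ times $\abs{\hat\theta-\hat\theta'}\le\abs{\theta-\theta'}$; and the prefactors contribute at most a few powers of $\abs{\hat\beta}$, $\abs{\hat c}$, $\abs{\hat\alpha}^{-1}$, each $\le r_n$. Collecting these powers shows $\norm{X^n(\theta)-X^n(\theta')}_{\cH_0^1}^2\le C_\psi(r_n)^{d+8}\abs{\theta-\theta'}^2$ -- the same size as the bound $\norm{X^n}_{\cH_0^1}^2\le C_\psi(r_n)^{d+8}$ -- and taking the square root settles the case $\abs{\theta-\theta'}\le1$.

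I expect the main obstacle to be the bookkeeping of the $r_n$-powers: the support of $x\mapsto\psi(\hat\alpha x+\hat c)$ is a ball of radius of order $(r_n)^2$, so estimating $\abs{x}$ pointwise there would overshoot the exponent $\tfrac{d}{2}+4$, and one must instead push every polynomial factor of $x$ inside the argument of a compactly supported smooth function at each step and at each order of differentiation. A secondary delicate point is that the coordinate blocks of $X^n$ also carry indicator factors (such as $\mathbf 1_{\{\abs{\beta}\le r_n\}}$) coming from the derivative of the clipping map, so when $\theta$ and $\theta'$ lie on opposite sides of a clipping threshold the mean value theorem is not directly available for that block, and it has to be handled with extra care, bounding it directly through the $\cH_0^1$-estimates of \cref{lem:boundXN}.
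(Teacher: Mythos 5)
Your route is genuinely different from the paper's, and it runs into trouble precisely at the point you flag as a secondary concern.

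The paper does \emph{not} prove a full Lipschitz bound on a small ball and then upgrade; it goes directly for the H\"older-$\tfrac12$ estimate by a ``half-Lipschitz'' trick. Concretely, it writes
\[
\abs{\psi(\hat\alpha x+\hat c)-\psi(\hat\alpha' x+\hat c')}^2
\le C_\psi\,\abs{(\hat\alpha-\hat\alpha')x+\hat c-\hat c'}\,
\bigl(\abs{\psi(\hat\alpha x+\hat c)}+\abs{\psi(\hat\alpha' x+\hat c')}\bigr),
\]
spending the Lipschitz continuity of $\psi$ on one factor only and keeping a genuine $\psi$-factor on the other to retain compact support, then changes variables $y=\hat\alpha x+\hat c$ (and $y=\hat\alpha'x+\hat c'$) \emph{separately} for each summand. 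That produces exactly one power of $\abs{\theta-\theta'}$ under the squared $L^2$-norm, hence the exponent $\tfrac12$ after the square root, and it never needs the line segment between $\hat\alpha$ and $\hat\alpha'$ to avoid zero. The same pattern is repeated for the $\alpha$- and $c$-blocks, using that $\nabla\psi, D^2\psi$ are Lipschitz with compact support.

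Your proposal instead goes for the stronger $\abs{\theta-\theta'}$ bound when $\abs{\theta-\theta'}\le1$ via a mean value theorem along the segment joining $\hat\theta$ and $\hat\theta'$, and here there are two concrete problems. First, the claim that the clipping map $\theta\mapsto\hat\theta$ is $1$-Lipschitz is false for the $\alpha$-component: the paper's definition sends $\alpha>0$ into $[r_n^{-1},r_n]$ and $\alpha<0$ into $[-r_n,-r_n^{-1}]$, so $\alpha\mapsto\hat\alpha$ has a jump of size $2/r_n$ at $\alpha=0$, and $\abs{\hat\alpha-\hat\alpha'}$ can vastly exceed $\abs{\alpha-\alpha'}$ when $\alpha,\alpha'$ have opposite signs. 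Second, and more decisively for your route, when $\hat\alpha$ and $\hat\alpha'$ have opposite signs the interpolant $\hat\alpha(t)=t\hat\alpha+(1-t)\hat\alpha'$ passes through zero, so the function $x\mapsto(\nabla\psi)(\hat\alpha(t)x+\hat c(t))$ loses compact support at some $t$, and you can no longer push the stray factor of $x$ into a compactly supported argument as you intend. Your case split $\abs{\theta-\theta'}\lessgtr1$ does not protect you from this: parameters with opposite-sign $\alpha$'s can be arbitrarily close (both near $0$). Your suggested fallback --- bounding that block directly through \cref{lem:boundXN} --- yields $\norm{X^n(\theta)-X^n(\theta')}_{\cH_0^1}\le C_\psi(r_n)^{\frac{d}{2}+4}$ \emph{without} a factor of $\abs{\theta-\theta'}^{\frac12}$, so it cannot close the argument.

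In short, your Lipschitz-then-interpolate idea would be a valid and even cleaner strategy if the clipping were a single projection onto a convex set, but the double-sided clipping of $\alpha$ breaks both your $1$-Lipschitz claim and the applicability of the mean value theorem. The paper's half-Lipschitz device sidesteps the second problem entirely; adopting it (i.e., using Lipschitz continuity of $\psi,\nabla\psi,D^2\psi$ once and retaining a compactly supported factor, then changing variables per summand) is the way to repair your proof.
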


\begin{proof}
As in the proof of \cref{lem:boundXN}, we have
\begin{align}
\label{eq:appendix-bound}
\norm{X^n(\theta) - X^n\left( \theta'\right)}_{\cH_0^1} 
    &\le \norm{\psi \left( \hat\alpha \cdot + \hat c \right)-\psi \left( \hat\alpha' \cdot + \hat c'\right)}_{\cH_0^1} \nonumber \\ \nonumber
    &\qquad  +\norm{\hat\beta  \left( \cdot\nabla \psi \right) \left( \hat\alpha \cdot + \hat c \right)-\hat\beta' \cdot \left( \nabla \psi \right) \left( \hat\alpha' \cdot + \hat c' \right)}_{\cH_0^1} \\
    &\qquad + \norm{\hat\beta \left( \nabla \psi \right) \left( \hat\alpha \cdot + \hat c \right)-\hat\beta' \left( \nabla \psi \right) \left( \hat\alpha' \cdot + \hat c' \right)}_{\cH_0^1}.
\end{align}
Let us recall that $\abs{\hat \alpha}, |\hat\beta|,\abs{\hat c}\le r_n$ and $\abs{\hat \alpha}\ge r_n^{-1}$. 
Using that $\psi \in C_c^{\infty} \left( \mathbb{R}^d \right)$, and therefore Lipschitz, we get
\begin{align*}
\int_{\R^d} \abs{\psi \left( \hat\alpha x + \hat c \right)-\psi \left( \hat\alpha' x + \hat c'\right)}^2\ud x 
    & \le \int_{\R^d} C_\psi \abs{\left(\hat\alpha - \hat \alpha' \right) x + \hat c - \hat c'} \abs{\psi \left( \hat\alpha x + \hat c \right) - \psi \left( \hat\alpha' x + \hat c'\right)} \ud x \\
    & \le C_\psi \abs{\theta-\theta'} \int_{\R^d} \left( 1 + \abs{x} \right) \left( \abs{\psi \left( \hat\alpha x + \hat c \right)}+ \abs{\psi \left( \hat\alpha' x + \hat c'\right)} \right) \ud x.
\end{align*}
Using the change of variables $y = \hat \alpha x + \hat c$, we arrive at
\begin{align*}
    \int_{\R^d} \left( 1 + \abs{x} \right) \abs{\psi \left( \hat\alpha x + \hat c \right)} \ud x & = \abs{\hat \alpha}^{-d} \int_{\R^d} \abs{\psi (y)} \ud y + \abs{\hat \alpha}^{-d-1} \int_{\R^d} \abs{y - \hat c} \abs{\psi \left( y \right)} dy \\
    & \le C_\psi \Big( r_n^{d} + r_n^{d+2} \Big),
\end{align*}
and we obtain the bound
\[
    \int_{\R^d} \abs{\psi \left( \hat\alpha x + \hat c \right)-\psi \left( \hat\alpha' x + \hat c'\right)}^2\ud x \leq C_\psi \abs{\theta-\theta'} r_n^{d+2}.
\]
Analogously, using that $\nabla \psi$ is Lipschitz as well, we have that
\begin{multline*}
\int_{\R^d} \abs{\hat \alpha\nabla\psi\left( \hat\alpha x + \hat c \right)-\hat \alpha'\nabla\psi \left( \hat\alpha' x + \hat c'\right)}^2\ud x \\
     \le 2\int_{\R^d} \abs{\hat\alpha-\hat \alpha'}^2\abs{\nabla\psi \left( \hat\alpha x + \hat c \right)}^2\ud x + 2\int_{\R^d}\abs{\hat \alpha'}^2\abs{\nabla\psi \left( \hat\alpha x + \hat c \right)-\nabla\psi \left( \hat\alpha' x + \hat c'\right)}^2\ud x \\
      \le 4 r_n \abs{\theta - \theta'} \int_{\R^d} \abs{\nabla\psi \left( \hat\alpha x + \hat c \right)}^2\ud x + 2 r_n^2 \int_{\R^d} \abs{\nabla\psi \left( \hat\alpha x + \hat c \right)-\nabla\psi \left( \hat\alpha' x + \hat c'\right)}^2\ud x \\
     \le  C_\psi \abs{\theta-\theta'}(r_n)^{d+4}.
\end{multline*}
Therefore, we arrive at the following bound for the $\cH_0^1$-norm of this term
\[
    \norm{\psi \left( \hat\alpha \cdot + \hat c \right)-\psi \left( \hat\alpha' \cdot + \hat c'\right)}_{\cH_0^1}\le C_\psi \abs{\theta-\theta'}^{1/2}(r_n)^{d/2+2}.
\]
We can similarly estimate the other two terms in \eqref{eq:appendix-bound}, and we get that
\begin{align*}
&\norm{\hat\beta  \left( \cdot\nabla \psi \right) \left( \hat\alpha \cdot + \hat c \right)-\hat\beta' \cdot \left( \nabla \psi \right) \left( \hat\alpha' \cdot + \hat c' \right)}_{\cH_0^1} \\
    &\qquad + \norm{\hat\beta \left( \nabla \psi \right) \left( \hat\alpha \cdot + \hat c \right)-\hat\beta' \left( \nabla \psi \right) \left( \hat\alpha' \cdot + \hat c' \right)}_{\cH_0^1} 
    \le  C_\psi \abs{\theta-\theta'}^{1/2}(r_n)^{d/2+4}
\end{align*}
Concluding, we have
\[
    \norm{X^n(\theta) - X^n\left( \theta'\right)}_{\cH_0^1} \le C_\psi \left(r_n\right)^{4+\frac{d}{2}}\abs{\theta-\theta'}^{\frac{1}{2}}. \qedhere
\]
\end{proof}

\begin{lemma}
\label{lem:thetat0}
Assume that the neural network satisfies \cref{ass:initial}.
Then, for $t\ge0$, we have
\begin{align*}
    \E\left[ \abs{\theta_t^{i, n} - \theta_0^{i, n}}\right] \le C_\psi tn^{\delta-1}\left(r_n\right)^{\frac{d}{2}+4}.
\end{align*} 
\end{lemma}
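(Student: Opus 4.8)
The plan is to integrate the gradient‑descent dynamics \eqref{eq:dytheta} coordinate‑wise and then bound the resulting integrand by the auxiliary estimates already at our disposal. First I would unwind the chain rule: writing $V_t^n(x) = n^{-\delta}\sum_{j=1}^n \hat\beta^{j,n}_t\psi\big(\hat\alpha^{j,n}_t x + \hat c^{j,n}_t\big)$ and differentiating with respect to the (unclipped) parameter block $\theta^{i,n} = (\beta^{i,n},\alpha^{i,n},c^{i,n})$, only the $j=i$ summand contributes, so $\nabla_{\theta^i} V_t^n = n^{-\delta} X_t^{i,n}$, where $X_t^{i,n} = \mathcal X^n(\theta_t^{i,n};\cdot)$ already absorbs the bounded indicator factors produced by the clipping map. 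Since the learning rate is $\eta_n = n^{2\delta-1}$, the dynamics \eqref{eq:dytheta} read, for each coordinate,
\[
\frac{\ud \theta_t^{i,n}}{\ud t} = -\eta_n\ip{\D I^k\left(V_t^n\right),\nabla_{\theta^i}V_t^n}_{\cH_0^1} = -\,n^{\delta-1}\ip{\D I^k\left(V_t^n\right),X_t^{i,n}}_{\cH_0^1},
\]
so that the powers of $n$ coming from the normalization of the network and from the learning rate combine to leave only the single factor $n^{\delta-1}$ (which is consistent with the $1/n$ normalization of the kernel $Z_t^n$ in \eqref{eq:dynamic_VN}).

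Next I would pass to integral form, estimate $\abs{\theta_t^{i,n}-\theta_0^{i,n}} \le n^{\delta-1}\int_0^t \abs{\ip{\D I^k(V_s^n),X_s^{i,n}}_{\cH_0^1}}\,\ud s$, and bound the integrand by applying \cref{lem:con_frechet} with $w=0$, which gives
\[
\abs{\ip{\D I^k\left(V_s^n\right),X_s^{i,n}}_{\cH_0^1}} \le K\big(1+\norm{V_s^n}_{\cH_0^1}\big)\norm{X_s^{i,n}}_{\cH_0^1},
\]
followed by \cref{lem:boundXN}, which yields $\norm{X_s^{i,n}}_{\cH_0^1}\le C_\psi (r_n)^{d/2+4}$, uniformly in $s$ and $i$. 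This produces the bound
\[
\abs{\theta_t^{i,n}-\theta_0^{i,n}} \le C_\psi\, n^{\delta-1}(r_n)^{d/2+4}\int_0^t\big(1+\norm{V_s^n}_{\cH_0^1}\big)\,\ud s.
\]

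Finally I would take expectations, exchange expectation and the time integral by Tonelli, and control $\E\big[\norm{V_s^n}_{\cH_0^1}\big]$ through Jensen's inequality and \cref{lem:EVtN}, namely $\E\big[\norm{V_s^n}_{\cH_0^1}\big]\le \E\big[\norm{V_s^n}_{\cH_0^1}^2\big]^{1/2}\le C_\psi$ uniformly in $s$; this gives $\E\big[\abs{\theta_t^{i,n}-\theta_0^{i,n}}\big]\le C_\psi\, t\, n^{\delta-1}(r_n)^{d/2+4}$, as claimed, with a constant independent of $i$. There is no real obstacle in this argument — it is a direct integration, not even requiring Grönwall — and the only points that demand a little care are the bookkeeping of the powers of $n$ (checking $\eta_n\cdot n^{-\delta}=n^{\delta-1}$) and the observation that the derivatives of the clipped parameters contribute only bounded indicators, so that \cref{lem:boundXN} applies verbatim.
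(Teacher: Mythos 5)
Your proof is correct and uses the same ingredients as the paper (\cref{lem:con_frechet}, \cref{lem:boundXN}, \cref{lem:EVtN}, and the identity $\nabla_{\theta^i}V^n_t = n^{-\delta}X_t^{i,n}$). The only difference is that the paper first controls $\E[|\theta_t^{i,n}-\theta_0^{i,n}|^2]$ via Cauchy--Schwarz in time and then applies Jensen, whereas you work directly with the first moment; this is a minor streamlining of the same argument, not a different approach.
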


\begin{remark}
This lemma yields that, when $n$ is large, then the value $\theta_t^{i,n}$ of the evolution of the parameters of the neural network does not differ significantly from its initial value $\theta_0^{i,n}$.
\end{remark}

\begin{proof}
Recalling \eqref{eq:dytheta}, we have 
\begin{align*}
\theta_t^{i, n} - \theta_0^{i,n} 
    = - \eta_n \int_0^t \ip{\D I^k \left( V_s^n \right), \nabla _{\theta^i} V_s^n}_{\cH_0^1} \ud s,
\end{align*}
thus, their squared difference equals
\begin{align*}
 \abs{\theta_t^{i, n} - \theta_0^{i, n}}^2 = & \abs{\eta_n}^2 \left| \int_0^t \ip{\D I^k \left( V_s^n \right), \nabla _{\theta^i} V_s^n}_{\cH_0^1} \ud s \right|^2 \leq n^{4 \delta - 2} t\int_0^t  \ip{\D I^k \left( V_s^n \right), \nabla _{\theta^i} V_s^n}^2_{\cH_0^1}  \ud s .
\end{align*}
Using \cref{{lem:con_frechet}}, we get
\begin{align*}
\left| \ip{\D I^k \left( V_s^n \right), \nabla _{\theta^i} V_s^n}_{\cH_0^1} \right|^2 
    \leq K \left( 1+\norm{V^n_s}^2_{\cH_0^1} \right) \norm{\nabla _{\theta^i} V_s^n}^2_{\cH_0^1 } .
\end{align*}
Using that
\[
\left| \nabla _{\theta^i} V^n \left( \theta^{n}_t; x \right) \right| = \frac{1}{n^{\delta}} \abs{\mathcal X^n\left(\theta^{i,n}_t;x\right)}, 
\]
and applying \cref{lem:boundXN}, we get
\[
    \norm{\nabla _{\theta^i} V_t^n}^2_{\cH_0^1 } \le \frac{C_\psi} {n^{2 \delta}} \left(r_n\right)^{d+8}.
\]
Moreover, from \cref{lem:EVtN}, we have
\[
    \E \left[ \norm{V^n_t}^2_{\cH_0^1 } \right] \le C_\psi.
\]
Hence, we can bound the norm of the square difference by
\[
    \E\left[ \abs{\theta_t^{i, n} - \theta_0^{i, n}}^2 \right] 
        \le C_\psi t n^{2\left(\delta - 1\right)} \left(r_n\right)^{ d+8} \int_0^t \E \left[ \norm{V^n_s}^2_{\cH_0^1 } +1\right] \ud s 
        \le C_\psi t^2 n^{2\left(\delta-1\right)}\left(r_n\right)^{d + 8}.
\]
The result follows now using Jensen's inequality.
\end{proof}

\subsection{Examples}
\label{appendix:examples}

This final appendix contains certain details in order to verify that the option pricing PDEs of \cref{ex:option,ex:op-PIDE} indeed satisfy \cref{ass:bound,ass:ellip,ass:Lip,ass:SA-PD}.

Starting from the Black--Scholes PDE of \cref{ex:option}, we have that 
\begin{align*}
\mathcal{L}u  = - \frac{\sigma^2}{2} \frac{\partial^2 u}{\partial x^2} + r u 
    \quad \text{ and } \quad
F(u) = \left( \frac{\sigma^2}{2} - r \right) \frac{\partial u}{\partial x},
\end{align*}
and the energy functional takes the form
\[
I^k(u) 
    = \frac{1}{2} \norm{u - U^{k-1}}^2_{L^2} + \frac{h}{2} \int_{\R} \Big\{ \frac{\sigma^2}{2} \left( \frac{\partial u}{\partial x} \right)^2 + r u^2 \Big\}\ud x + h \int_{\R} F \left( U^{k-1} \right) u \ud x.
\]
\cref{ass:SA-PD} is obviously satisfied.
Using the triangle and the Cauchy--Schwarz inequalities, following some straightforward calculations, we arrive at
\begin{align*}
\abs{\ip{\cL  u,  v}_{\cH^{-1},\cH_0^1}} 
     \leq \left(\abs{\frac{\sigma^2}{2}} + \abs{r} \right) \norm{u}_{\cH_0^1} \norm{v}_{\cH_0^1} 
    \quad \text{ and } \quad
\norm{F(u)}_{L^2} 
     \leq \abs{\frac{\sigma^2}{2} - r} \norm{u}_{\cH_0^1}.
\end{align*}
Hence, \cref{ass:bound} is satisfied with $M = \abs{\frac{\sigma^2}{2}} + \abs{r}$.
Moreover, we have that
\[
\ip{\cL  u,  u}_{\cH^{-1},\cH_0^1} 
    \geq \left( \frac{\sigma^2}{2} + r \right) \norm{u}_{\cH_0^1}^2,
\]
therefore \cref{ass:ellip} is satisfied with $\lambda_1 = \frac{\sigma^2}{2} + r > 0$ and $\lambda_2=0$.
\cref{ass:Lip} follows by similar computations.

Turning our attention to the multi-dimensional Merton model of \cref{ex:op-PIDE}, we only need to show that the function $F$, which now contains the integro-differential operator stemming from the jumps of the dynamics, still satisfies \cref{ass:bound}.
Let us start with the integral operator, denoted by $F_\nu$, then we have
\begin{align*}
\norm{F_\nu (u) }_{L^2}^2
    &\le  \int_{\R^d} \abs{ \lambda \int_{\R^d} \left(u\left(x\ue^z\right)-u\left(x\right) \right)\nu\left(\ud z\right)}^2\ud x \\
    &\le 2 \lambda \int_{\R^d}\int_{\R^d} \abs{u\left(x\ue^z\right)}^2\nu\left(\ud z\right)\ud x + 2 \lambda \int_{\R^d} \abs{u\left(x\right)}^2\ud x\\
    &= \frac{2\lambda}{(\sqrt{2\pi})^d}\int_{\R^d}\int_{\R^d} \abs{u\left(x\ue^z\right)}^2\ue^{-\frac{z^2}{2}}\ud z\ud x+2\lambda\norm{u}_{L^2}^2\\
    &= \frac{2\lambda}{(\sqrt{2\pi})^d}\int_{\R^d}\int_{\R^d} \abs{u\left(x\ue^z\right)}^2\ue^{-\frac{z^2}{2}}\ud x\ud z + 2\lambda \norm{u}_{L^2}^2\\
    &= \frac{2\lambda}{(\sqrt{2\pi})^d}\int_{\R^d}\int_{\R^d} \abs{u\left(y\right)}^2\ue^{\frac{-2z-z^2}{2}}\ud y\ud z + 2\lambda\norm{u}_{L^2}^2\\
    &= 2\lambda \left(\ue+1\right)\norm{u}_{L^2}^2
     \le 2\lambda \left(\ue+1\right)\norm{u}_{\cH_0^1 }^2,
\end{align*}
where we have used the properties of the normal distribution, Fubini's theorem for the fourth step, and the change of variables $y=x\ue^z$ for the fifth step.
Therefore,
\begin{align*}
\norm{F(u)}_{L^2}
    \le \abs{\boldsymbol{b}}\norm{\nabla u}_{L^2} +\lambda \norm{F_\nu (u)}_{L^2} 
    \le 2\lambda\left(C+\ue+1\right)\norm{u}_{\cH_0^1 },
\end{align*}
which implies the result.

\bibliographystyle{abbrvnat}
\bibliography{references}


\end{document}